\newcommand{\I}[1]{{\mathbbm #1}}
\newtheorem{theorem}{Theorem}[section]
\newtheorem*{claim*}{Claim}
\newtheorem*{theorem*}{Theorem}
\newtheorem*{definition*}{Definition}
\newtheorem{corollary}[theorem]{Corollary}
\newtheorem{lemma}[theorem]{Lemma}
\newtheorem{claim}[theorem]{Claim}
\newtheorem{proposition}[theorem]{Proposition}
\newtheorem{example}[theorem]{Example}
\newtheorem{definition}[theorem]{Definition}
\newtheorem{question}[theorem]{Question}
\newcommand{\comm}[1]{}
\crefname{theorem}{Theorem}{Theorems}
\crefname{proposition}{Proposition}{Propositions}
\crefname{observation}{Observation}{Observations}
\crefname{lemma}{Lemma}{Lemmas}
\crefname{claim}{Claim}{Claims}
\crefname{problem}{Problem}{Problems}
\crefname{conjecture}{Conjecture}{Conjectures}
\crefname{question}{Question}{Questions}
\crefname{example}{Example}{Examples}
\crefname{fact}{Fact}{Facts}
\newcommand{\fH}{\mathcal{H}}
\newcommand{\fJ}{\mathcal{J}}
\newcommand{\fO}{\mathcal{O}}
\newcommand{\fP}{\mathcal{P}}
\def\leukfrac#1/#2{\leavevmode
               \kern.1em
                \raise.9ex\hbox{\the\scriptfont0 ${}_#1$}
                \hskip -1pt\kern-.1em
                /\kern-.15em\lower.10ex\hbox{\the\scriptfont0 ${}_#2$}}
\def\diam{\mathop{\operator@font diam}\nolimits}
\begin{document}
\title{Borel equivalence relations induced by actions of tsi Polish groups}

\author{Jan Grebík \\ \small University of Warwick\\
\small \texttt{jan.grebik@warwick.ac.uk}}

\maketitle

\begin{abstract}
    We study Borel equivalence relations induced by Borel actions of tsi Polish groups on standard Borel spaces.
    We characterize when such an equivalence relation admits \emph{classification by countable structures} using a variant of the $\I G_0$-dichotomy.
    In particular, we find a class that serves as a base for non-classification by countable structures for these equivalence relations under Borel reducibility.
    We use this characterization together with the result of Miller \cite{MillerLacunary} to show that if such an equivalence relation admits classification by countable structures but it is not \emph{essentially countable}, then the equivalence relation ${\I E}^{\I N}_0=\I E_3$ Borel reduces to it.
\end{abstract}

\section{Introduction}

In this paper we study complexity of \emph{Borel} equivalence relations induced by Borel actions of \emph{tsi} Polish groups on standard Borel spaces.
This can be seen as part of the program that studies the formalization of the \emph{isomorphism problem}.
The abstract framework for the study of the isomorphism problem is provided by the so-called \emph{invariant descriptive set theory}.
Namely, the class of structures that we want to study is naturally encoded into a Polish topological space $X$ and the isomorphism relation translates to a definable equivalence relation on $X$.
The main notion that allows to compare various isomorphism problems throughout mathematics is called \emph{Borel reducibility}.
This is just an abstract version of the well-known strategy of assigning invariants to structures in order to distinguish them in various settings.
A classical examples include: (a) topological spaces and fundamental groups or (b) Bernoulli shifts and entropy.

During several decades of study, benchmark examples of equivalence relations and their relationships were discovered, see the books \cite{Gao,Kanovei} for a summary.
A particularly important examples are so-called \emph{orbit equivalence relations}.
That is, equivalence relations induced by group actions.
One line of research is to understand all possible complexities of equivalence relations that are induced by actions of groups from a given class.
In this paper we focus on tsi Polish groups and their actions.
Recall that a Polish group is \emph{tsi} if it admits \emph{conjugacy invariant} open base at the identity.
It is well-known that the condition is equivalent with existence of a two-sided invariant compatible metric.
This class includes separable Banach spaces, or, in general, commutative Polish groups.
Recently, tsi Polish groups attracted quite a lot of attention from various perspectives \cite{DingGao,MillerLacunary,AllisonPanagiotopoulos,Allison}.
In this paper, we study possible complexities of equivalence relations induced by actions of tsi Polish groups.
In particular, we focus on the situation when they admit \emph{classification by countable structures} or are \emph{essentially countable}.
Both notions are well studied.
The former was studied by Hjorth \cite{HjorthBook}, the theory of \emph{turbulence} provides a dynamical obstacle for a continuous action of a Polish group to be classifiable by countable structures.
The latter stems from the work of Kechris \cite{KechrisLacunary}, is ultimately connected with the theory of \emph{countable Borel equivalnce relations} \cite{KechrisCBER}, and some recent progress shows that this notion is equivalent to a geometrical notion of $\sigma$-lacunarity \cite{GrebikLacunary} introduced and characterized by Miller in \cite{MillerLacunary}.

Our main contribution to the study of Borel equivalence relations induced by actions of tsi Polish groups is two-fold.
First, we characterize classification by countable structures with a variant of the $\I G_0$-dichotomy and provide a basis for non-classification by countable structures using a natural class of equivalence relations tightly connected to \emph{lsc submeasures} and \emph{$c_0$-equalities}, see \cite{Farah1,Farah2} and \cite[Chapters~3 and~15]{Kanovei}.
Second, we show that if such an equivalence relation admits classification by countable structures but is not essentially countable, then there is a Borel reduction from a canonical such equivalence relation, $\I E_0 ^{\I N}$.
Next, we formulate our results and describe high-level ideas.


\subsection{High-level overview of the arguments}

Recall that we denote as $E^X_G$ the orbit equivalence relation that is induced by a Borel action $G\curvearrowright X$ of a Polish group $G$ on a standard Borel space $X$, that is,
$$(x,y)\in E^X_G \ \Leftrightarrow \ \exists g\in G \ g\cdot x=y$$
for every $x,y\in X$.
The main technical tool that we use in this paper are variants of the \emph{$\I G_0$-dichotomy} of Kechris, Solecki and Todorcevic \cite{KST}.
Recall that the $\I G_0$-dichotomy characterizes Borel graphs of Borel chromatic number at most $\aleph_0$.
That is, a Borel graph either admits a decomposition into at most countably many independent sets or a Borel homomorphism from the graph $\I G_0$, a canonical example of graph that does not admit such a decomposition.
The elementary proof of this dichotomy is due to Miller \cite{MillerElementaryG0}.

Our approach is profoundly influenced by \cite{KechrisLacunary,HjorthDichotomy,HjorthBook,MillerLacunary}.
In particular, the results of Miller \cite{MillerLacunary} are not only literally used as a part of the proof of our main result but the ideas are internally present throughout the paper.

The starting goal of this project, also suggested by Miller, was to understand Hjorth's $\I E_2$-dichotomy \cite{HjorthDichotomy}.
Recall that the equivalence relation $\I E_2$ is induced by the canonical action of the Banach space $\ell_1$ on $\I R^{\I N}$ and that this action is turbulent, i.e., $\I E_2$ does not admit classification by countable structures.
Hjorth's $\I E_2$-dichotomy states that a restriction of $\I E_2$ to any Borel subset of $\I R^{\I N}$ is either essentially countable or Borel bi-reducible with $\I E_2$.
To prove this result one can use a variant of the $\I G_0$-dichotomy as follows.
The dichotomy is applied to a family of oriented hypergraphs $\fH_{V,W}$, where for every open neighborhoods $W\subseteq V$ of the identity in $G$ we define an oriented hypergraph $\fH_{V,W}$ on $X$ by declaring $x\in X^{<\aleph_0}$ to be an edge if the consecutive elements are connected by elements in $W$ and the first and last element of $x$ are not connected by any element of $V$.
Fixing $V$ and applying a variant of the $\I G_0$-dichotomy to a decreasing sequence of open neighborhoods of $1_G$ gives either a homomorphism from a canonical $\I G_0$-like object or a decomposition into $\fH_{V,W}$-invariant sets.
In the case of homomorphism one can use a \emph{refinement} technique, see \cite{MillerLacunary}, to find a reduction from $\I E_2$.
In the other case, it is not hard to see that the corresponding decomposition is made of sets that intersect each orbit in bounded and separated islands (or galaxies, aka grainy sets \cite[Chapter~15.2]{Kanovei}).
Formally, we make the following definition, see \cref{sec:PropIC}.
\begin{definition}[Property (IC)]
We say that $E^X_G$ satisfies \emph{Property (IC)} if for every open neighborhood $V$ of $1_G$ there is a sequence of Borel sets $(A_{l})_{l\in\I N}$ such that
\begin{itemize}
    \item for every $l\in \I N$ there is an open neighborhood $W_l$ of $1_G$ such that $A_{l}$ is $\fH_{V,W_l}$-independent,
    \item $X=\bigcup_{l\in \I N} A_{l}$.
\end{itemize}
\end{definition}
To understand why we derive essential countability in this case, it might be illustrative to make a detour and discuss Kechris' result \cite{KechrisLacunary} that says that every equivalence relation induced by a Borel action of a \emph{locally compact} Polish group is essentially countable.
As a first step in the original proof Kechris basically shows that every such action satisfies Property (IC).
The reason why we can conclude in both examples that the equivalence relation is essentially countable is a combination of Property (IC) and the following notion.
We say that an action $G\curvearrowright X$ admits a \emph{$G$-bounded topology} if there is a compatible Polish topology $\tau$ on $X$ that makes the action continuous\footnote{Compatible topology that makes the action continuous is called a \emph{$G$-Polish topology}.} and such that for every vertex $x\in X$ there is an open neighborhood $\Delta$ of $1_G$ such that acting with group elements from $\Delta$ on $x$ does not approximate elements from different orbit, i.e., the $\tau$-closure of $\Delta\cdot x$ is a subset of the orbit of $x$.
It is easy to see that if $G$ is compact and the action is continuous, then the topology is $G$-bounded.
Also the canonical topology on $\I R^{\I N}$ is $\ell_1$-bounded for the canonical action of the Banach space $\ell_1$.
The proof of Hjorth's theorem and Kechris' theorem is then finished by our first result
\begin{theorem}\label{thm:PropICandGbounded}
Suppose that a Borel equivalence relation $E^X_G$ satisfies Property (IC).
Then $E^X_G$ is essentially countable if and only if it admits a $G$-bounded topology. 
\end{theorem}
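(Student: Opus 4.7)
The plan is to prove the two implications separately; both rely on the combinatorial information supplied by Property~(IC), combined with standard change-of-topology tools for Polish group actions.

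\medskip

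The implication ``$G$-bounded topology $\Longrightarrow$ essentially countable'' proceeds as follows. Fix a compatible $G$-Polish topology $\tau$ witnessing $G$-boundedness and a decreasing base $(V_n)_{n \in \I N}$ of symmetric open neighborhoods of $1_G$. After a Borel uniformization, cover $X$ by Borel sets $B_n$ such that for every $x \in B_n$ one has $\overline{V_n \cdot x}^{\tau} \subseteq [x]_E$, so that $V_n$ itself plays the role of the neighborhood $\Delta$ on $B_n$. Apply Property~(IC) with $V = V_n$ to obtain a Borel decomposition $X = \bigcup_{l} A_{n,l}$ in which each $A_{n,l}$ is $\fH_{V_n, W_{n,l}}$-independent, and set $C_{n,l} := A_{n,l} \cap B_n$. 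The independence condition forces every $W_{n,l}$-connectivity component of $C_{n,l} \cap [x]_E$ to sit in a single $V_n$-ball around one of its points, hence inside $V_n \cdot x$, whose $\tau$-closure is contained in the orbit. Since $X$ is second-countable, this $\tau$-closure is $\tau$-separable, and one concludes that at most countably many such $W_{n,l}$-components meet any fixed orbit. A Lusin-Novikov selection then produces a Borel map sending each $x$ to a canonical point of its component, with countable image on each orbit; this is the desired Borel reduction to a countable Borel equivalence relation.

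\medskip

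For the converse implication ``essentially countable $\Longrightarrow$ $G$-bounded topology,'' invoke Miller's characterization of essential countability via $\sigma$-lacunarity \cite{MillerLacunary}: there exist a Borel decomposition $X = \bigsqcup_n X_n$ and open neighborhoods $U_n$ of $1_G$ such that distinct $E$-equivalent points of $X_n$ are not $U_n$-translates of one another. Using the Becker-Kechris change-of-topology machinery, refine the original topology into a finer compatible $G$-Polish topology $\tau'$ in which every $X_n$ is clopen. For $x \in X_n$, pick a symmetric open $\Delta$ with $\Delta \cdot \Delta \subseteq U_n$; because each $X_m$ is $\tau'$-clopen and lacunarity forbids two distinct $E$-equivalent points of $X_m$ from being $U_m$-translates of each other, any $\tau'$-convergent sequence $(g_k \cdot x)$ with $g_k \in \Delta$ must eventually stabilize in a single $X_m$ and collapse to a point of $[x]_E$. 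A Borel-uniform choice of $\Delta$ across the pieces $X_n$, together with a further refinement of $\tau'$ absorbing the Borel reduction into a countable Borel equivalence relation, yields a $G$-bounded topology.

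\medskip

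The main obstacle is the Borel-uniform selection of the neighborhood $\Delta_x$: neither $G$-boundedness nor $\sigma$-lacunarity supplies a canonical witness depending Borel-measurably on $x$, and reconciling such witnesses with the Property~(IC) decomposition without violating Borel measurability requires delicate bookkeeping. It is precisely here that Property~(IC) — furnishing a countable family of Borel $\fH_{V,W}$-independent sets rather than a single uniform witness — plays the decisive role: it allows one to piece together countably many local descriptions and globalize them into either a Borel reduction to a countable Borel equivalence relation, or into a compatible topology in which small group-balls have orbit-closures, that is, a $G$-bounded topology.
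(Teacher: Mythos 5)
Your outline of the forward implication (bounded topology plus Property (IC) gives essential countability) is broadly in the spirit of the paper's argument — both build countable Borel complete sections out of the $W$-connectivity components of the independent pieces and their $\tau$-closures — but it skips over two points where the real work lies. First, the set $\{x : \overline{V_n\cdot x}^\tau\subseteq [x]_{E^X_G}\}$ is only \emph{co-analytic}, not Borel (compare the paper's \cref{cl:co-analytic}); there is no immediate "Borel uniformization" producing the sets $B_n$, and in fact the paper deliberately avoids ever having to Borelize this set. It instead applies the selector $S$ to \emph{every} closure $\overline{[x]_{F_{k,x(k)}}}^\tau$, and only at the very end (step \textbf{(V)}) does it observe that for each $x$ \emph{some} $k$ lands inside the orbit. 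Second, the $W_{n,l}$-connectivity equivalence relation $F_{k,l}$ is not obviously Borel: you cannot simply feed it into Lusin--Novikov. The paper restricts to a carefully chosen Borel $G$-lg comeager set $C$ (via \cref{lm:rel open}) and then proves Borelness of $F_{k,l}$ by an inductive saturation argument with category quantifiers (the relations $R^i$ and \cref{cl:arifical}). The selector is then a Kuratowski--Ryll-Nardzewski selector for $\tau$-closed sets applied to $K_k(x)=\overline{[x]_{F_{k,x(k)}}}^\tau$, whose Borelness also requires a category argument on $C$. Without the passage to a $G$-lg comeager set, none of these measurability claims is available.

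The converse direction in your proposal has a genuine mathematical gap. You invoke $\sigma$-lacunarity to get a Borel partition $X=\bigsqcup_n X_n$ with neighborhoods $U_n$ such that distinct $E$-equivalent points of $X_n$ are not $U_n$-translates, and then claim that after making the $X_n$ clopen, a $\tau'$-convergent sequence $(g_k\cdot x)$ with $g_k\in\Delta$ ``must collapse to a point of $[x]_E$.'' This does not follow. Lacunarity gives a separation measured by group elements (pairs in the same piece are not $U_m$-translates), not a separation in the topology of $X$; there is nothing preventing a sequence of pairwise $U_m$-inequivalent points of $X_m\cap [x]_E$ from $\tau'$-converging to a point outside $[x]_E$. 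Moreover, the eventual stabilizing piece $X_m$ need not equal $X_n$, so the choice $\Delta\cdot\Delta\subseteq U_n$ yields no control over $U_m$. The paper handles this direction entirely differently: it proves (\cref{th:bounded downward closed}) that admitting a bounded $G$-Polish topology is downward closed under Borel reducibility, and then applies this to a Borel reduction of $E^X_G$ to a countable Borel equivalence relation. That proof in turn rests on the three technical lemmas of \cref{app:technical} — \cref{lm:local continuity} (local continuity of a homomorphism on a $G$-lg comeager set), \cref{lm:global continuity} (continuity on a $G$-lg comeager set after refining the topology), and \cref{lm:weak boundedness} (upgrading boundedness on a $G$-lg comeager set to full $G$-boundedness) — none of which appears in your proposal. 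Incidentally, your opening claim that both implications rely on Property (IC) is incorrect: the paper's proof of the converse (\cref{cor:EC implies bounded}) uses only Borel reducibility to a countable Borel equivalence relation and is valid for arbitrary Polish $G$; Property (IC) is used only in the forward direction.
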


A natural question is to understand in which situation we can apply the aforementioned variant of the $\I G_0$-dichotomy for the oriented hypergraphs $\fH_{V,W}$ and get similar results.
It is not difficult to show that if we have a decomposition into $\fH_{V,W}$-independent sets, then the action is not turbulent (this result hold for any Polish group).
We obtain the following result.

\begin{theorem}\label{thm:CCS=IC}
Suppose that $E^X_G$ is a Borel equivalence relation induced by a Borel action of a \emph{tsi Polish group} $G$, that is, $G$ admits a two-sided invariant compatible metric.
Then the following are equivalent
\begin{itemize}
	\item $X$ satisfies Property (IC),
	\item $E^X_G$ admits classification by countable structures.
\end{itemize}
\end{theorem}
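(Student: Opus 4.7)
The plan is to prove both implications of the theorem through variants of the $\I G_0$-dichotomy applied to the oriented hypergraphs $\fH_{V,W}$ introduced in the overview above.

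For the direction \emph{classifiability $\Rightarrow$ (IC)}, I argue contrapositively. Suppose (IC) fails: there is an open neighborhood $V$ of $1_G$ such that for no sequence $(W_l)_{l\in\I N}$ of open neighborhoods of $1_G$ does $X$ admit a Borel decomposition into pieces $A_l$ with $A_l$ being $\fH_{V,W_l}$-independent. I would apply a $\I G_0$-type dichotomy to the parametric family $\{\fH_{V,W}\}_W$, where $W$ ranges over a decreasing basis at $1_G$, producing a Borel homomorphism from a canonical $\I G_0$-like test object into the whole family. A refinement argument in the spirit of Miller \cite{MillerLacunary} then promotes this homomorphism to a Borel reduction from one of the benchmark equivalence relations in the non-classification base class flagged in the introduction (those attached to lsc submeasures and $c_0$-equalities). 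Since such equivalence relations do not admit classification by countable structures, neither would $E^X_G$, contradicting the hypothesis.

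For the direction \emph{(IC) $\Rightarrow$ classifiability}, I would use the tsi hypothesis to construct a countable-structure invariant directly. Fix a two-sided invariant compatible metric on $G$ and let $(V_n)_{n\in\I N}$ be the associated decreasing basis of symmetric conjugation-invariant open neighborhoods with $V_{n+1}\cdot V_{n+1}\subseteq V_n$. For each $n$, Property (IC) supplies a Borel partition $(A_{n,l})_{l\in\I N}$ of $X$ and open neighborhoods $W_{n,l}$ of $1_G$ such that $A_{n,l}$ is $\fH_{V_n,W_{n,l}}$-independent; set $c_n(x)=l$ whenever $x\in A_{n,l}$. Define a Borel equivalence subrelation $E_n\subseteq E^X_G$ by declaring $x E_n y$ iff $c_n(x)=c_n(y)=l$ and $x,y$ are linked by a finite $E^X_G$-chain inside $A_{n,l}$ whose consecutive terms differ by group elements of $W_{n,l}$. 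Independence of $A_{n,l}$ forces any two $E_n$-related points to differ by a group element of $V_n$, so $E_n$-classes are $V_n$-bounded and shrink as $n\to\infty$. Assign to each $x$ the rooted countable labeled tree $T(x)$ whose level-$n$ vertices are the $E_n$-classes meeting $[x]_{E^X_G}$, with edges given by class inclusion and labels recording the $c_n$-values along branches. The map $x\mapsto T(x)$ is Borel and sends $E^X_G$-equivalent points to isomorphic trees, so it remains to check that this is a Borel reduction to isomorphism of countable structures.

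The main obstacle sits in this last step of the second implication: showing that $T(x)\cong T(y)$ forces $x E^X_G y$. The tree encodes the position of the orbit within a hierarchy of $V_n$-bounded cells, and converting a tree-isomorphism into a single group element realizing the orbit equivalence requires a Cauchy-type construction that assembles approximate group elements, one per level $n$, with errors controlled by the $V_n$'s. Two-sided invariance of the metric is crucial both for the errors to compose correctly and for the candidate limit element to lie in $G$ rather than in its completion, and one must verify that the labels recorded by $T(\cdot)$ are rich enough to pin down the orbit uniquely (this may require enlarging the labels beyond just $c_n$-values, for instance by recording how $E_{n+1}$-classes sit inside the ambient $E_n$-class). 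The contrapositive direction is comparatively routine once the correct non-classifiable base equivalence relation is identified, a combinatorial task best handled by the parallel machinery developed elsewhere in the paper.
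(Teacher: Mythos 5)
Your contrapositive argument for the implication ``classifiable $\Rightarrow$ (IC)'' matches the paper's route in spirit: apply a dihypergraph $\I G_0$-dichotomy to the family $(\fH_{k,m})_m$, extract a continuous homomorphism when (IC) fails, then refine to a reduction from a member of the non-classification base class $\mathfrak{B}$ (lsc submeasures / $c_0$-equalities). The refinement is the bulk of the paper's work (\cref{lm:refinement} plus the uniform Borel pseudometric analysis in \cref{sec:pseuodmetric}), and you correctly flag it as the hard combinatorial part, so no objection there.

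The implication ``(IC) $\Rightarrow$ classifiable'' is where your plan departs from the paper, and where it has a genuine gap beyond the one you acknowledge. The paper does not build a countable structure invariant directly: it shows that Property (IC) rules out turbulence (\cref{th:turbulence vs IC}) and then invokes a known dichotomy (Hjorth/Kechris, cited as \cite[Theorem~13.18]{KechrisTurbulence}, with an ``elementary'' alternative deferred to \cite{GrebikPhD}). Your proposed labeled tree $T(x)$ has two structural problems. First, the relations $E_n$ live inside the pieces $A_{n,l}$ for different $n$, and the $E_{n+1}$-classes need not refine the $E_n$-classes, so ``edges given by class inclusion'' does not automatically produce a tree; you would need an additional coherence step to nest the partitions. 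Second, and more seriously, even granting the tree structure, the invariant you describe records only the combinatorial shape of the decomposition of the orbit into $V_n$-bounded cells together with the $c_n$-labels. Two distinct orbits can easily have isomorphic such trees: nothing in $T(x)$ remembers \emph{which} group elements connect the cells, only that they are $V_n$-small, and nothing pins down the orbit inside $X$. So the implication ``$T(x)\cong T(y)\Rightarrow x\,E^X_G\,y$'' is false as stated, not merely unverified. In the non-archimedean case, where the $V_n$ are clopen normal subgroups and one has genuine quotient actions $G/V_n\curvearrowright X/V_n$, this style of invariant does succeed because the cells are cosets and the quotients carry enough algebraic information; for general tsi groups the $V_n$ are not subgroups and that structure is lost. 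You would need to enrich $T(x)$ with data about the relative position of cells under the group action (and, as you anticipate, run a Cauchy argument to recover an element of $G$), at which point you are essentially re-proving the turbulence-style dichotomy rather than replacing it. The paper's indirection through non-turbulence is not just a shortcut; it is what avoids having to design such an invariant by hand.
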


The reason why Property (IC) implies classification by countable structures is intuitively clear, the corresponding Borel decomposition into $\fH_{V,W}$-independent sets mimics the behavior of actions of \emph{non-archimedean} groups.
For this class Property (IC) holds trivially, and it is well-known that the corresponding orbit equivalence relations admit classification by countable structures.
The other implication is more challenging.
By a variant of the $\I G_0$-dichotomy we get a homomorphism from some canonical object.
We use heavily the assumption that the group is tsi to do several refinements.
Ultimately we obtain a Borel reduction from a Borel equivalence relation that is closely connected to \emph{tall lsc submeasures}, or \emph{$c_0$-equalities}, \cite{Farah1,Farah2,Kanovei}.
These are known to be induced by turbulent actions and this can be used to show that $E^X_G$ cannot be classifiable by countable structures.
Even though we are not able not show that \emph{tall lsc submeasures} and \emph{$c_0$-equalities} form a base for non-classification by countable structures, our result serves as an indication that this might be the case for equivalence relations that are induced by actions of tsi Polish groups.
For general Borel equivalence relations, in particular, the ones induced by actions of general Polish groups, we do not have any intuition.

Next we turn our attention to our main result.
We use results of Miller \cite{MillerLacunary} to show that $\I E_0^{\I N}$ is a canonical obstacle for essential countability under the assumption that $E^X_G$ admits classification by countable structures.
In the case of \emph{non-archimedean} tsi Polish groups, the result was proved by Hjorth and Kechris \cite{HjorthKechris}.
Recently Miller found a proof that uses a variant of the $\I G_0$-dichotomy \cite{MillerLacunary}.
We manage to weaken the assumption to merely tsi Polish groups but we need to keep the assumption that the equivalence relation admits classification by countable structures.
Note that without this assumption the situation is more complicated, e.g., $\I E_2$ is not essentially countable but there is no reduction from $\I E_0^{\I N}$ to $\I E_2$.
In general, there is no reduction from $\I E_0^{\I N}$ if the corresponding action admits $G$-bounded topology.

\begin{theorem}\label{thm:E_3dichotomy}
Suppose that $E^X_G$ is a Borel equivalence relation that admits classification by countable structures and is induced by a Borel action of a \emph{tsi Polish group} $G$, that is, $G$ admits a two-sided invariant compatible metric.
Then the following are equivalent:
\begin{itemize}
	\item $E^X_G$ is essentially countable,
	\item the action admits $G$-bounded topology,
	\item $\I E_0^{\I N}\not\le_B E^X_G$.
\end{itemize}
\end{theorem}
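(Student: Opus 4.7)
The plan is to organize the three-way equivalence as $(1) \Leftrightarrow (2)$ together with $(1) \Rightarrow (3) \Rightarrow (2)$, concentrating the new content in the last implication. Two of the three pieces follow for free from the earlier theorems. The hypothesis that $E^X_G$ admits classification by countable structures activates \cref{thm:CCS=IC} to deliver Property (IC), after which \cref{thm:PropICandGbounded} immediately yields $(1) \Leftrightarrow (2)$. The implication $(1) \Rightarrow (3)$ is classical: essential countability is downward closed under Borel reducibility, while $\I E_0^{\I N}$ is well known to fail essential countability, so no Borel reduction $\I E_0^{\I N} \le_B E^X_G$ can exist.

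For the remaining implication $(3) \Rightarrow (2)$, equivalently the contrapositive ``no $G$-bounded topology implies $\I E_0^{\I N} \le_B E^X_G$'', my plan is to feed the scale-by-scale combinatorial data from Property (IC) into Miller's machinery of \cite{MillerLacunary}. Using the tsi hypothesis, fix a two-sided invariant compatible metric $d$ on $G$ together with a decreasing sequence of open symmetric identity neighborhoods $V_n$ with $\bigcap_n V_n = \{1_G\}$. Property (IC) produces, for each $V_n$, a Borel partition $X = \bigcup_l A_{n,l}$ into $\fH_{V_n,W_{n,l}}$-independent ``islands''. These islands are precisely the grainy, $\sigma$-lacunary witnesses analyzed by Miller: when they can be coherently aligned across $n$, they assemble into a $G$-bounded topology as in \cref{thm:PropICandGbounded}; when they cannot, a variant of the $\I G_0$-dichotomy applied to a natural hypergraph encoding the failure produces a Borel homomorphism from a canonical tree-like object whose induced equivalence relation is $\I E_0^{\I N}$.

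The main obstacle is upgrading this homomorphism into an honest Borel reduction. Here the tsi hypothesis carries real weight: two-sided invariance of $d$ ensures that a perturbation at scale $V_n$ does not disturb combinatorics at coarser scales $V_m$ for $m < n$, so the tower of island refinements can be realized as a single coherent Borel map $f \colon \I N^{\I N} \to X$. The delicate step will be verifying injectivity on $\I E_0^{\I N}$-classes---that $\I E_0^{\I N}$-inequivalent inputs land in distinct $E^X_G$-classes---which I expect to be forced precisely by the assumed failure of a $G$-bounded topology, via an argument of Baire-category flavor inside a single orbit that prevents the nested neighborhoods $V_n \cdot f(x)$ from accumulating on elements of other orbits. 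Once this is checked, the reduction $\I E_0^{\I N} \le_B E^X_G$ is complete and the cycle closes.
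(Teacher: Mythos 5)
Your high-level organization is sound and agrees with the paper's in the easy directions: $(1) \Leftrightarrow (2)$ follows from combining \cref{thm:CCS=IC} (to get Property~(IC) from CCS) with \cref{thm:PropICandGbounded}, and $(1) \Rightarrow (3)$ is the standard downward-closure observation. The paper instead traverses $(1)\Rightarrow(2)\Rightarrow(3)\Rightarrow(1)$, proving $(2)\Rightarrow(3)$ via \cref{cor:bounded implies no E_3}, but that is a cosmetic difference.

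The genuine gap is in your sketch of the hard direction. You phrase the dichotomy as ``when the islands from Property~(IC) cannot be coherently aligned, a $\I G_0$-dichotomy produces a homomorphism from a canonical tree-like object whose induced equivalence relation is $\I E_0^{\I N}$.'' This misidentifies both what the dichotomy detects and what it outputs. The relevant dichotomy (Miller's \cref{thm:G_0Ben}) is calibrated to $\sigma$-lacunarity, i.e.\ essential countability, not to alignment of the IC decomposition; under the CCS hypothesis the IC decomposition always exists, and the dichotomy is applied to the digraphs $G_{i,j} = R^X_{V_i}\setminus R^X_{V_j}$ to witness the failure of condition~(1). Moreover, its output is a continuous homomorphism $\varphi_0\colon 2^{\I N}\to X$ from $(\I G_{0,k})_k$ to $(G_{k,f(k)})_k$, not anything that already carries $\I E_0^{\I N}$. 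The entire burden of the argument is the upgrade from this homomorphism to an actual reduction, and that is where your proposal stops being an argument. Concretely, you are missing the two refinement lemmas: \cref{lm:first ref}, which uses the conjugacy invariance of the tsi metric to convert $\varphi_0$ into a homomorphism from $(\I G_s)_{s\in 2^{<\I N}}$ to $(G_{k_{|s|},k_{|s|}+1})_{s}$ (this is the precise place where tsi ``carries real weight,'' not a loose scale-separation heuristic), and \cref{lm:second ref}, which exploits the IC decomposition to arrange that each subcube of appropriate depth maps into a single $\fH_{k,m(k)}$-independent set. Only after both refinements can one follow Miller's construction of $\psi$ and invoke \cite[Lemma~3.6]{MillerLacunary} together with the meagerness verification of \cite[Lemma~4.5]{MillerLacunary} to get the reduction from $\I E_0^{\I N}$. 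Your appeal to ``a Baire-category flavor inside a single orbit'' gestures in the right direction for the last of these checks, but without the intermediate refinements there is no map to which it can be applied, so as written the argument does not close.
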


The strategy for proving this result is to use the $\I G_0$-dichotomy \emph{two times}.
First, by \cref{thm:CCS=IC} we get that Property (IC) holds, i.e., there is a Borel decomposition that mimics the behavior of non-archimedean groups.
Second, we use the result of Miller \cite{MillerLacunary} who found a variant of the $\I G_0$-dichotomy that characterizes $\sigma$-lacunarity (a formal strengthening of essential countability).
If we get a Borel decomposition into independent sets in his result, then we conclude that the action is $\sigma$-lacunary, i.e., essentially countable.
In the other case we get a homomorphism from some canonical object.
Miller was able to refine the homomorphism to get a reduction from $\I E_0^{\I N}$ under the assumption that the group is non-archimedean.
We show that his argument goes through under the weaker assumption of Property (IC).

\section*{Acknowledgement}
The author is indebted to Ben D. Miller for introducing him into the topic during his AKTION stay at KGRC in Vienna in 2017.
Also he would like to thank Ben D. Miller and Zoltán Vidnyászky for many engaging discussions and Ilijas Farah for useful suggestions. 
The research was supported by Leverhulme Research Project Grant RPG-2018-424 and by the GACR project 17-33849L and RVO: 67985840.

\section{Preliminaries}

\newcommand{\diag}{\operatorname{diag}}

For a set $X$ we write $X^{< \I N}$ for the set of all finite sequences of $X$.
Let $x\in X^{< \I N}$.
We define $|x|\in \I N$ to be the length of $x$ and write $x_i$ for the $i$-th element of $x$ for every $i<|x|$.
That is $x_0$ is the first element and $x_{|x|-1}$ is the last element of $x$ in this notation.
We set
$$\diag_X=\left\{x\in X^{<\I N}:\exists i\not= j< |x| \ x_i=x_j\right\}\cup X\cup \{\emptyset\}.$$
A \emph{relation $R$} on $X$ is any subset of $X^{<\I N}$.
A relation $\fH$ is a \emph{(finite-dimensional) dihypergraph} on $X$ if $\fH\cap \diag_X=\emptyset$ and it is a \emph{digraph} if $\fH\subseteq X^2$.
If $\mathcal{H}$ is a dihypergraph (or digraph) on $X$ and $A\subseteq X$, then we say that $A$ is $\mathcal{H}$-independent if the \emph{restriction} of $\fH$ to $A$, in symbols
$$\fH \upharpoonright A=\mathcal{H}\cap A^{<\I N},$$
is empty.

Let $X,Y$ be sets and $\varphi:X\to Y$ be a map.
The coordinate-wise extension $\overline{\varphi}$ of $\varphi$ to $X^{<\I N}$ is defined as
$$\overline{\varphi}(x)_i=\varphi(x_i)$$
for every $x\in X^{<\I N}$ and $i<|x|$.
We abuse the notation and write $\varphi$ instead of $\overline{\varphi}$.
Suppose that we have collections $(R_j)_{j\in I}$ and $(S_j)_{j\in I}$ of relations on $X$ and $Y$, respectively, where $I$ is some index set.
We say that a map $\varphi:X\to Y$ is a {\it homomorphism from $(R_j)_{j\in I}$ to $(S_j)_{j\in I}$} if 
$$x\in R_j \ \Rightarrow \ \varphi(x)\in S_j$$
for every $x\in X^{<\I N}$ and $j\in I$.
Moreover, it is a \emph{reduction} if we have
$$x\in R_j \ \Leftrightarrow \ \varphi(x)\in S_j$$
for every $x\in X^{<\I N}$ and $j\in I$.

\subsection{Polish $G$-space}

A topological space $X$ is a \emph{Polish space} if the underlying topology is separable and completely metrizable.
A \emph{standard Borel space} $X$ is a set endowed with a $\sigma$-algebra that is a $\sigma$-algebra of Borel sets for some Polish topology on $X$.
We call such a Polish topology \emph{compatible}.
A topological group $G$ is a \emph{Polish} group if the underlying topology is Polish.
We denote the \emph{$\sigma$-ideal of meager sets} on $G$ as $\mathcal{M}_G$.
We use the category quantifiers $\exists^*$, $\forall^*$ in the standard meaning, e.g.,
$$\forall^* g\in U \ P(g) \ \Leftrightarrow \ \{g\in U: \neg P(g)\}\in \mathcal{M}_G$$
$$\exists^* g\in U \ P(g) \ \Leftrightarrow \ \{g\in U: P(g)\}\not\in \mathcal{M}_G$$
where $U\subseteq G$ is an open set and $P$ is some property, see \cite{Gao}.

A \emph{Borel action} $G\curvearrowright X$ of a Polish group $G$ on a standard Borel space $X$ is an action that is additionally Borel measurable as a function from $G\times X$ to $X$.
We write $(g,x)\mapsto g\cdot x$ for the evaluation of the action at particular elements $g\in G$ and $x\in X$.
Similarly, we define $V\cdot x$ for any $V\subseteq G$ and $x\in X$.
We denote as $E^X_G$ the induced equivalence relation and as $[x]_{E^X_G}$ the equivalence class, or orbit, of $x\in X$.
A set $A\subseteq X$ is \emph{$G$-invariant} if it is a union of equivalence classes of $E^X_G$.
If $V\subseteq G$, then we set $(x,y)\in R^X_V$ if and only if $y\in V\cdot x$.
It is a result of Becker and Kechris \cite{BeckerKechris} that one can always find a compatible Polish topology on $X$ such that the action is continuous.
Any such Polish topology is called a \emph{$G$-Polish topology}.
If such a topology is fixed we say that $G\curvearrowright X$ is a \emph{Polish $G$-space}.
For $x\in X$ and $A\subseteq X$ we set $G(x,A)=\{g\in G:g\cdot x\in A\}$.

\begin{definition}
Let $X$ be a Polish $G$-space.
We say that $C\subseteq X$ is a \emph{$G$-lg comeager set} if $G\setminus G(x,C)\in \mathcal{M}_G$ for every $x\in X$.
Equivalently,
$$\forall^*g\in G \ g\cdot x\in C$$
holds for every $x\in X$
\end{definition}

\subsection{Borel reducibility}

A \emph{Borel equivalence relation} $E$ on a standard Borel space $X$ is is an equivalence relation that is additionally a Borel subset of $X\times X$.
We assume throughout the paper that the orbit equivalence relations of the form $E^X_G$ that we consider are always Borel equivalence relations.
A Borel equivalence relation $E$ on $X$ is \emph{Borel reducible} to a Borel equivalence $F$ on $Y$, in symbols $E\le_B F$, if there is a Borel map $\varphi:X\to Y$ that is a reduction from $E$ to $F$.

We say that a Borel equivalence relation $E$ is \emph{essentially countable} if it is Borel reducible to some \emph{countable Borel equivalence relation}, see \cite{KechrisCBER}.\footnote{A Borel equivalence relation is countable if the cardinality of each equivalence class is at most countable.}
In our setting, that is Borel equivalence relations inudeced by Polish group actions, this is equivalent to $\sigma$-lacunarity \cite{GrebikLacunary}.

We say that a Borel equivalence relation $E$ is \emph{classifiable by countable structures} if it is Borel reducible to some equivalence relation induced by a Borel action of $S_\infty$, the permutation group of $\I N$.
There are several other equivalent characterizations of classification by countable structures \cite[Theorem~12.3.3]{Kanovei} or \cite{Gao}.

The benchmark examples of Borel equivalence relations that we consider in this paper are $\I E_0$, $\I E_2$ and $\I E_0^{\I N}$.
The equivalence relation $\I E_0$ on $2^{\I N}$ is defined as $(x,y)\in \I E_0$ if and only if $|\{n\in \I N:x(n)\not=y(n)\}|<\aleph_0$.
The equivalence relation $\I E_2$ is defined in the Introduction.
The equivalence relation $\I E_0^{\I N}$ is the countable product of $\I E_0$, that is, $\I E_0^{\I N}$ is the equivalence relation on $2^{\I N\times\I N}$ defined as $(x,y)\in \I E_0^{\I N}$ if and only if $|\{m\in \I N:x(n,m)\not=y(n,m)\}|<\aleph_0$ holds for every $n\in \I N$.
We refer the reader to \cite{Kanovei} for more information about these particular examples.

\subsection{Turbulence}

Let $X$ be a Polish $G$-space, $V\subseteq G$, $U\subseteq X$ and $x\in X$.
We introduce some notation that is connected to the definition of local orbit, see~\cite[Section~10.2]{Gao}.
First, we define
$$\fJ(V)=\{x\in X^{<\I N}\setminus \diag_X:\forall i<|x|-1 \ (x_i,x_{i+1})\in R^X_V\},$$
the set of all $V$-jumps.
Let $\fJ(x,V)=\left\{y\in \fJ(V):y_0=x\right\}$.
Assuming now that $U$ and $V$ are open neighborhoods of $x\in X$ and the identity $1_G\in G$, respectively, we define the local orbit 
$$\fO(x,U,V)=\left\{y_{|y|-1}\in U:y\in \fJ(x,V)\cap U^{<\I N}\right\}.$$
That is, $\fO(x,U,V)$ are those elements of $U$ that are reachable from $x$ by $V$-jumps within $U$.

\begin{definition}[Section~10 \cite{Gao}]
Let $G$ be a Polish group and $X$ be a Polish $G$-space.
We say that the action $G\curvearrowright X$ is \emph{turbulent} if
\begin{itemize}
    \item every equivalence class of $E^X_G$ is dense and meager in $X$,
    \item the local orbit $\fO(x,U,V)$ is somewhere dense for every $x\in X$ and every open neighborhoods $U$ and $V$ of $x\in X$ and $1_G\in G$, respectively. 
\end{itemize}
\end{definition}

Let $E$ be an equivalence relation on a Polish space $X$.
We say that $E$ is \emph{generically $S_\infty$-ergodic} if for every Polish $S_\infty$-space $Y$ and every Baire measurable homomorphism $\varphi: X \to Y$ from $E$ to $E^Y_{S_\infty}$ there is $y\in Y$ such that $\varphi^{-1}\left([y]_{E^Y_{S_\infty}}\right)$ is comeager in $X$.

\begin{theorem}[Corollary 10.4.3 \cite{Gao}]\label{thm:GenErg}
Let $G$ be a Polish group and $X$ be a Polish $G$-space.
Suppose that the action $G\curvearrowright X$ is turbulent.
Then $E^X_{G}$ is generically $S_\infty$-ergodic.
In particular, $E^X_G$ is not classifiable by countable structures.
\end{theorem}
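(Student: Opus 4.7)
The plan is to adapt Hjorth's classical proof. I first dispose of the ``in particular'' clause from the main statement: if $\varphi\colon X\to Y$ were a Borel reduction from $E^X_G$ to a Borel action $S_\infty\curvearrowright Y$, then by generic $S_\infty$-ergodicity $\varphi$ would send some comeager $C\subseteq X$ into a single $S_\infty$-orbit, forcing all of $C$ into one $E^X_G$-class; but turbulence forces every orbit to be meager, so a comeager set meets uncountably many orbits, a contradiction. Thus only generic $S_\infty$-ergodicity requires proof.

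Given a Baire measurable homomorphism $\varphi$ from $E^X_G$ to $E^Y_{S_\infty}$, I first pass to a dense $G_\delta$ invariant set on which $\varphi$ is continuous (standard Baire-measurability trick), and replace $Y$ by the canonical Polish $S_\infty$-space $\mathrm{Mod}_L$ of countable $L$-structures under the logic action, for a countable relational language $L$; this is possible because every Polish $S_\infty$-space is Borel isomorphic to such a model space. For each $L_{\omega_1\omega}$-formula $\phi(\bar v)$ and tuple $\bar n\in\I N^{<\I N}$ of matching length, the set $A_{\phi,\bar n}=\{x\in X:\varphi(x)\models\phi[\bar n]\}$ is Borel, hence has the Baire property, and I say $(\phi,\bar n)$ is \emph{large} if $A_{\phi,\bar n}$ is comeager.

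The crux is to show that the family of large formulas is a complete consistent $L_{\omega_1\omega}$-theory realized by a single countable structure $M^*$, and that $\varphi^{-1}([M^*]_{E^Y_{S_\infty}})$ is comeager. Completeness requires that $A_{\phi,\bar n}$ is \emph{always} either comeager or meager: otherwise, by the Baire property, it would be comeager in some nonempty open $U_1$ and meager in some nonempty open $U_2$. Using that every orbit is dense and the local orbit $\fO(x,U,W)$ is somewhere dense for arbitrarily small $W\ni 1_G$, I pump a generic $x\in U_1$ along a finite chain of $W$-jumps to a generic $x'\in U_2$ inside the same $E^X_G$-class. Since $\varphi$ is a homomorphism and $\{M\models\phi[\bar n]\}$ is $S_\infty$-invariant, $x\in A_{\phi,\bar n}\Leftrightarrow x'\in A_{\phi,\bar n}$, contradicting the asymmetry. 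The analogous pumping handles the existential quantifier step: if $\exists v\,\phi(v,\bar n)$ is large then some $A_{\phi(m,\bar n)}$ must be nonmeager, hence comeager, for some $m\in\I N$. A standard back-and-forth then builds $M^*$ realizing exactly the large formulas, and a Kuratowski--Ulam style argument on the consistent sequence of comeager sets shows $\varphi^{-1}([M^*]_{E^Y_{S_\infty}})$ is comeager.

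The step I expect to be hardest is the rigorous pumping argument connecting $U_1$ to $U_2$ along a local orbit while preserving Baire-categorically generic behavior: one must simultaneously control (i) continuity of $\varphi$ at each step, (ii) the density and meagerness of $E^X_G$-orbits, and (iii) the Baire property of $A_{\phi,\bar n}$ as transported by the shift $y\mapsto g\cdot y$ for $g$ ranging in a small neighborhood of $1_G$. Once established, consistency of the large theory follows by induction on quantifier rank, realization of $M^*$ is the classical consequence for countable $L_{\omega_1\omega}$, and generic $S_\infty$-ergodicity follows.
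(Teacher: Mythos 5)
The paper does not prove this statement; it is cited directly from Gao's book (Corollary~10.4.3, built on Theorem~10.4.2, which is Hjorth's turbulence theorem). So there is no in-paper proof to compare against, and you are effectively reconstructing Hjorth's argument. Your reduction of the ``in particular'' clause to generic $S_\infty$-ergodicity is correct, and the overall architecture (pass to $\mathrm{Mod}_L$, continuity on a comeager invariant set, isolate a ``generic'' model $M^*$, show its preimage is comeager) is the classical one.

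However, there is a genuine gap at the crux of your 0--1 law step. You assert that ``$\{M\models\phi[\bar n]\}$ is $S_\infty$-invariant'' and use this to conclude $x\in A_{\phi,\bar n}\Leftrightarrow x'\in A_{\phi,\bar n}$ along the pumping chain. This is false for a formula with free variables evaluated at a fixed tuple $\bar n$: under the logic action, if $\sigma\in S_\infty$ and $M\models\phi[\bar n]$, then $\sigma\cdot M\models\phi[\sigma(\bar n)]$, so $\{M\models\phi[\bar n]\}$ is invariant only under the pointwise stabilizer of $\bar n$, a proper open subgroup. Consequently, when you move $x$ to $x'$ in the same $E^X_G$-class, the permutation $\sigma$ witnessing $\varphi(x')=\sigma\cdot\varphi(x)$ can displace $\bar n$, and $A_{\phi,\bar n}$ need not be preserved. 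This is exactly why Hjorth's proof (and Gao's exposition) does not argue formula-by-formula; instead it tracks a tuple together with the point $x$, building a rank/Scott-type analysis on the pairs $(x,\bar n)$ and using turbulence combined with the continuity of $\varphi$ and the fact that the pointwise stabilizer of $\bar n$ is open in $S_\infty$ to control how the tuple is carried along small $G$-moves. Restricting your argument to sentences (so the $S_\infty$-invariance is genuine) would not rescue it, since the Henkin/back-and-forth step to realize the generic theory as an actual countable structure requires witnesses and hence formulas with free variables, reintroducing the same obstruction. You correctly flagged the pumping argument as the hard step, but the sketch as written does not surmount the non-invariance issue, which is precisely where the real work lies.
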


\subsection{Tsi Polish groups}

A Polish group $G$ is \emph{tsi} (states for two-sided invariant) if there is an open basis at $1_G$ made of conjugacy invariant open sets.
That is, there is a sequence $(\Delta_k)_{k\in \I N}$ of open neighborhoods of $1_G$ that is an open base and such that $g\cdot \Delta_k \cdot g^{-1}=\Delta_k$ for every $g\in G$ and $k\in \I N$. 
Equivalently, see~\cite[Exercise~2.1.4]{Gao}, there is a compatible metric $d$ on $G$ that is two sided invariant, i.e., $d(g,h)=d(h^{-1}\cdot g,1_G)=d(g\cdot h^{-1},1_G)$ for every $g,h\in G$.
It follows from \cite[Exercise~2.2.4]{Gao} that such a metric $d$ is necessarily complete.
We always assume that such a metric $d$ on $G$ is fixed and put $\Delta_\epsilon=\{g\in G:d(g,1_G)<\epsilon\}$.
We have $g\cdot \Delta_\epsilon \cdot g^{-1}=\Delta_\epsilon$ for every $\epsilon>0$ and $g\in G$.
We abuse the notation and define $\Delta_k=\Delta_{\frac{1}{2^k}}$ for every $k\in \I N$.
Note that $(\Delta_k)_{k\in \I N}$ is a conjugacy invariant open base at $1_G$ such that $\Delta_{k+1}\cdot \Delta_{k+1}\subseteq \Delta_k$ and $\Delta_k=\Delta_k^{-1}$ for every $k\in \I N$.

We define the dihypergraphs that we use in this paper.

\begin{definition}\label{def:dihyp}
Let $X$ be a Polish $G$-space and $k,m\in \I N$.
We set
$$\fH_{k,m}=\left\{x\in X^{<\I N}:x\in \fJ(\Delta_m) \ \wedge \ (x_0,x_{|x|-1})\not\in R^X_{\Delta_k}\right\}.$$
We note that the definition makes sense for any Polish group $G$ and any sequence of (symmetric) neighborhoods of $1_G$.
\end{definition}

\subsection{$\I G_0$-dichotomy}\label{subsec:G_0}

We formulate three versions of the $\I G_0$-dichotomy.
First version is the original dichotomy and the other two are the versions that we use in this paper.
We formulate the statements in bigger generality as it is done in \cite{MillerLectureNotes}.

For $s\in 2^{<\I N}$ define the graph
$$\I G_s=\{(s^\frown (0)^\frown c,s^\frown (1)^\frown c):c\in 2^\I N\}$$
on $2^\I N$.
Fix some dense collection $(s_n)_{n\in \I N}\subseteq 2^{<\I N}$ such that $|s_n|=n$, i.e., $s_n\in 2^n$, for every $n\in \I N$.
Here \emph{dense} means that for every $s\in 2^{<\I N}$ there is $n\in \I N$ such that $s\sqsubseteq s_n$.
Set $\I G_0=\bigcup_{n\in \I N}\I G_{s_n}$.

\begin{theorem}[$\I G_0$-dichotomy \cite{KST}]
Suppose that $X$ is a Hausdorff space and $G$ is an analytic graph on $X$.
Then exactly one of the following holds:
\begin{enumerate}
    \item there is a sequence $(B_k)_{k\in \I N}$ of Borel subsets of $X$ such that $X=\bigcup_{k\in \I N}B_k$ and $B_k$ is $G$-independent for every $k\in \I N$,
    \item there is a continuous homomorphism $\varphi:2^{\I N}\to X$ from $\I G_0$ to $G$.
\end{enumerate}
\end{theorem}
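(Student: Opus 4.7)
The plan is to first establish mutual exclusivity via Baire category and then prove the main implication by combining a Cantor--Bendixson-style derivative with a Cantor scheme construction. For mutual exclusivity, assume both alternatives (1) and (2) hold: let $\varphi$ witness (2) and $(B_k)$ the cover in (1). Pulling back along $\varphi$ yields a Borel cover of $2^{\I N}$, so by Baire category some $\varphi^{-1}(B_k)$ is comeager in a basic clopen neighborhood $N_s$. By density of $(s_n)$, fix $n$ with $s \sqsubseteq s_n$. The shift maps $c \mapsto s_n^\frown (i)^\frown c$ are homeomorphisms $2^{\I N} \to N_{s_n^\frown (i)} \subseteq N_s$ for $i \in \{0,1\}$, so each of the sets $\{c \in 2^{\I N} : s_n^\frown (i)^\frown c \in \varphi^{-1}(B_k)\}$ is comeager in $2^{\I N}$; any $c$ in their intersection produces a $\I G_0$-edge whose $\varphi$-image is a $G$-edge sitting inside $B_k$, contradicting $G$-independence.

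For the main direction, the first step is to refine the Polish topology on $X$ so that the analytic graph $G$ becomes closed; this is legitimate because alternative (1) depends only on the Borel structure and continuous maps into the refined topology remain continuous into the original one. Next, one carries out a derivative: set $X_0 = X$, let $V_\alpha \subseteq X_\alpha$ be the union of all relatively open $G$-independent subsets of $X_\alpha$, and define $X_{\alpha+1} = X_\alpha \setminus V_\alpha$ and $X_\lambda = \bigcap_{\alpha < \lambda} X_\alpha$ at limits. Second countability ensures that each $V_\alpha$ is itself a countable union of $G$-independent relatively open Borel subsets of $X_\alpha$, and the decreasing sequence of closed sets $(X_\alpha)$ stabilizes at a countable ordinal $\gamma$ by a Cantor--Bendixson-type argument. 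If the kernel $Y := X_\gamma$ is empty then $X$ is covered by countably many $G$-independent Borel sets, giving alternative (1).

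Otherwise $Y$ is a nonempty Polish space in which every nonempty relatively open subset contains a $G$-edge. On $Y$ one recursively builds a Cantor scheme of nonempty relatively open sets $(V_s)_{s \in 2^{< \I N}}$ with $\overline{V_{s^\frown (i)}} \subseteq V_s$ and $\operatorname{diam}(V_s) \to 0$, arranged so that for every $n \in \I N$ and every $t \in 2^{< \I N}$ the product $V_{s_n^\frown (0)^\frown t} \times V_{s_n^\frown (1)^\frown t}$ meets $G$. The induced continuous map $\varphi : 2^{\I N} \to Y \subseteq X$ sending $x$ to the unique limit of $\overline{V_{x \upharpoonright m}}$ is then a continuous homomorphism from $\I G_0$ to $G$: for any $\I G_0$-edge $(s_k^\frown (0)^\frown c, s_k^\frown (1)^\frown c)$ the edge-witnesses at each level converge, and closedness of $G$ delivers the required $G$-edge in the limit.

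The principal technical obstacle is the recursive step of the Cantor scheme. At stage $n+1$ one must simultaneously (a) introduce a fresh $G$-edge inside $V_{s_n^\frown (0)} \times V_{s_n^\frown (1)}$ and (b) for each $k < n$ and each $t \in 2^{n-k}$, ensure a $G$-edge inside the further-refined product $V_{s_k^\frown (0)^\frown t} \times V_{s_k^\frown (1)^\frown t}$. Viewing the passage from $2^n$ to $2^{n+1}$ as a doubling along a new last coordinate, one picks a witnessing edge $(y_0,y_1) \in G$ inside $V_{s_n}$ (available by nonemptiness of the kernel), and then shrinks every $V_s$ simultaneously so that the old edge structure is replicated along the new coordinate and the fresh edge sits in the appropriate cell. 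Making the shrinking compatible with both the diameter constraint and closedness of $G$---so that limits of edge-witnesses remain edges---is the technical heart of Miller's elementary argument.
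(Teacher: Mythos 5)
The paper states this theorem as a citation to \cite{KST} without proof, so there is no argument in the paper to compare against; the following addresses your proposal on its own terms. Your mutual-exclusivity argument via Baire category is correct, and the derivative-plus-Cantor-scheme structure for the main direction is the right shape \emph{for a closed graph}. The gap is the opening claim that one can refine the Polish topology on $X$ so that the analytic graph $G$ becomes closed. Your justification is that alternative (1) depends only on the Borel structure, which forces the refinement to preserve the Borel $\sigma$-algebra; but then any set that is closed in the refined topology is Borel there and hence Borel in the original topology, so for a non-Borel analytic $G$ the reduction is simply impossible. The theorem is asserted for all analytic graphs, so this is a genuine obstruction, not a cosmetic one. (Even for Borel $G$ one would need $G$ closed in the \emph{product} topology on $X\times X$ coming from a refinement of $X$, which is far stronger than making a Borel subset of a single Polish space clopen and is not always achievable; and the hypothesis is only that $X$ is Hausdorff, so the Cantor--Bendixson step silently strengthens the hypotheses as well.)

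The place where closedness is really used is precisely where you locate the technical heart: the Cantor-scheme step, where limits of edge-witnesses must remain edges. The way both the original proof in \cite{KST} and Miller's elementary proof handle an analytic $G$ is to fix a closed set $F\subseteq X\times X\times\I N^{\I N}$ whose projection is $G$ and to carry the $\I N^{\I N}$-coordinate through the entire construction, shrinking basic boxes in the three-factor product (equivalently, running the derivative and the scheme against the finite-level approximations to $F$ indexed by $\I N^{<\I N}$), so that closedness of $F$, rather than of $G$, supplies the edge in the limit. Without that witness coordinate your argument only establishes the dichotomy for closed graphs, which is strictly weaker than what the statement asserts.
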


Next we formulate two versions of the $\I G_0$-dichotomy that we use in this paper.
For the first one we need to recall some notation from \cite{MillerLacunary}.
Let $k_n\in \I N$ be such that $k_0=0$, $k_{n+1}\le \max\{k_m:m\le n\}+1$ for every $n\in\I N$ and for every $k\in\I N$ there are infinitely many $n\in\I N$ such that $k_n=k$.
Fix $(s_n)_{n\in \I N}\in 2^{\I N}$ such that $|s_n|=n$ for every $n\in \I N$ and $\{s_n:k_n=k\}$ is dense in $2^{<\I N}$ for every $k\in \I N$.
Set
$$\I G_{0,k}=\bigcup \left\{\I G_{s_n}:k_n=k\right\}$$
for every $k\in \I N$.

\begin{theorem}[Theorem~2 \cite{MillerLacunary}]\label{thm:G_0Ben}
Suppose that $X$ is a Hausdorff space and $(G_{i,j})_{i,j\in \I N}$ is an
increasing-in-$j$ sequence of analytic digraphs on $X$.
Then exactly one of the following holds:
\begin{enumerate}
    \item there is a sequence $(B_i)_{i\in \I N}$ of Borel subsets of $X$ such that $X=\bigcup_{i\in \I N}B_i$ and
    the Borel chromatic number of $G_{i,j}\upharpoonright B_i$ is at most countable for every $i,j\in \I N$,
    \item there exist a function $f:\I N\to \I N$ and a continuous homomorphism $\varphi:2^{\I N}\to X$ from $(\I G_{0,k})_{k\in \I N}$ to $(G_{k,f(k)})_{k\in \I N}$.
\end{enumerate}
\end{theorem}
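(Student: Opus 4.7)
The plan is to adapt Miller's elementary proof of the $\I G_0$-dichotomy to the present setting with an increasing-in-$j$ family of analytic digraphs; the key new wrinkle is identifying the function $f:\I N \to \I N$, which will come from a single ``bad'' point witnessing the failure of~(1). For mutual exclusivity, suppose both (1) and (2) hold. The pullbacks $\varphi^{-1}(B_k)$ are Borel and cover $2^\I N$, so some $\varphi^{-1}(B_{k_0})$ is non-meager; since $\{s_n : k_n = k_0\}$ is dense in $2^{<\I N}$, the standard Baire category argument underlying the lower bound of the classical $\I G_0$-dichotomy shows that $\I G_{0,k_0}$ has uncountable Borel chromatic number on every non-meager Borel subset of $2^\I N$. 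But $B_{k_0}$ decomposes into countably many Borel $G_{k_0,f(k_0)}$-independent sets, and pulling back along the homomorphism $\varphi$ would then yield a countable Borel $\I G_{0,k_0}$-proper coloring of $\varphi^{-1}(B_{k_0})$, a contradiction.

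For the forward direction, assume (1) fails. For each $i,j \in \I N$ let $\mathcal{J}_{i,j}$ denote the $\sigma$-ideal of Borel subsets of $X$ with $G_{i,j}$-Borel chromatic number at most $\aleph_0$, and let $U_{i,j}$ be its $\subseteq$-maximal Borel element (which exists by standard exhaustion of a countable Borel basis of the ideal). Because $G_{i,j} \subseteq G_{i,j+1}$, the sets $U_{i,j}$ are $\subseteq$-decreasing in $j$; set $U_i = \bigcap_j U_{i,j}$, so that (1) is equivalent to $X = \bigcup_i U_i$. Under our hypothesis we may fix $x_0 \in X \setminus \bigcup_i U_i$, and for each $i$ pick $f(i) \in \I N$ minimal with $x_0 \notin U_{i,f(i)}$. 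Let $Z = X \setminus \bigcup_i U_{i,f(i)}$; then $Z$ is a Borel set containing $x_0$, and for every non-empty $\Sigma^1_1$ set $A$ meeting $Z$ the restriction $G_{i,f(i)} \upharpoonright A$ has uncountable Borel chromatic number for every $i$.

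Next I recurse on $|s|$ in the Gandy--Harrington topology on $X$ to construct non-empty $\Sigma^1_1$ sets $(A_s)_{s \in 2^{<\I N}}$, together with approximations coming from fixed tree representations of the analytic sets $G_{k,f(k)}$, such that $A_\emptyset \ni x_0$, $A_{s^\frown (t)} \subseteq A_s$ for $t \in \{0,1\}$, $A_s \cap Z \neq \emptyset$, $\diam(A_s) \to 0$ in a fixed compatible complete metric, and for every $n \in \I N$ and every $t \in 2^{<\I N}$ the sets $A_{s_n^\frown (0)^\frown t}$ and $A_{s_n^\frown (1)^\frown t}$ contain points joined by an edge of $G_{k_n,f(k_n)}$ whose tree-witness is extended consistently at all further stages. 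The splitting step at stage $n$ is possible because $A_{s_n}$ meets $Z$, hence $A_{s_n} \not\subseteq U_{k_n,f(k_n)}$, so $G_{k_n,f(k_n)} \upharpoonright A_{s_n}$ has uncountable Borel chromatic number and thus admits an edge between $\Sigma^1_1$ halves of $A_{s_n}$ carrying the required tracked witness. The map $\varphi:2^\I N \to X$ defined by $\{\varphi(c)\} = \bigcap_n A_{c\upharpoonright n}$ is then continuous and, by closure of the tracked witnesses under limits, a homomorphism from $(\I G_{0,k})_{k\in\I N}$ to $(G_{k,f(k)})_{k\in\I N}$.

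The main obstacle is the bookkeeping in the tree construction: one must simultaneously maintain the tracked witnesses of edges in $G_{k_n,f(k_n)}$ for all previously handled $n$, split consistently over the continuum of extensions $t \in 2^{<\I N}$, shrink diameters, and preserve $Z$-nonemptiness at every node. The increasing-in-$j$ hypothesis is indispensable here because it lets us fix the single function $f$ from the point $x_0$ \emph{before} the construction begins, rather than allowing $f$ to drift upward as the recursion proceeds.
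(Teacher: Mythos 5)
The paper does not prove this theorem; it is cited verbatim as Theorem~2 of Miller's paper \cite{MillerLacunary}, so there is no ``paper's own proof'' to compare against here. That said, your reconstruction does follow Miller's strategy: the central observation that a single point $x_0$ escaping the ``good'' decomposition determines the function $f$ once and for all, before the tree construction begins, is exactly the right idea, and your mutual exclusivity argument is the standard Baire category pullback. The one place that needs tightening is the claim that the $\sigma$-ideal of Borel sets of countable $G_{i,j}$-Borel chromatic number has a $\subseteq$-maximal \emph{Borel} element by ``standard exhaustion of a countable Borel basis''; such a basis is not available for this ideal in the boldface setting. The usual fix is to work effectively: take $U_{i,j}$ to be the union of all lightface $\Sigma^1_1(p)$ sets (for a suitable parameter $p$ coding the $G_{i,j}$) that can be covered by countably many Borel $G_{i,j}$-independent sets; this union is over a countable family, is itself $\Sigma^1_1(p)$ and lies in the ideal, and its $\Pi^1_1(p)$ complement is where the Gandy--Harrington recursion runs. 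With that substitution your $Z$ becomes $\Pi^1_1$ rather than Borel, which is what the recursion actually wants, and the rest of your sketch (the tracked witnesses, consistent splitting across all further extensions $t$, shrinking diameters) is standard bookkeeping as you say.
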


We define a generalization of $\I G_0$ and $\I E_0$ for special class of finitely branching trees.
We say that a tree $T\subseteq \I N^{<\I N}$ is {\it finitely uniformly branching} if there is a sequence $(\ell^T_m)_{m\in \I N}$ of natural numbers such that $\ell^T_m\ge 2$ for every $m\in \I N$ and we have
$$\ell^T_{|s|}=\{i\in \I N:s^\frown (i)\in T\}$$
for every $s\in T$.
If $T$ is a tree and $s\in T$, then we define $T_s=\{t\in \I N^{<\I N}:s^\frown t\in T\}$.
Note that $T_s=T_t$ whenever $t,s\in T$ and $|t|=|s|$.
We denote as $[T]\subseteq \I N^{\I N}$ the set of all branches through $T$, i.e., $\alpha\in [T]$ if and only if $\alpha\upharpoonright m\in T$ for every $m\in \I N$.

\begin{definition}
Let $T$ be a finitely uniformly branching tree and $s\in T$.
The dihypergraph $\I G^T_s$ on $[T]$ is defined as
$$\I G^T_s=\left\{(s^\frown (i)^\frown \alpha)_{i<\ell^T_{|s|}}:\alpha\in [T_{s^\frown (0)}]\right\}.$$
\end{definition}

The equivalence relation $\I E^T_0$ on $[T]$ is defined as
$$(\alpha,\beta)\in \I E^T_0 \ \Leftrightarrow \ |\{n\in \I N:\alpha(n)\not=\beta(n)\}|<\aleph_0$$
where $\alpha,\beta\in [T]$.
Note that in the case when $T=2^{<\I N}$ we have $\I E^T_0=\I E_0$.

\begin{theorem}[$\I G_0$-dichotomy for dihypergraphs, Theorem 2.2.12 \cite{MillerLectureNotes}]\label{thm:HypG0}
Let $X$ be a Hausdorff space and let $(\fH_m)_{m\in \I N}$ be a sequence of analytic dihypergraphs on $X$. Then at least one of the following holds:
\begin{enumerate}
    \item there is a sequence $(B_k)_{k\in \I N}$ of Borel subsets of $X$ such that $X=\bigcup_{k\in \I N}B_k$ and for every $k\in \I N$ there is $m(k) \in \I N$ such that $B_k$ is $\fH_{m(k)}$-independent,
    \item  there is a finitely uniformly branching tree $T$, a dense sequence $(s_m)_{m\in \I N}\subseteq T$ such that $|s_m| = m$ for every $m\in \I N$ and a continuous homomorphism $\varphi:[T]\to X$ from $(\I G^T_{s_m})_{m\in \I N}$ to $(\fH_m)_{m\in \I N}$.
\end{enumerate}
Moreover, if the sequence $(\fH_m)_{m\in \I N}$ is decreasing then the conditions are mutually exclusive.
\end{theorem}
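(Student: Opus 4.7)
My plan is to adapt Miller's elementary proof of the classical $\I G_0$-dichotomy, now with the sequence of analytic dihypergraphs $(\fH_m)_{m\in \I N}$ driving the branching data of the tree $T$ and the dense indexing $(s_m)_{m\in \I N}$ providing the bookkeeping.

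For the mutual exclusivity under the decreasing hypothesis, I would suppose both (1) and (2) hold simultaneously. Pulling the Borel cover $(B_k)_{k\in \I N}$ back through $\varphi$ yields a Borel cover of $[T]$, so by the Baire category theorem some $\varphi^{-1}(B_k)$ is comeager in a basic clopen set $N_s\cap [T]$. Density of $(s_m)_{m\in \I N}$ lets me pick $m\geq m(k)$ with $s\sqsubseteq s_m$, and a Kuratowski--Ulam argument in $[T_{s_m^\frown (0)}]$ produces $\alpha$ such that every coordinate of the tuple $(s_m^\frown (i)^\frown \alpha)_{i<\ell^T_{|s_m|}}\in \I G^T_{s_m}$ lies in $\varphi^{-1}(B_k)$. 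Pushing this $\I G^T_{s_m}$-edge forward through $\varphi$ produces an edge in $\fH_m\subseteq \fH_{m(k)}$ whose vertices all sit in $B_k$, contradicting $\fH_{m(k)}$-independence.

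For the main direction, assume (1) fails. Call a nonempty analytic $A\subseteq X$ \emph{thick} if there is no Borel cover $A=\bigcup_k B_k$ together with a choice of $m(k)\in \I N$ making each $B_k$ a $\fH_{m(k)}$-independent subset of $X$; the failure of (1) says $X$ itself is thick, and thickness is routinely preserved under standard constructions. I would then build $T$, $(s_m)_{m\in \I N}$, and a collection $(A_s)_{s\in T}$ of thick analytic sets by induction. At stage $n$, having arranged $\ell^T_0,\dots,\ell^T_{n-1}$, $s_0,\dots,s_{n-1}$, and thick $A_s$ for $s\in T\cap \I N^n$ with diameters at most $2^{-n}$, I pick $s_n$ using a fair enumeration that visits each node of $T$ infinitely often. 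Thickness of $A_{s_n}$ rules out $\fH_n$-independence on it, so there is an edge $(x_i)_{i<\ell}\in \fH_n\cap A_{s_n}^{<\I N}$; set $\ell^T_n=\ell$. Using a Becker--Kechris refinement of the topology that makes the witnessing analytic sets open, I would shrink around each $x_i$ to obtain thick $A_{s_n^\frown (i)}\subseteq A_{s_n}$, while simultaneously refining the parallel copies $A_{t^\frown (i)}\subseteq A_t$ for the other $t\in T\cap \I N^n$ so that all have diameter at most $2^{-(n+1)}$.

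Defining $\varphi:[T]\to X$ by $\{\varphi(\alpha)\}=\bigcap_n \overline{A_{\alpha\upharpoonright n}}$ yields a continuous map, and the stage-$n$ edge guarantees that $\varphi$ sends tuples in $\I G^T_{s_n}$ into $\fH_n$. The main obstacle, and the real content of the argument, is the inductive refinement step: at stage $n$ one must select an $\fH_n$-edge inside the already-committed thick sets, preserve thickness after shrinking, and propagate the refinement coherently across the parallel branches $t\neq s_n$ so that the homomorphism condition for $\I G^T_{s_n}$ holds uniformly in $\alpha\in [T_{s_n^\frown (0)}]$. Controlling all three at once, through the combinatorics of analyticity and a sufficiently flexible topological refinement, is exactly what Miller's argument for the classical dichotomy achieves, and generalizing it to arbitrary-arity dihypergraphs with per-stage branching $\ell^T_n$ is the main technical content.
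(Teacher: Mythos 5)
The paper does not prove \cref{thm:HypG0}; it cites the result directly from Miller's lecture notes, so there is no in-paper argument to compare against. That said, your sketch follows the standard elementary route (Miller-style approximations via thick analytic sets), and the mutual-exclusivity half is essentially correct: since $m\ge m(k)$ and $(\fH_m)$ is decreasing, the $\I G^T_{s_m}$-edge you produce maps into $\fH_m\subseteq\fH_{m(k)}$ with all vertices in $B_k$, contradicting $\fH_{m(k)}$-independence. The Kuratowski--Ulam step works because $[T_{s_m^\frown(i)}]=[T_{s_m^\frown(0)}]$ for all $i<\ell^T_{|s_m|}$, so a finite intersection of comeager sets is nonempty.

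The existence half, however, has a genuine gap at the final claim that ``the stage-$n$ edge guarantees that $\varphi$ sends tuples in $\I G^T_{s_n}$ into $\fH_n$.'' At stage $n$ you secure one $\fH_n$-edge $(x_i)_{i<\ell}$ and center the children $A_{s_n^\frown(i)}$ around the $x_i$; but the homomorphism condition requires that \emph{every} tuple $(\varphi(s_n^\frown(i)^\frown\alpha))_{i<\ell^T_n}$, for \emph{every} common tail $\alpha\in[T_{s_n^\frown(0)}]$, lie in $\fH_n$. Since $\fH_n$ is merely analytic, shrinking diameters in the ambient Polish topology does not force these limits into $\fH_n$: analytic sets are not open, and nothing in your construction pins them down. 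The move you invoke to fix this --- ``a Becker--Kechris refinement of the topology that makes the witnessing analytic sets open'' --- does not exist in the form you need: Becker--Kechris topology refinements make \emph{Borel} sets open (or make a Borel $G$-action continuous), not analytic sets, and the Gandy--Harrington topology (which does make analytic sets open) is not Polish, so diameters there are not a usable shrinking device. What the real argument does is carry an additional piece of data through the induction: for each $s_m$ already treated, a node of a Souslin scheme for $\fH_m$ (equivalently, a forcing condition in the Gandy--Harrington sense) certifying that the current tuple of approximating sets sits inside $\fH_m$, and this node is extended coherently at every later stage so that the limit tuple is forced into $\fH_m$. Without this extra bookkeeping the final verification of the homomorphism property does not go through. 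You should also note that ``thickness rules out $\fH_n$-independence of $A_{s_n}$'' needs a one-line reflection/Luzin-separation argument (an $\fH_n$-independent analytic set extends to an $\fH_n$-independent Borel set), since $A_{s_n}$ is analytic rather than Borel; this is routine but worth making explicit.
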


\subsection{Borel pseudometrics}\label{sec:pseuodmetric}

\begin{definition}
Let $T$ be a finitely uniformly branching tree.
A function ${\bf d}:[T]\times [T]\to [0,+\infty]$ is called a \emph{Borel pseudometric} if
\begin{enumerate}
	\item ${\bf d}$ is pseudometric,
	\item ${\bf d}^{-1}([0,\epsilon))$ is a Borel subset of $[T]\times [T]$ for every $\epsilon>0$,
	\item $(\{\beta:{\bf d}(\alpha,\beta)<+\infty\},{\bf d})$ is a separable pseudometric space for every $\alpha\in [T]$,
	\item if $\alpha_n\to_{[T]} \alpha$ and $(\alpha_n)_{n\in \I N}$ is a ${\bf d}$-Cauchy sequence, then ${\bf d}(\alpha_n,\alpha)\to 0$.
\end{enumerate}
Moreover, we say that a Borel pseudoemtric is \emph{uniform} if
\begin{itemize}
	\item for every $m\in \I N$, $s,t\in T\cap \I N^{m}$ and $\alpha,\beta\in [T_s]=[T_t]$ we have 
	$$\left|{\bf d}(s^\frown \alpha,t^\frown \alpha)-{\bf d}(s^\frown \beta,t^\frown \beta)\right|<\frac{1}{2^m},$$
	$$\left|{\bf d}(s^\frown \alpha,s^\frown \beta)-{\bf d}(t^\frown \alpha,t^\frown \beta)\right|<\frac{1}{2^m}$$
	where we set $|(+\infty)-(+\infty)|=0$.
\end{itemize}
\end{definition}

We describe a canonical way how to find Borel pseudometrics.
Recall that if $G$ is a tsi Polish group, then $d$ is a fixed compatible two-sided invariant metric on $G$.

\begin{proposition}\label{pr:Borel pseudo}
Let $G$ be a tsi Polish group, $X$ be a Polish $G$-space such that $E^X_G$ is Borel, $T$ be a finitely uniformly branching tree and $\varphi:[T]\to X$ be a continuous map.
Then the function ${\bf d}:[T]\times [T]\to [0,+\infty]$ defined as
$${\bf d}(\alpha,\beta)=\inf\{d(g,1_G):g\in G \ \wedge \ g\cdot \varphi(\alpha)=\varphi(\beta)\}$$
for $\alpha,\beta\in [T]$ is a Borel pseudometric. 
\end{proposition}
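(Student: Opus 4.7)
The plan is to verify the four defining conditions of a Borel pseudometric in turn, treating (1), (3) and (4) as fairly direct consequences of two-sided invariance and completeness of $d$, while isolating (2) as the main technical step. For (1), symmetry reduces to $d(g,1_G)=d(g^{-1},1_G)$, which is immediate from two-sided invariance. For the triangle inequality, if $g\cdot\varphi(\alpha)=\varphi(\beta)$ and $h\cdot\varphi(\beta)=\varphi(\gamma)$, then $(hg)\cdot\varphi(\alpha)=\varphi(\gamma)$ and right-invariance gives $d(hg,1_G)\le d(h,1_G)+d(g,1_G)$; taking infima yields the triangle inequality for ${\bf d}$. Non-negativity and ${\bf d}(\alpha,\alpha)=0$ (from $g=1_G$) are clear.

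The Borelness in (2) is the main obstacle. I would rewrite
\[
\{(\alpha,\beta):{\bf d}(\alpha,\beta)<\epsilon\}=(\varphi\times\varphi)^{-1}\bigl(R^X_{\Delta_\epsilon}\bigr),
\]
so the problem reduces to showing that $R^X_{\Delta_\epsilon}=\{(x,y)\in X\times X:\exists g\in\Delta_\epsilon\ g\cdot x=y\}$ is Borel. A priori this is only an analytic set, being the projection of the Borel set $\{(x,y,g):g\in\Delta_\epsilon,\ g\cdot x=y\}$. The hypothesis that $E^X_G$ is Borel is exactly what promotes analyticity to Borelness: via Becker--Kechris one chooses a Borel selector $\sigma:E^X_G\to G$ with $\sigma(x,y)\cdot x=y$, and the stabilizer map $x\mapsto\mathrm{Stab}(x)$ is Borel into the Effros Borel space of closed subsets of $G$; then membership in $R^X_{\Delta_\epsilon}$ becomes the Borel condition that $(x,y)\in E^X_G$ and $\mathrm{Stab}(x)\cap\sigma(x,y)^{-1}\Delta_\epsilon\ne\emptyset$. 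Pinning down the exact selector theorem to cite is where most of the care is required.

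Finally, (3) and (4) follow from separability and completeness of $G$. For (3), the map $\varphi$ restricted to $\{\beta:{\bf d}(\alpha,\beta)<+\infty\}$ is an isometry onto a subset of $(G\cdot\varphi(\alpha),d^*)$, where $d^*(x,y)=\inf\{d(g,1_G):g\cdot x=y\}$. Since the orbit map $g\mapsto g\cdot\varphi(\alpha)$ is $1$-Lipschitz from $d$ to $d^*$ by right-invariance, separability of $G$ transfers to the orbit, and lifting a countable $d^*$-dense subset of $\varphi(\{\beta:{\bf d}(\alpha,\beta)<+\infty\})$ through $\varphi$ gives a countable ${\bf d}$-dense subset of the desired space. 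For (4), given a ${\bf d}$-Cauchy sequence $\alpha_n\to\alpha$ in $[T]$, for each $\epsilon>0$ I would thin to a subsequence $n_0<n_1<\cdots$ and recursively choose $g_k\in G$ with $g_k\cdot\varphi(\alpha_{n_0})=\varphi(\alpha_{n_k})$ and $d(g_{k+1},g_k)<\epsilon\cdot 2^{-k-2}$ (possible since $d(g_{k+1},g_k)=d(g_{k+1}g_k^{-1},1_G)$ by right-invariance, and the Cauchy condition bounds ${\bf d}(\alpha_{n_k},\alpha_{n_{k+1}})$ accordingly). Completeness of $d$ on $G$, automatic for two-sided invariant compatible metrics, forces $g_k\to g\in G$ with $d(g,1_G)<\epsilon$; continuity of the action together with $\varphi(\alpha_{n_k})\to\varphi(\alpha)$ (from continuity of $\varphi$) gives $g\cdot\varphi(\alpha_{n_0})=\varphi(\alpha)$, so ${\bf d}(\alpha_{n_0},\alpha)<\epsilon$, and the Cauchy property propagates this to ${\bf d}(\alpha_n,\alpha)\to 0$.
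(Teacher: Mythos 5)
Your proposal is correct and, for conditions (1), (3) and (4), follows essentially the same route as the paper: symmetry and the triangle inequality from two-sided invariance of $d$, separability of $\{\beta:{\bf d}(\alpha,\beta)<+\infty\}$ by transferring separability of $(G,d)$ through the ($1$-Lipschitz) orbit map, and condition (4) by composing small witnesses along a subsequence and using completeness of the invariant metric together with continuity of the action and of $\varphi$.

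The only divergence is in step (2). Both arguments reduce it to the identity ${\bf d}^{-1}([0,\epsilon))=(\varphi^{-1}\times\varphi^{-1})(R^X_{\Delta_\epsilon})$, but the paper then simply cites the known fact (Becker--Kechris/Gao, Theorem~7.1.2) that $R^X_{\Delta_\epsilon}$ is Borel whenever $E^X_G$ is Borel, whereas you sketch a proof of this fact via a Borel selector $\sigma$ for witnesses, the Borel stabilizer map into the Effros Borel space, and a Kuratowski--Ryll-Nardzewski selection sequence to express $\mathrm{Stab}(x)\cap\sigma(x,y)^{-1}\Delta_\epsilon\neq\emptyset$ Borel-measurably. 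This is workable, but be aware that the Borelness of $(x,y)\mapsto\{g:g\cdot x=y\}$ (hence the existence of your selector) is essentially the content of the theorem being cited, so your route amounts to re-deriving the auxiliary result rather than genuinely avoiding it; citing it directly, as the paper does, is shorter and avoids the bookkeeping you flag as delicate.
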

\begin{proof}
(1.)
The invariance of $d$ guarantees that $d(g,1_G)=d(g^{-1},1_G)$ for every $g\in G$ and consequently that ${\bf d}$ is symmetric.
Let $\alpha,\beta,\gamma\in [T]$.
We may assume that ${\bf d}(\alpha,\beta)+{\bf d}(\beta,\gamma)<+\infty$.
In that case for every $\epsilon>0$ there is $g,h\in G$ such that $d(g,1_G)<{\bf d}(\alpha,\beta)+\epsilon$ and $d(h,1_G)<{\bf d}(\beta,\gamma)+\epsilon$.
Then we have 
$${\bf d}(\alpha,\gamma)-2\epsilon\le d(h\cdot g,1_G)-2\epsilon\le d(h,1_G)+d(g,1_G)-2\epsilon<{\bf d}(\alpha,\beta)+{\bf d}(\beta,\gamma)$$
because $d(h\cdot g,1_G)\le d(h\cdot g,g)+d(g,1_G)=d(h,1_G)+d(g,1_G)$ by the invariance of $d$.

(2.)
Recall that for $\epsilon>0$ we defined $\Delta_{\epsilon}=\{g\in G:d(g,1_G)<\epsilon\}$.
It follows from our assumption that $E^X_G$ is a Borel equivalence relation together with \cite[Theorem~7.1.2]{Gao} that the relation $R^X_{\Delta_{\epsilon}}$ is Borel for every $\epsilon>0$.
We have
$${\bf d}^{-1}([0,\epsilon))=\left\{(\alpha,\beta)\in [T]\times [T]:{\bf d}(\alpha,\beta)<\epsilon \right\}=\left(\varphi^{-1}\times \varphi^{-1}\right)(R^X_{\Delta_\epsilon})$$
and that shows (2).

(3.)
Let $\alpha\in [T]$.
The space $G_\alpha=\{g\in G:\exists \beta\in [T] \ g\cdot\varphi(\alpha)=\varphi(\beta)\}$ endowed with $d$ is a separable metric space.
It is easy to see that the assignment $g\mapsto \beta$ where $g\cdot \varphi(\alpha)=\varphi(\beta)$ is a contraction from $(G_{\alpha},d)$ to the quotient metric space $(\{\beta:{\bf d}(\alpha,\beta)<+\infty\}/{\bf d},\bf d)$.

(4.)
Let $(\alpha_n)_{n\in \I N},\alpha\in [T]$ satisfy the assumptions of (4).
After possibly passing to a subsequence we may suppose that there is a sequence $(g_n)_{n\in \I N}\subseteq G$ such that $g_n\cdot \varphi(\alpha_n)=\varphi(\alpha_{n+1})$ and $d(g_n,1_G)<\frac{1}{2^n}$.
Define $h^n_m=g_{n-1}\cdot {\dots} \cdot g_m$ for every $m<n\in \I N$.
Then it follows that $(h^n_m)_{n\in \I N}$ is $d$-Cauchy whenever $m\in \I N$ is fixed.
Since $d$ is complete there is $(h_m)_{m\in \I N}\in G$ such that $h^n_m\to h_m$ for every $m\in \I N$.
Moreover, we have $d(h_m,1_G)<\frac{1}{2^{m-1}}$.
Continuity of the action and of the map $\varphi$ gives
$$h_m\cdot \varphi(\alpha_m)\leftarrow h^n_m\cdot \varphi(\alpha_m)=\varphi(\alpha_n)\to \varphi(\alpha).$$
This finishes the proof.
\end{proof}

Every Borel pseudoemtric ${\bf d}$ on $[T]$ defines a Borel equivalence relation $F_{\bf d}$ on $[T]$ as
$$(\alpha,\beta)\in F_{\bf d} \ \Leftrightarrow \ {\bf d}(\alpha,\beta)<+\infty.$$
Note that in the case of \cref{pr:Borel pseudo} we have that $F_{{\bf d}}=\left(\varphi^{-1}\times \varphi^{-1}\right)(E^X_G)$.

\begin{theorem}\label{th:uniform nonmeager is everything}
Let $T$ be a finitely uniformly branching tree and ${\bf d}$ be a uniform Borel pseudometric such that $\I E^T_0\subseteq F_{\bf d}$.
Then the following are equivalent
\begin{itemize}
	\item [(a)] $F_{\bf d}$ is nonmeager,
	\item [(b)] $F_{\bf d}=[T]\times [T]$.
\end{itemize}
\end{theorem}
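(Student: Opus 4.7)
The reverse direction is trivial, so I focus on (a) $\Rightarrow$ (b). The plan has two stages: first, use generic ergodicity to isolate a unique comeager $F_{\bf d}$-class $C$; second, use separability, uniformity, and property~(4) of the pseudometric to promote ``comeager'' to ``everything''.

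Since $\I E^T_0\subseteq F_{\bf d}$, every $F_{\bf d}$-class is $\I E^T_0$-invariant and Borel, hence meager or comeager by the category version of Kolmogorov's tail $0$--$1$ law. The set $F_{\bf d}\subseteq [T]\times [T]$ is itself Borel and $(\I E^T_0\times\I E^T_0)$-invariant, so by (a) it must be comeager in $[T]\times[T]$, and Kuratowski--Ulam then yields a (necessarily unique) comeager $F_{\bf d}$-class $C$.  To show $C=[T]$, I fix an arbitrary $\alpha\in[T]$ and plan to build a ${\bf d}$-Cauchy sequence $(\gamma_k)_k$ in $C$ with $\gamma_k\to\alpha$ in $[T]$; property~(4) will then force $\alpha\in C$.

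Using separability of $(C,{\bf d})$ from property~(3), fix a ${\bf d}$-dense sequence $(d_n)_n\subseteq C$.  For each $k\in\I N$ the cover $C=\bigcup_n\{\beta:{\bf d}(d_n,\beta)<1/2^k\}$ forces some ball $B_k=\{\beta:{\bf d}(d_{n_k},\beta)<1/2^k\}$ to be non-meager; being Borel by property~(2), it is comeager in some basic open set $[s_k]:=\{\gamma\in[T]:s_k\sqsubseteq\gamma\}$, and after extending $s_k$ one can arrange $m_k:=|s_k|\ge k$ with $(m_k)_k$ strictly increasing in $k$.  Transfer this comeager bulk to $[\alpha|_{m_k}]$ via the prefix-swap homeomorphism $\sigma_k\colon [s_k]\to[\alpha|_{m_k}]$ sending $s_k{}^\frown\eta\mapsto\alpha|_{m_k}{}^\frown\eta$, and set $E_k:=\sigma_k(B_k\cap[s_k])$.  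Two properties of $E_k$ are key: every point of $E_k$ differs from a point of $B_k\cap[s_k]\subseteq C$ only in the first $m_k$ coordinates, so $E_k\subseteq C$ via $\I E^T_0\subseteq F_{\bf d}$; and the second uniformity inequality, combined with the triangle inequality through $d_{n_k}$, bounds the ${\bf d}$-diameter of $E_k$ by $2/2^k+1/2^{m_k}\le 3/2^k$.

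Now inductively select $\gamma_k\in E_k\cap E_{k-1}\cap[\alpha|_{m_k}]$, which is non-empty because $E_{k-1}\cap[\alpha|_{m_k}]$ remains comeager in $[\alpha|_{m_k}]$ (the latter being an open subset of $[\alpha|_{m_{k-1}}]$) while $E_k$ is comeager there too.  The bound ${\bf d}(\gamma_{k-1},\gamma_k)<3/2^{k-1}$ then follows from $\gamma_{k-1},\gamma_k\in E_{k-1}$, so $(\gamma_k)$ is ${\bf d}$-Cauchy in $C$; meanwhile $\gamma_k\in[\alpha|_{m_k}]$ with $m_k\to\infty$ gives $\gamma_k\to\alpha$ in $[T]$.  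Property~(4) now activates: ${\bf d}(\gamma_k,\alpha)\to 0$, so $\alpha\in[\gamma_k]_{F_{\bf d}}=C$.  The most delicate point is the diameter bound on $E_k$, which uses the second uniformity condition critically and requires absorbing the $1/2^{m_k}$ slack into the $3/2^k$ budget via the choice $m_k\ge k$; that prefix swaps carry $C$ to itself is what $\I E^T_0\subseteq F_{\bf d}$ is ultimately used for.
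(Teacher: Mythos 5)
The proof is correct, and it takes a genuinely different route from the paper. The paper's argument is a dichotomy on a \emph{tail-uniformity} property of ${\bf d}$: either for every $k$ there is $m_k$ so that any two $\I E^T_0$-related points agreeing on the first $m_k$ coordinates are within $1/k$, in which case a direct ${\bf d}$-Cauchy argument plus axiom~(4) gives $F_{\bf d}=[T]\times[T]$ \emph{without invoking (a) at all}; or this fails with some $\epsilon>0$, and then nonmeagerness, separability, and \emph{both} uniformity inequalities are combined to manufacture a contradiction. Your proof bypasses the dichotomy: you go directly from (a) to a unique comeager class $C$, use separability to find a ${\bf d}$-small Borel ball comeager in some $[s_k]$ with $|s_k|=m_k\ge k$, transport it to $[\alpha\!\upharpoonright\! m_k]$ by a prefix swap (staying inside $C$ because $\I E^T_0\subseteq F_{\bf d}$), and let the \emph{second} uniformity inequality absorb the $1/2^{m_k}$ cost of the swap into the $O(1/2^k)$ diameter budget; the resulting $\gamma_k$'s form a ${\bf d}$-Cauchy sequence in $C$ converging in $[T]$ to the arbitrary $\alpha$, so axiom~(4) puts $\alpha\in C$. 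Your version is somewhat more economical in its use of the hypotheses — it needs only the second uniformity inequality, not the first — while the paper's Case~1 isolates a purely metric sufficient condition for totality that holds independently of category. Two small presentation points worth tidying: handle the base case of the induction (e.g.\ set $E_{-1}=[T]$), and note explicitly that $B_k\subseteq C$ because ${\bf d}(d_{n_k},\cdot)<1/2^k<+\infty$ forces membership in $[d_{n_k}]_{F_{\bf d}}=C$; both are immediate but worth a sentence.
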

\begin{proof}
(b) $\Rightarrow$ (a) is trivial.
We show that (a) $\Rightarrow$ (b).
Suppose first, that for every $k\in \I N\setminus \{0\}$ there is $m_k\in \I N$ such that ${\bf d}(\alpha,\beta)<\frac{1}{k}$ for every $\alpha,\beta\in [T]$ such that $\left\{n\in \I N:\alpha(n)\not=\beta(n)\right\}\cap m_k=\emptyset$ and $(\alpha,\beta)\in \I E^T_0$.
We may assume that $(m_k)_{k\in \I N}\subseteq \I N$ is strictly increasing and we set $m_0=0$.

Let $x,y\in [T]$ and define $y_k\in [T]$ such that $y_k\upharpoonright m_k=y$ and $y_k(n)=x(n)$ for every $n\ge m_k$.
Then clearly $y_0=x$, $(y_r,y_s)\in \I E^T_0\subseteq F_{\bf d}$ for every $r,s\in \I N$ and $y_k\to_{[T]} y$.
Let $k\in \I N\setminus \{0\}$ and $r,s\ge k$.
Then we have 
$$|\{n\in \I N:y_r(n)\not =y_s(n)\}|\cap m_k=\emptyset$$
and consequently ${\bf d}(y_r,y_s)<\frac{1}{k}$.
This shows that $(y_k)_{k\in \I N}$ is a ${\bf d}$-Cauchy sequence and by (4) from the definition of Borel pseudometric we have ${\bf d}(y_k,y)\to 0$.
In particular, there is $k\in \I N$ such that ${\bf d}(y_k,y)<+\infty$ and therefore $(y_k,y)\in F_{\bf d}$.
Altogether we have $(x,y)\in F_{\bf d}$ and since $x,y\in [T]$ were arbitrary we have that $F_{\bf d}=[T]\times [T]$.

Second, suppose that there is $\epsilon>0$ such that for every $m\in \I N$ there are $\alpha_m,\beta_m\in [T]$ such that ${\bf d}(\alpha,\beta)>\epsilon$, $\{n\in \I N:\alpha(n)\not=\beta(n)\}\cap m=\emptyset$ and $(\alpha_m,\beta_m)\in \I E^T_0$.
We show that this contradicts $F_{\bf d}$ being non-meager.

Note that $F_{\bf d}$ is a Borel equivalence relation and every $F_{\bf d}$-equivalence class is dense in $[T]$ because $\I E^T_0\subseteq F_{\bf d}$.
This implies, by \cite[Theorem~8.41]{kechrisclassical}, that there is $\alpha\in [T]$ such that $[\alpha]_{F_{\bf d}}$ is comeager in $[T]$.
By (3.) in the definition of Borel pseudometric, there are Borel sets $(U_\ell)_{\ell\in \I N}$ such that $\bigcup_{\ell\in \I N} U_l=[\alpha]_{F_{\bf d}}$ and 
$${\bf d}(x,y)<\frac{\epsilon}{2}$$
for every $\ell\in \I N$ and $x,y\in U_\ell$.

By \cite[Proposition~8.26]{kechrisclassical}, we find $t'\in T$ and $\ell\in \I N$ such that $U_\ell$ is comeager in ${t'}^\frown [T_t]$.
Pick $m\in \I N$ such that $m\ge |t'|$ and $\frac{1}{m}<\frac{\epsilon}{4}$.
We may suppose that $\alpha_m=s^\frown {u_0}^\frown x$ and $\beta_m=s^\frown {u_1}^\frown x$ where $|s|= m$, $|u_0|=|u_1|$ and $x\in [T_{s^\frown u_0}]=[T_{s^\frown u_1}]$.

Let $t\in T$ be such that $t'\sqsubseteq t$ and $|t|=|s|=m$.
Then we have that $U_\ell$ is comeager in $t^\frown [T_t]$ and therefore there is $y\in [T_{t^\frown u_0}]=[T_{t^\frown u_1}]$ such that
$$t^\frown {u_0}^\frown y,t^\frown {u_1}^\frown y \in U_\ell.$$
In particular, we have ${\bf d}(t^\frown {u_0}^\frown y,t^\frown {u_1}^\frown y)<\frac{\epsilon}{2}$.

We use that ${\bf d}$ is uniform.
We have
$$\left|{\bf d}(s^\frown ({u_0}^\frown x),s^\frown ({u_1}^\frown x))-{\bf d}(t^\frown ({u_0}^\frown x),t^\frown ({u_1}^\frown x))\right|<\frac{1}{2^m}< \frac{1}{m}<\frac{\epsilon}{4}$$
and
$$\left|{\bf d}((t^\frown {u_0})^\frown x,(t^\frown {u_1})^\frown x)-((t^\frown {u_0})^\frown y,(t^\frown {u_1})^\frown y)\right|<\frac{1}{2^{|t^\frown u_0|}}< \frac{1}{m}<\frac{\epsilon}{4}.$$
This implies
$${\bf d}(t^\frown {u_0}^\frown y,t^\frown {u_1}^\frown y)\ge {\bf d}(s^\frown {u_0}^\frown x,s^\frown {u_1}^\frown x)-\frac{\epsilon}{2}>\frac{\epsilon}{2}$$
and that contradicts ${\bf d}(t^\frown {u_0}^\frown y,t^\frown {u_1}^\frown y)<\frac{\epsilon}{2}$.
This finishes the proof.
\end{proof}

\subsection{Base for non-classification by countable structures}\label{sec:BASE}

We describe the family of Borel equivalence relations that will serve as a base under Borel reducibility for non-classification by countable structures in the proof of \cref{thm:CCS=IC}.
To that end we recall definitions of two types of Borel equivalence relations that are well-studied \cite{Farah1,Farah2} and \cite[Chapters~3 and~15]{Kanovei}.
We denote the power set of $\I N$ as $\mathcal{P}(\I N)$.

A map $\Theta:\mathcal{P}(\I N)\to [0,+\infty]$ is a \emph{lsc submeasure} if $\Theta(\emptyset)=0$, $\Theta(M\cup N)\le \Theta(M)+\Theta(N)$ whenever $M, N\in \mathcal{P}(\I N)$, $\Theta(\{m\})<+\infty$ for every $m\in \I N$ and
$$\Theta(M)=\lim_{m\to\infty} \Theta(M\cap m)$$
for every $M\in \mathcal{P}(\I N)$.
We say that $\Theta$ is \emph{tall} if $\lim_{m\to \infty} \Theta(\{m\})=0$.

Let $\Theta$ be a tall lsc submeausre.
Then the equivalence relation $E_\Theta$ on $2^{\I N}$ is defined as
$$(x,y)\in E_\Theta \ \Leftrightarrow \ \lim_{m\to \infty}\Theta(\{n\in \I N\setminus m:x(n)\not=y(n)\})=0$$
for every $x,y\in 2^{\I N}$.
Similarly as in \cref{th:uniform nonmeager is everything}, we have that $E_\Theta$ is non-meager if and only if $E_\Theta=2^{\I N}\times 2^{\I N}$.
We refer the reader to \cite[Chapter~3]{Kanovei} for more information about lsc submeasures and their connection to Borel reducibility.

A sequence of finite metric spaces $((Z_m,\mathfrak{d}_m))_{m\in \I N}$ is called \emph{non-trivial} if 
$$\liminf_{m\to \infty} r(Z_m,\mathfrak{d}_m)>0 \ \& \ \lim_{m\to\infty} j(Z_m,\mathfrak{d}_m)=0$$
where $r(Z,\mathfrak{d})=\max \mathfrak{d}$ and $j(Z,\mathfrak{d})$ is the minimal $\epsilon>0$ such that there is $\ell\in \I N$ and a sequence $(z_0,\dots z_l)$ that contains every element of $Z$ and satisfies $\mathfrak{d}(z_i,z_{i+1})<\epsilon$ for every $i<l$.

Let $\mathcal{Z}=((Z_m,\mathfrak{d}_m))_{m\in \I N}$ be a non-trivial sequence of finite metric spaces and $\prod_{m\in \I N} Z_m$ be endowed with the product topology.
Then the equivalence relation $E_\mathcal{Z}$ on $\prod_{m\in \I N} Z_m$ is defined as
$$(x,y)\in E_\mathcal{Z} \ \Leftrightarrow \ \lim_{m\to \infty} \mathfrak{d}_m(x(m),y(m))=0$$
for every $x,y\in \prod_{m\in \I N} Z_m$.
We refer the reader to \cite{Farah1,Farah2,Kanovei} for more information about these equivalence relations and their connection to Borel reducibility.

\begin{definition}
Denote as $\mathfrak{B}$ the collection of all Borel meager equivalence relations that contain $E_\Theta$ for some tall lsc submeasure $\Theta$ or $E_\mathcal{Z}$ for some non-trivial sequence of finite metric spaces $\mathcal{Z}$.
That is for every $E\in \mathfrak{B}$ there is either tall lsc submeasure $\Theta$ such that $E_\Theta \subseteq E$ and $E$ is a meager subset of $2^{\I N}\times 2^{\I N}$, or there is a non-trivial sequence of finite metric spaces $\mathcal{Z}$ such that $E_\mathcal{Z}\subseteq E$ and $E$ is a meager subset of $\prod_{m\in \I N} Z_m \times \prod_{m\in \I N} Z_m$.
\end{definition}

\begin{theorem}\label{th:base for nonCCS}
Let $E\in \mathfrak{B}$.
Then $E$ is not classifiable by countable structures.
\end{theorem}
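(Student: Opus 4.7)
The idea is to derive a contradiction from assuming $E \in \mathfrak{B}$ is classifiable by countable structures, by exploiting generic $S_\infty$-ergodicity of a smaller equivalence relation nested inside $E$ together with the meagerness of $E$. Let $X$ denote the ambient Polish space ($X = 2^{\I N}$ or $X = \prod_{m \in \I N} Z_m$ according to the two alternatives in the definition of $\mathfrak{B}$), and suppose toward contradiction that $\varphi : X \to Y$ is a Borel reduction witnessing $E \le_B E^Y_{S_\infty}$ for some Polish $S_\infty$-space $Y$. Let $F \subseteq E$ denote the nested equivalence relation: $F = E_\Theta$ for a tall lsc submeasure $\Theta$ in the first case, and $F = E_\mathcal{Z}$ for a non-trivial sequence of finite metric spaces $\mathcal{Z}$ in the second.

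The key input is that $F$ is generically $S_\infty$-ergodic. In the first case this is the classical consequence of the turbulence of the canonical action of the exhaustive ideal tsi Polish group $\operatorname{Exh}(\Theta)$ on $2^{\I N}$ by symmetric difference, with turbulence using precisely the tallness of $\Theta$; see \cite[Chapter~15]{Kanovei} and \cite{Farah1}. In the second case, the analogous result for $c_0$-equalities, which uses precisely the two non-triviality conditions on $\mathcal{Z}$, is Farah's theorem; see \cite{Farah2,Kanovei}. Either invoking the cited references directly, or applying \cref{thm:GenErg} to the corresponding turbulent action, yields that $F$ is generically $S_\infty$-ergodic.

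To conclude, since $F \subseteq E$, the Borel map $\varphi$ is in particular a Baire measurable homomorphism from $F$ to $E^Y_{S_\infty}$, so generic ergodicity provides $y \in Y$ such that $C := \varphi^{-1}([y]_{E^Y_{S_\infty}})$ is comeager in $X$. Because $\varphi$ is a reduction for $E$, every pair of points of $C$ is $E$-equivalent, so $C \times C \subseteq E$; but $C \times C$ is comeager in $X \times X$, contradicting that $E$ is meager. The main obstacle in executing this plan is simply locating the generic $S_\infty$-ergodicity of $E_\Theta$ and $E_\mathcal{Z}$ in the literature; once that is in hand, the rest is a short diagram chase pitting a comeager preimage against the meagerness built into the definition of $\mathfrak{B}$.
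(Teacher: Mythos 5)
Your argument matches the paper's proof essentially step for step: extract the nested $E_\Theta$ or $E_\mathcal{Z}$, invoke turbulence of the canonical inducing action to obtain generic $S_\infty$-ergodicity via \cref{thm:GenErg}, and play a comeager preimage of a single class against the stipulated meagerness of $E$. One small imprecision in your write-up: turbulence of the $\operatorname{Exh}(\Theta)$-action is not a consequence of tallness alone --- for instance, $\Theta(M)=\sum_{n\in M}2^{-n}$ is tall, yet $E_\Theta = 2^{\I N}\times 2^{\I N}$ and hence not turbulent --- one also needs the orbits to be meager, which here holds because $E_\Theta\subseteq E$ and $E$ is meager by the definition of $\mathfrak{B}$; the analogous caveat applies to $E_\mathcal{Z}$.
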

\begin{proof}
We start by recalling the following well-known facts, see~\cite[Appendix~3.7]{GrebikPhD} and~\cite[Chapter~16]{Kanovei}.
Suppose that $\Theta$ is a tall lsc submeasure and $E_\Theta$ is meager.
Then $E_{\Theta}$ is induced by a turbulent action of a Polish group on $2^\I N$. 
Similarly, if $\mathcal{Z}$ is a non-trivial sequence of finite metric spaces, then $E_{\mathcal{Z}}$ is induced by a turbulent action of a Polish group on $\prod_{m\in \I N} Z_m$.
In particlar, these equivalence relations do not admit classification by countable structures.

Let $E\in \mathfrak{B}$ be an equivalence relation on $Y$.
By the definition we find $F\subseteq E$ such that either $F=E_\Theta$ for some tall lsc submeasure $\Theta$ or $F=E_{\mathcal{Z}}$ for some non-trivial sequence of finite metric spaces $\mathcal{Z}$.
Now we use the fact that turbulent actions are generically $S_\infty$-ergodic.

Suppose for a contradiction that $E$ admits classification by countable structures.
That is, there is a Polish $S_\infty$-space $W$ and a Borel map $\psi:Y\to W$ that is a reduction from $E$ to $E^W_{S_\infty}$.
In particular, $\psi$ is a Borel homomorphism from $F$ to $E^W_{S_\infty}$.
It follows from \cref{thm:GenErg} that there is $y\in Y$ such that $\psi^{-1}([\psi(y)]_{E^W_{S_\infty}})$ is comeager in $Y$.
Since $\psi$ is a reduction we have
$$\psi^{-1}([\psi(y)]_{E^W_{S_\infty}})\subseteq [y]_E.$$
An application of \cite[Theorem~8.41]{kechrisclassical} shows that $E$ is comeager and that is a contradiction.
\end{proof}

\section{$G$-bounded Topology}

In this section we introduce the notion of $G$-bounded topology.
We show that this property is closed downwards in the Borel reducibility order and that essentially countable equivalence relation induced by group actions always admit such a topology.
Our main result is that if a Polish $G$-space admits a (finer) $G$-bounded topology, then $\I E_0^{\I N}$ is not Borel reducible to $E^X_G$.
Results in this section hold for all Polish groups, except for \cref{cor:HjorthKechris}.

Let $\tau$ be a topology on a space $X$ and $A\subseteq X$.
We write $\overline{A}^\tau$ for the $\tau$-closure of $A$ in $X$.
If $\tau$ is understood from the context, we omit the superscript.

\begin{definition}
Let $X$ be a Polish $G$-space and $\tau$ be the underlying $G$-Polish topology on $X$.
We say that $\tau$ is \emph{$G$-bounded} if
\begin{itemize}
	\item for every $x\in X$ there is an open neighborhood $\Delta$ of $1_G$ such that $\overline{\Delta\cdot x}^\tau\subseteq [x]_{E^X_G}$.
\end{itemize}
Instead of saying that $\tau$ is $G$-bounded $G$-Polish topology, we say simply \emph{bounded $G$-Polish topology}.
\end{definition}

We mentioned in the introduction that if $G$ is a locally compact Polish group, then any $G$-Polish topology on $X$ is $G$-bounded.
Similarly this holds when $G$ is a \emph{countable discrete group}.
Next we discuss the example with the Banach space $\ell_1$ in a greater detail.

\begin{example}
Let $\ell_1$ be the Banach space of all absolutely summable real sequences and consider the canonical action $\ell_1\curvearrowright \I R^{\I N}$ that is given by coordinate-wise summation.
Then the product topology turns $\I R^{\I N}$ into a Polish $\ell_1$-space.
Let $x,y\in \I R^{\I N}$ and $(a_n)_{n\in \I N}\subseteq \ell_1$ be such that $\|a_n\|_1\le 1$ for every $n\in \I N$ and $a_n\cdot x\to y$.
Fix $N\in \I N$, then we have
$$\sum_{k=0}^N |x(k)-y(k)|\le\sum_{k=0}^N |x(k)+a_n(k)-y(k)|+\sum_{k=0}^N |a_n(k)|\le$$
$$\le \sum_{k=0}^N |x(k)+a_n(k)-y(k)|+1\to 1$$
as $N\to \infty$.
This shows that $\|x-y\|_1\le 1$ and therefore we see that the product topology is $\ell_1$-bounded.
\end{example}

Next, we show that the existence of a $G$-bounded topology is closed downwards in the Borel reducibility order.
In the proof we use several technical but elementary results that are collected in \cref{app:technical}

\begin{theorem}\label{th:bounded downward closed}
Let $X$ be a Polish $G$-space and $Y$ be a Polish $H$-space such that $E^X_G$ is Borel, $E^Y_H\le_B E^X_G$ and the Polish topology on $X$ is $G$-bounded.
Then there is a finer $H$-Polish topology $\tau$ on $Y$ that is $H$-bounded.
\end{theorem}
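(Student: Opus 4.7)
The plan is to pull back the $G$-bounded topology $\sigma$ on $X$ to a suitable Polish $H$-topology $\tau$ on $Y$ via the Borel reduction $f : Y \to X$. The construction proceeds in two stages.

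First, I would use the technical results collected in \cref{app:technical}, in the spirit of Becker--Kechris-style refinements, to produce a Polish topology $\tau$ on $Y$ that is finer than the given one, turns $Y$ into a Polish $H$-space, and makes $f : (Y, \tau) \to (X, \sigma)$ continuous. This is the routine part: one enlarges the topology to include the $f$-preimages of a countable $\sigma$-base and then applies the standard ``make countably many Borel sets open'' argument while preserving both the Polish property and continuity of the $H$-action.

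The substantive step is verifying that $\tau$ is $H$-bounded. Fix $y \in Y$ and set $x = f(y)$. By $G$-boundedness of $\sigma$, choose an open neighborhood $\Delta_G$ of $1_G$ with $\overline{\Delta_G \cdot x}^\sigma \subseteq [x]_{E^X_G}$. Since $E^X_G$ is Borel, every orbit is $G_\delta$ in $X$, and Effros' theorem gives that the orbit map $G \to [x]_{E^X_G}$ is open onto the orbit with its subspace topology. Hence $\Delta_G \cdot x$ is relatively open in $[x]_{E^X_G}$, and I can extract a $\sigma$-open set $V \subseteq X$ with $x \in V$ and $V \cap [x]_{E^X_G} \subseteq \Delta_G \cdot x$. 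The map $h \mapsto f(h \cdot y)$ is $\tau$-to-$\sigma$ continuous and sends $1_H$ to $x$, so there is an open $\Delta_H \ni 1_H$ with $f(\Delta_H \cdot y) \subseteq V$. Because $f$ is a reduction, $f(\Delta_H \cdot y) \subseteq [x]_{E^X_G}$, and combining gives $f(\Delta_H \cdot y) \subseteq V \cap [x]_{E^X_G} \subseteq \Delta_G \cdot x$. By $\tau$-continuity of $f$,
$$f\bigl(\overline{\Delta_H \cdot y}^{\,\tau}\bigr) \subseteq \overline{f(\Delta_H \cdot y)}^{\,\sigma} \subseteq \overline{\Delta_G \cdot x}^{\,\sigma} \subseteq [x]_{E^X_G},$$
and since the reduction property yields $f^{-1}([x]_{E^X_G}) = [y]_{E^Y_H}$, we conclude $\overline{\Delta_H \cdot y}^{\,\tau} \subseteq [y]_{E^Y_H}$, which is exactly $H$-boundedness at $y$.

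The main subtlety is the application of Effros' theorem in the second stage. $G$-boundedness produces the set $\Delta_G \cdot x$, a subset of the orbit that is typically \emph{not} $\sigma$-open in $X$, so continuity of $h \mapsto f(h \cdot y)$ alone cannot pull it back to an $H$-neighborhood of $1_H$. Effros' theorem, enabled by the Borelness of $E^X_G$, upgrades $\Delta_G \cdot x$ to a relatively open subset of the orbit, from which a genuine $\sigma$-open $V \subseteq X$ trapped between $x$ and $\Delta_G \cdot x$ on the orbit can be extracted. This is the critical bridge between the two topologies, and without the hypothesis that $E^X_G$ is Borel this step would fail.
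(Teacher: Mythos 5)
There is a decisive error in the step you yourself flag as the ``critical bridge.'' Borelness of $E^X_G$ does \emph{not} imply that the orbits are $G_\delta$, so Effros' theorem does not apply in the way you need it. Effros requires the orbit to be non-meager in itself (equivalently $G_\delta$ in $X$, equivalently locally closed), which is much stronger than Borelness of the full equivalence relation. The paper's own motivating example refutes your claim: for $\ell_1 \curvearrowright \I R^{\I N}$ the relation $\I E_2$ is Borel and the product topology is $\ell_1$-bounded, yet every orbit (e.g.\ $\ell_1$ itself) is a dense, meager, $F_\sigma$ subset of $\I R^{\I N}$, hence not $G_\delta$, and the orbit map is not open. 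So in general there is no $\sigma$-open $V$ with $x\in V$ and $V\cap [x]_{E^X_G}\subseteq \Delta_G\cdot x$, and your extraction of the neighborhood $\Delta_H$ collapses. The paper avoids Effros entirely: \cref{lm:local continuity} produces, for each open $\Delta\ni 1_G$, a comeager set of $h\in H$ with $\varphi(h\cdot y)\in \Delta\cdot\varphi(y)$ by a Baire-category argument against the open cover $\bigcup_m \tilde{\Delta}\cdot g_m\cdot x = [x]_{E^X_G}$, with no openness of the orbit map required.

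The first stage is also not routine. You propose to add $f$-preimages of a countable $\sigma$-base to the topology on $Y$ and then invoke a change-of-topology argument that keeps the $H$-action continuous. But the Becker--Kechris refinement applies to $H$-\emph{invariant} Borel sets; the sets $f^{-1}(U)$ are generally not $H$-invariant, and making a non-invariant Borel set open in an $H$-Polish topology forces all of its $H$-translates to be open as well, which can yield uncountably many pairwise disjoint nonempty open sets and so violate second countability. What the paper actually proves (\cref{lm:global continuity}) is a finer $H$-Polish topology $\tau$ together with a Borel $H$-lg comeager set $C\subseteq Y$ such that $\varphi\upharpoonright C$ is $\tau$-continuous; boundedness is then extracted from this comeager version via \cref{lm:weak boundedness}, a step with no analogue in your argument. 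Even after fixing the Effros step, you would have to rework the entire verification modulo a comeager set rather than on all of $Y$.
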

\begin{proof}
Let $\varphi:Y \to X$ be a Borel reduction from $E^Y_H$ to $E^X_G$.
It is easy to see that the assumptions of \cref{lm:local continuity} and \cref{lm:global continuity} are satisfied.
Let $\tau$ be the finer Polish topology on $Y$ that is given by \cref{lm:global continuity} and $C\subseteq Y$ be a Borel $H$-lg comeager set that satisfies conclusions of both \cref{lm:local continuity} and \cref{lm:global continuity}.
We show that $\tau$ works as required.

Let $y\in C$ and $\Delta$ be an open neighborhood of $1_G$ such that 
$$\overline{\Delta\cdot \varphi(y)}\subseteq [\varphi(y)]_{E^X_G}.$$
\cref{lm:local continuity} then gives an open neighborhood $\Delta'$ of $1_H$ such that
$$\varphi(C\cap \Delta'\cdot y)\subseteq \Delta\cdot \varphi(y).$$
Let $z\in \overline{C\cap \Delta'\cdot y}^\tau\cap C$.
By the definition we find $y_n\in C\cap \Delta'\cdot y$ such that $y_n\to_\tau z$.
By \cref{lm:global continuity} we have that $\varphi(y_n)\to \varphi(z)$ because $z\in C$.
Note that $\varphi(y_n)\in \Delta\cdot \varphi(y)$.
This gives that $\varphi(z)\in [\varphi(y)]_{E^X_G}$.
Since $\varphi$ is a reduction we have $(y,z)\in E^Y_H$.
We see that $\tau$ and $C$ satisfies the assumption of \cref{lm:weak boundedness} and therefore $\tau$ is $H$-bounded.
\end{proof}

\begin{corollary}\label{cor:EC implies bounded}
Let $X$ be a Polish $G$-space such that $E^X_G$ is essentially countable.
Then there is a finer $G$-Polish topology $\tau$ on $X$ that is $G$-bounded.
\end{corollary}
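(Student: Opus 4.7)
The plan is to derive \cref{cor:EC implies bounded} as an immediate application of \cref{th:bounded downward closed} combined with standard structural results for countable Borel equivalence relations. Since $E^X_G$ is essentially countable, by definition there exist a standard Borel space $Z$, a countable Borel equivalence relation $F$ on $Z$, and a Borel reduction from $E^X_G$ to $F$. By the Feldman--Moore theorem, $F = E^Z_\Gamma$ for some Borel action of a countable (discrete) group $\Gamma$ on $Z$, and by Becker--Kechris we may fix a compatible Polish topology on $Z$ making the $\Gamma$-action continuous, so that $Z$ becomes a Polish $\Gamma$-space.

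Next I would verify that this Polish $\Gamma$-space is automatically $\Gamma$-bounded. Since $\Gamma$ carries the discrete topology, the singleton $\{1_\Gamma\}$ is an open neighborhood of the identity, and for every $z \in Z$ we have $\{1_\Gamma\}\cdot z = \{z\}$, whose closure in any Hausdorff topology is just $\{z\} \subseteq [z]_{E^Z_\Gamma}$. Thus the witness $\Delta = \{1_\Gamma\}$ works uniformly for all $z \in Z$.

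Finally, I would apply \cref{th:bounded downward closed} with the roles swapped: take the ``$X$'' of that theorem to be our $Z$ (with group $\Gamma$ and the $\Gamma$-bounded topology just produced), and take the ``$Y$'' of that theorem to be our $X$ (with group $G$). The required hypothesis $E^X_G \le_B E^Z_\Gamma$ holds by construction, and $E^Z_\Gamma$ is Borel since every countable Borel equivalence relation is Borel. The theorem then yields a finer $G$-Polish topology $\tau$ on $X$ that is $G$-bounded, which is exactly what the corollary claims. There is no real obstacle here beyond correctly invoking Feldman--Moore and Becker--Kechris to convert ``essentially countable'' into the countable-group-action setting where $G$-boundedness is tautological.
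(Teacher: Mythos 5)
Your proposal is correct and follows exactly the route the paper intends: the corollary is placed directly after \cref{th:bounded downward closed}, and the paper has already observed in the preceding discussion that a countable discrete group acting continuously on a Polish space automatically yields a $G$-bounded topology (witnessed by $\Delta = \{1_\Gamma\}$). Unwinding ``essentially countable'' via Feldman--Moore and Becker--Kechris into a Borel reduction to $E^Z_\Gamma$ for a countable discrete $\Gamma$, verifying the trivial $\Gamma$-boundedness of $Z$, and then invoking \cref{th:bounded downward closed} with $Z$ in the role of the target space is precisely the argument, and you have checked all the hypotheses (including Borelness of $E^Z_\Gamma$) correctly.
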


Recall that $\I E_0^{\I N}$ is the countable product of $\I E_0$.
Since the latter is induced by the canonical continuous action $2^{<\I N}\curvearrowright 2^{\I N}$, it is not hard to see that the former is induced by the canonical continuous action of $(2^{<\I N})^{\I N}\curvearrowright 2^{\I N\times \I N}$.

\begin{theorem}\label{th:E_3 not bounded}
There is no finer bounded $\left(2^{<\I N}\right)^{\I N}$-Polish topology on $2^{\I N\times \I N}$.
\end{theorem}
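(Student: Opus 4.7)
The plan is to derive a contradiction by showing that the $G$-boundedness of $\tau$ forces $\I E_0^{\I N}$ to be smooth on a comeager invariant Borel subset of $2^{\I N \times \I N}$, which is ruled out by the generic ergodicity of $\I E_0^{\I N}$.

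Suppose, for contradiction, that $\tau$ is a finer $G$-Polish topology on $X = 2^{\I N \times \I N}$ (with $G = (2^{<\I N})^{\I N}$) that is $G$-bounded, and let $\sigma$ denote the original product topology. Applying the $G$-bounded property at $0 \in X$, there is a basic open neighborhood $V = \{g \in G : g(i) = \emptyset \text{ for all } i \leq N\}$ of $1_G$ with $\overline{V \cdot 0}^{\tau} \subseteq [0]_{E^X_G}$. Set $A := V \cdot 0$ and $B := \{y \in X : y(i, \cdot) = 0 \text{ for all } i \leq N\}$; a direct computation gives $A = B \cap [0]_{E^X_G}$, $\overline{A}^{\sigma} = B$, and $B$ is $\sigma$-closed (hence $\tau$-closed). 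Since $\tau$ refines $\sigma$, $\overline{A}^{\tau} \subseteq \overline{A}^{\sigma} \cap [0]_{E^X_G} = A$, so $A$ is $\tau$-closed. The same reasoning shows that for any $y \in B$ for which the $G$-bounded property holds with the neighborhood $V$, the orbit slice $V \cdot y = [y]_{E^X_G} \cap B$ is $\tau$-closed in $B$.

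For $k \in \I N$ set $V_k := \{g \in G : g(i) = \emptyset \text{ for all } i \leq k\}$; the Borel sets $B_k := \{y \in B : \overline{V_k \cdot y}^{\tau} \subseteq [y]_{E^X_G}\}$ are increasing and cover $B$. By Baire category in $(B, \tau|_B)$, some $B_k$ is $\tau$-non-meager in a nonempty $\tau$-open $U \subseteq B$; after enlarging $N$ we may take $k = N$, and replacing $U$ by its (automatically $\tau$-open) $V$-saturation we may assume $U$ is $V$-invariant. A standard Becker--Kechris change-of-topology argument then refines $\tau|_B$ to a Polish topology $\tau'$ on $B$ making $B_N \cap U$ into a $G_\delta$ subset (hence Polish in subspace) while preserving continuity of the $V$-action. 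On $B^\dagger := B_N \cap U$ with $\tau'$-subspace topology, the continuous action $V \curvearrowright B^\dagger$ has every orbit $\tau'$-closed, so by Effros' theorem (\cite[Theorem~4.1.10]{Gao}), the orbit equivalence $E^X_G \upharpoonright B^\dagger$ is smooth.

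To derive a contradiction, identify $(B, \sigma|_B)$ with $(2^{\I N \times \I N}, \sigma)$ via the obvious homeomorphism shifting row indices; the $V$-orbit equivalence on $B$ transfers to $\I E_0^{\I N}$ on $2^{\I N \times \I N}$. By Kuratowski's result that Polish topologies $\tau \supseteq \sigma$ with the same Borel structure agree on a $\sigma$-comeager $G_\delta$ set, the $\tau$-comeager subset $B^\dagger$ of $B$ is also $\sigma$-comeager, so it corresponds to a $\sigma$-comeager $\I E_0^{\I N}$-invariant Borel subset of $2^{\I N \times \I N}$ on which $\I E_0^{\I N}$ would be smooth. But any Borel $\I E_0^{\I N}$-invariant reduction to equality on such a subset would be constant on a comeager subset by generic ergodicity of $\I E_0^{\I N}$, while all $\I E_0^{\I N}$-classes are meager; this is the sought contradiction. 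The principal obstacle is the change-of-topology step, which requires careful preservation of both $V$-invariance and Polish structure, but is a well-established technique in this setting.
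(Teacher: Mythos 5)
Your strategy is genuinely different from the paper's: you aim to show that $G$-boundedness would force $\I E_0^{\I N}$ to be smooth on a comeager invariant Borel set (via Effros' theorem applied to the $V$-action with $\tau$-closed orbits, then contradicting generic ergodicity), whereas the paper directly exhibits two points $(y,y_0^+)$, $(y,y_1^+)$ in the $\tau$-closure of a $G_k$-orbit that are not $\I E_0^{\I N}$-equivalent. The paper's route is more elementary and avoids Effros and change of topology entirely, instead using Kuratowski's theorem to translate between $\tau$-closure and $\sigma$-closure on a comeager set where the two topologies coincide.

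There are, however, two genuine gaps in your argument. First, you declare the sets $B_k = \{y\in B : \overline{V_k\cdot y}^{\tau}\subseteq[y]_{E^X_G}\}$ to be Borel and then invoke a Becker--Kechris change of topology to make $B_N\cap U$ into a $G_\delta$ set. But the natural complexity of $B_k$ is $\Pi^1_1$: the condition is $\forall y\,(y\in\overline{V_k\cdot x}^{\tau}\Rightarrow (x,y)\in\I E_0^{\I N})$, where $y\in\overline{V_k\cdot x}^{\tau}$ is an analytic relation (a countable intersection of conditions involving $\exists\alpha\in V_k$ over a Polish group), so $B_k$ is co-analytic rather than Borel — this is exactly what the paper establishes for its analogous $D_k$ in \cref{cl:co-analytic}. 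Co-analyticity suffices for the Baire-category localization (Baire property), but it does not suffice to make $B_N\cap U$ a $G_\delta$ by topology change without further argument, and any Borel $G_\delta$ you shrink to is no longer $V$-invariant, which you need for the $V$-action on $B^\dagger$ to even be defined, let alone for Effros.

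Second, the category bookkeeping at the end is off. You obtain $B_N$ $\tau$-nonmeager in a $\tau$-open $U\subseteq B$ and set $B^\dagger = B_N\cap U$; this is at best comeager in $U$ (after shrinking $U$), not ``$\tau$-comeager in $B$'' as you assert. Moreover, passing from ``$\tau$-comeager in the $\tau$-open set $U$'' to ``$\sigma$-comeager'' is not an immediate consequence of Kuratowski's theorem: $U$ need not be $\sigma$-open, and one must first intersect with the comeager $G_\delta$ on which $\tau$ and $\sigma$ coincide and carry the category estimates carefully across that restriction. (The paper handles precisely this translation in the step $C\cap\overline{G_k\cdot x}=C\cap\overline{G_k\cdot x}^{\tau}$ after taking a Vaught-transformed comeager set $C$.) Fixing both issues would require substantial reworking — essentially reintroducing the paper's $C$, at which point the Effros detour offers no savings over the direct contradiction the paper produces.
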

\begin{proof}
First we introduce an auxiliary notation.
Set
$$G_k=\left\{\alpha\in (2^{<\I N})^{\I N}:\forall l<k \ \alpha(l)=1_{2^{<\I N}}\right\}$$
for every $k\in \I N$.
Then it is easy to see that $(H_k)_{k\in \I N}$ is an open basis of $1_{(2^{<\I N})^{\I N}}$ made of clopen subgroups.

Suppose that $\tau$ is a finer bounded $(2^{<\I N})^{\I N}$-Polish topology on $2^{\I N\times \I N}$.
Define
$$D_k=\left\{x\in 2^{\I N\times \I N}: \overline{G_k\cdot x}^\tau\subseteq [x]_{\I E_0^{\I N}}\right\}.$$
Note that $(D_k)_{k\in \I N}$ is an increasing sequence and we have $2^{\I N\times \I N}=\bigcup_{k\in \I N} D_k$.

\begin{claim}\label{cl:co-analytic}
The set $D_k$ is co-analytic for every $k\in \I N$.
\end{claim}
\begin{proof}
Fix an open basis $(U_r)_{r\in \I N}$ of $\tau$.
Then we have
$$x\in D_k \ \Leftrightarrow \ \forall \alpha\in G_k \ \forall y\in 2^{\I N\times \I N}
\left(\left(\exists r\in \I N \ y\in U_r \wedge \alpha\cdot x\not \in U_r\right)\vee (x,y)\in \I E_0^{\I N}\right).$$
The formula on the right-hand side is co-analytic because $\I E_0^{\I N}$ is a Borel relation.
\end{proof}

Using \cref{cl:co-analytic} and \cite[Proposition~8.26]{kechrisclassical} we find $k\in \I N$ and a basic open set $O\subseteq 2^{\I N\times \I N}$, in the (canonical) product topology, such that $D_k$ is comeager in $O$.
Note that since $D_k$'s are increasing we may assume that the first-coordinates of the indices that define $O$ are strictly less than $k$.

It follows from \cite[Theorem~8.38]{kechrisclassical} that there is a Borel set $C'\subseteq 2^{\I N\times \I N}$ that is comeager in the product topology such that the product topology and $\tau$ coincide on $C'$.
Define
$$C=\{x\in C':\forall^* \alpha\in (2^{<\I N})^{\I N}  \ \alpha\cdot x\in C'\}.$$
Then $C$ is a Borel set by \cite[Theorem~16.1]{kechrisclassical} and a routine use of \cite[Theorem~8.41]{kechrisclassical} shows that $C$ is comeager in the product topology.
Then clearly $C\subseteq C'$ and therefore we have
$$C\cap \overline{C\cap G_k\cdot x}=C\cap \overline{C\cap G_k\cdot x}^\tau.$$
In fact, we have
$$C\cap \overline{G_k\cdot x}=C\cap \overline{G_k\cdot x}^\tau$$
for every $x\in C$ because $G(x,C)$ is comeager, thus dense in $G_k$ (see {\bf (I)} in the proof of \cref{lm:weak boundedness}).

Consider the canonical identification between $2^{\I N\times \I N}$ and $2^{k\times \I N}\times 2^{\I N \times \I N}$.
Another use of \cite[Theorem~8.41]{kechrisclassical} gives $y\in 2^{k\times \I N}$ such that
$$Y_k=\{y^+\in 2^{\I N\times \I N}:(y,y^+)\in O\cap C\cap D_k\}$$
is comeager in $2^{\I N\times \I N}$ with respect to the product topology.
Pick $y^+_0\in Y_k$ and note that the set
$$\{y^+\in 2^{\I N\times \I N}:((y,y^+_0),(y,y^+))\in \I E_0^{\I N}\}$$
is meager.
Therefore there is $y^+_1\in Y_k$ such that $((y,y^+_0),(y,y^+_1))\not\in \I E_0^{\I N}$.
However, we have
$$(y,y^+_1)\in C\cap \overline{G_k\cdot (y,y^+_0)}=C\cap \overline{G_k\cdot (y,y^+_0)}^{\tau}$$
and that is a contradiction.
\end{proof}

\begin{corollary}\label{cor:bounded implies no E_3}
Let $X$ be a Polish $G$-space such that $E^X_G$ is Borel.
Suppose that $X$ admits a bounded $G$-Polish topology $\tau$.
Then $\I E_0^{\I N}\not \le_B E^X_G$. 
\end{corollary}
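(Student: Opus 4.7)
The plan is to derive this corollary as an immediate consequence of Theorems \ref{th:bounded downward closed} and \ref{th:E_3 not bounded}. The bridge between the two is the observation, already recorded in the paragraph preceding Theorem \ref{th:E_3 not bounded}, that $\I E_0^{\I N}$ is itself realized as an orbit equivalence relation: it coincides with $E^{2^{\I N\times\I N}}_{(2^{<\I N})^{\I N}}$ for the canonical continuous action of the Polish group $H=(2^{<\I N})^{\I N}$ on $Y = 2^{\I N\times\I N}$, and in particular $\I E_0^{\I N}$ is Borel and the product topology on $Y$ is an $H$-Polish topology.

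The argument then is a short contradiction. I would first assume, towards a contradiction, that $\I E_0^{\I N} \le_B E^X_G$, and rewrite this as $E^Y_H \le_B E^X_G$ using the identification above. The hypotheses of Theorem \ref{th:bounded downward closed} are then met: $E^X_G$ is Borel by the assumption of the corollary, the bounded $G$-Polish topology $\tau$ on $X$ is given by assumption, and the reduction we just postulated is a Borel reduction from $E^Y_H$ to $E^X_G$. Applying that theorem yields a finer $H$-Polish topology on $Y = 2^{\I N\times\I N}$ which is $H$-bounded.

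Second, I would invoke Theorem \ref{th:E_3 not bounded}, which asserts precisely that no such finer bounded $(2^{<\I N})^{\I N}$-Polish topology on $2^{\I N\times\I N}$ exists. This is an immediate contradiction, so the initial assumption fails, and $\I E_0^{\I N} \not\le_B E^X_G$.

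There is essentially no obstacle in this argument; all of the heavy lifting has already been done in the proofs of Theorems \ref{th:bounded downward closed} and \ref{th:E_3 not bounded}. The only thing one must be careful about is making sure that the action of $(2^{<\I N})^{\I N}$ on $2^{\I N\times\I N}$ really does induce $\I E_0^{\I N}$ and is continuous for the product topology so that Theorem \ref{th:bounded downward closed} applies verbatim, but this is the standard presentation of $\I E_0^{\I N}$ as a Polish group orbit equivalence relation and is the very setup used in Theorem \ref{th:E_3 not bounded}.
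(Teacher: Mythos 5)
Your proposal is correct and is essentially the paper's own proof: the paper derives the corollary by contradiction from \cref{th:bounded downward closed} and \cref{th:E_3 not bounded} in exactly the way you describe, after identifying $\I E_0^{\I N}$ with the orbit equivalence relation of $(2^{<\I N})^{\I N}\curvearrowright 2^{\I N\times\I N}$.
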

\begin{proof}
If $\I E_0^{\I N}\le_B E^X_G$, then $2^{\I N\times \I N}$ admits a finer bounded $(2^{<\I N})^{\I N}$-Polish topology by \cref{th:bounded downward closed}.
That contradicts \cref{th:E_3 not bounded}.
\end{proof}

Hjorth and Kechris have shown that if $G$ is a non-archimedean\footnote{A topological group is non-archimedean if it admits a base at the identity made of clopen subgroups.} tsi Polish group, $X$ is a Polish $G$-space such that $E^X_G$ is a Borel equivalence relation, then either $E^X_G$ is essentially countable or $\I E_0^{\I N}\le_B E^X_G$ , see~\cite[Theorem~8.1]{HjorthKechris}.
This gives immediately.

\begin{corollary}\label{cor:HjorthKechris}
Let $G$ be a tsi non-archimedean Polish group and $X$ be a Polish $G$-space such that $E^X_G$ is Borel.
Then the following are equivalent:
\begin{itemize}
	\item $E^X_G$ is essentially countable,
	\item there is a finer bounded $G$-Polish topology on $X$,
	\item $\I E_0^{\I N}\not\le_B E^X_G$.
\end{itemize}
\end{corollary}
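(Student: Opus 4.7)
The plan is to prove the three equivalences by closing a short cycle, since each implication is essentially already in hand from the earlier results of this section together with the Hjorth--Kechris dichotomy. I would organise things as (1) $\Rightarrow$ (2) $\Rightarrow$ (3) $\Rightarrow$ (1), where (1), (2), (3) denote the three bullets in order.

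First I would handle (1) $\Rightarrow$ (2): if $E^X_G$ is essentially countable, then \cref{cor:EC implies bounded} directly produces a finer $G$-Polish topology on $X$ that is $G$-bounded, with no further work. Next, (2) $\Rightarrow$ (3) is \cref{cor:bounded implies no E_3}: if $\tau$ is a finer bounded $G$-Polish topology on $X$, then assuming $\I E_0^{\I N}\le_B E^X_G$ would force, via \cref{th:bounded downward closed} applied to such a Borel reduction, a finer bounded $(2^{<\I N})^{\I N}$-Polish topology on $2^{\I N\times \I N}$, contradicting \cref{th:E_3 not bounded}.

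For (3) $\Rightarrow$ (1) I would invoke the Hjorth--Kechris dichotomy \cite[Theorem~8.1]{HjorthKechris}, which applies precisely in the setting of this corollary (this is the only place where the non-archimedean hypothesis enters): for $G$ tsi non-archimedean and $E^X_G$ Borel, either $E^X_G$ is essentially countable or $\I E_0^{\I N}\le_B E^X_G$. Taking the contrapositive, failure of $\I E_0^{\I N}\le_B E^X_G$ yields essential countability of $E^X_G$, closing the cycle.

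There is no real obstacle here: the corollary is a clean packaging of \cref{cor:EC implies bounded}, \cref{cor:bounded implies no E_3}, and \cite[Theorem~8.1]{HjorthKechris}, and the two new implications (1) $\Rightarrow$ (2) and (2) $\Rightarrow$ (3) hold for all Polish groups. The only hypothesis that is specific to this statement is non-archimedeanness, and it is used solely to invoke Hjorth--Kechris; indeed, the main results of the paper (\cref{thm:CCS=IC} and \cref{thm:E_3dichotomy}) can be read as the extension of this corollary to the full class of tsi Polish groups, once the additional assumption of classification by countable structures is imposed.
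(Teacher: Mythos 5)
Your proof is correct and follows exactly the route the paper intends: the paper presents this corollary as an immediate consequence of \cref{cor:EC implies bounded}, \cref{cor:bounded implies no E_3}, and the Hjorth--Kechris dichotomy \cite[Theorem~8.1]{HjorthKechris}, which is precisely your cycle (1) $\Rightarrow$ (2) $\Rightarrow$ (3) $\Rightarrow$ (1). Your remark that non-archimedeanness enters only through Hjorth--Kechris also matches the paper's framing.
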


\section{Property (IC)}\label{sec:PropIC}

We define a combinatorial property that characterizes classification by countable structures for equivalence relations induced by actions of tsi Polish groups, this is proved later in \cref{sec:ProofOfCCS}.
Similar property was considered, e.g., in \cite[Chapter~15.2]{Kanovei} under the name ``Grainy sets''.
This property, as well as classification by countable structures, does not depend on the underlying Polish topology on the space $X$, i.e., it depends only on the Borel $\sigma$-algebra.
We note that every action of \emph{non-archimedean} Polish group trivially satisfies this property.

Recall that the dihypergraphs $\fH_{k,m}$ are defined in \cref{def:dihyp}.

\begin{definition}[Property (IC)]
Let $X$ be a Polish $G$-space and $B\subseteq X$ be a $G$-invariant Borel set.
We say that $B$ satisfies \emph{Property (IC)} if there is a sequence of Borel sets $(A_{k,l})_{k,l\in\I N}$ such that
\begin{itemize}
    \item for every $k,l\in \I N$ there is $m(k,l)\in \I N$ such that $A_{k,l}$ is $\fH_{k,m(k,l)}$-independent,
    \item $B=\bigcup_{l\in \I N} A_{k,l}$ for every $k\in \I N$.
\end{itemize}
We say that the Polish $G$-space $X$ or the equivalence relation $E^X_G$ satisfy Property (IC) if $X$ satisfies property (IC).
\end{definition}

We start by showing that Property (IC) is orthogonal to turbulence for actions of any Polish group.

\begin{theorem}\label{th:turbulence vs IC}
Let $X$ be a Polish $G$-space that satisfies Property (IC).
Then the action is not turbulent.
\end{theorem}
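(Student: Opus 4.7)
I will argue by contradiction: suppose $(A_{k,l})_{k,l\in\I N}$ witnesses Property (IC) and yet $G\curvearrowright X$ is turbulent. Fixing some $k$, and using the covering $X=\bigcup_l A_{k,l}$ together with Baire category on the Polish space $X$, I pick $l_0$ and a non-empty open $U\subseteq X$ in which $A:=A_{k,l_0}$ is comeager. Write $m=m(k,l_0)$, so that $A$ is $\fH_{k,m}$-independent. I fix a Baire-generic $x\in A\cap U$. My goal is to produce a $\Delta_m$-jump path $(x=y_0,y_1,\ldots,y_n)$ lying entirely in $A$ with $(y_0,y_n)\notin R^X_{\Delta_k}$; such a path is an $\fH_{k,m}$-edge inside $A$, contradicting its independence.

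I will parametrize candidate paths by tuples $\vec h=(h_0,\ldots,h_{n-1})\in\Delta_m^n$ via $y_{i+1}=h_i\cdot y_i$, and extract $\vec h$ with the required properties via the conjunction of two category arguments. First, by continuity of the action, ``the whole path stays in $U$'' is an open condition defining a non-empty open set $\fP_n\subseteq\Delta_m^n$. An iterated Kuratowski--Ulam argument — using that $A$ is locally comeager at $x$ by genericity, hence the map $h\mapsto h\cdot y$ sends a comeager set of $h\in\Delta_m$ into $A$ whenever $y$ is itself Baire-generic in $A\cap U$ — shows that $\fA_n=\{\vec h\in\fP_n:y_i\in A\text{ for all }i\leq n\}$ is comeager in $\fP_n$. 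Second, by turbulence, the local orbit $\fO(x,U,\Delta_m)$ is dense in some non-empty open $U_1\subseteq U$, and one argues that for some $n$ the set $\fB_n=\{\vec h\in\fP_n:g_n\cdot x\notin\Delta_k\cdot x\}$, where $g_n=h_{n-1}\cdots h_0$, is non-empty and open in $\fP_n$. Intersecting the comeager $\fA_n$ with the non-empty open $\fB_n$ yields a tuple $\vec h$ producing the required $\Delta_m$-jump path inside $A$ with endpoints not $\Delta_k$-related.

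The main obstacle is showing that $\fB_n$ is non-empty for some $n$ — that is, that turbulence forces some $\Delta_m$-jump path from $x$ staying in $U$ to escape $\Delta_k\cdot x$. The set $\Delta_k\cdot x$ is analytic and contained in the meager orbit $[x]_{E^X_G}$, hence meager; density of $\fO(x,U,\Delta_m)$ in $U_1$ by itself does not preclude the inclusion $\fO(x,U,\Delta_m)\subseteq\Delta_k\cdot x$, since meager sets can be dense. Instead, one must use the Baire property of the analytic set $\Delta_k\cdot x$: were $\Delta_k\cdot x$ to be dense throughout the open region where $\fO$ concentrates, one could, by a Kuratowski--Ulam argument in $\fP_n$, upgrade this density to non-meagerness in some open subset, contradicting meagerness of the orbit. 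Making this precise — carefully combining the somewhere-denseness of $\fO(x,U,\Delta_m)$, the meagerness of orbits, and the analytic structure of $\Delta_k\cdot x$ as the continuous image of $\Delta_k$ under $g\mapsto g\cdot x$ — is where the bulk of the technical work of the proof lies.
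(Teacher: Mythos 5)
The core difficulty you flag at the end — that somewhere-denseness of the local orbit does not by itself force an escape from $\Delta_k\cdot x$, since that set could be dense and meager — is a genuine obstacle, and your proposed resolution does not close it. You suggest upgrading density of $\Delta_k\cdot x$ to non-meagerness via a Kuratowski--Ulam argument, but this cannot work: $\Delta_k\cdot x$ is contained in the orbit, which turbulence forces to be meager, so $\Delta_k\cdot x$ is meager, full stop. Being dense in an open set is perfectly compatible with being meager there, so no contradiction can be squeezed out of density alone. What is missing is a way to make $\Delta_k\cdot x$ (or rather its \emph{closure}) \emph{nowhere dense}, not merely meager. This is exactly what the paper arranges and your sketch omits: you fix $k$ arbitrarily at the outset, whereas in the paper's proof $k$ is chosen \emph{after} covering the meager orbit $G\cdot x$ by closed nowhere dense sets $(F_r)$; by Baire applied in the group, some $G(x,F_r)$ has interior, yielding $g$ and $k$ with $\Delta_k\cdot g\cdot x\subseteq F_r$, hence $\overline{\Delta_k\cdot y}\subseteq F_r$ for $y=g\cdot x$. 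With that, the paper shows the local orbit $\fO(y,U,\Delta_{m(k,l)})$ lands inside the closed set $\overline{\Delta_k\cdot y}\subseteq F_r$ (by perturbing a jump-path into $D'\cap A_{k,l}$ and invoking independence), and being somewhere dense while trapped in a nowhere dense $F_r$ is the contradiction. Without the $F_r$ localization there is simply no reason for the jump-paths to escape $\Delta_k\cdot x$.

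Two smaller issues. First, $\fB_n$ cannot be open: $\Delta_k\cdot x$ is analytic, so the relevant preimage is co-analytic, and you would need non-meagerness (via the Baire property) rather than openness to intersect it with the comeager $\fA_n$. Second, your Kuratowski--Ulam claim that ``$h\mapsto h\cdot y$ sends a comeager set of $h\in\Delta_m$ into $A$ whenever $y$ is Baire-generic in $A\cap U$'' is not automatic from genericity of $y$ in $X$; it is a Vaught-transform statement, and iterating it along the path requires the intermediate points $y_i$ to remain in a set on which the transform behaves well. The paper builds this in explicitly via the comeager set $D'=\{x\in D:\forall^* g\ g\cdot x\in D\}$ (with $D$ chosen so that $A_{k,l}\cap D$ is relatively open in $D$), which is what makes the perturbation of a jump-path into $A_{k,l}$ go through cleanly. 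Your overall strategy — produce an $\fH_{k,m}$-edge inside an independent set by a category argument on jump-tuples — is a reasonable reformulation, but as written it is missing the decisive step.
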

\begin{proof}
Suppose that the action is turbulent.
Let $D\subseteq X$ be a Borel comeager set such that $A_{k,l}\cap D$ is relatively open in $D$ for every $k,l\in \I N$.
This can be done using \cite[Proposition~8.26]{kechrisclassical}.
It follows from \cite[Theorem~16.1]{kechrisclassical} and \cite[Theorem~8.41]{kechrisclassical} that
$$D'=\{x\in D:\forall^* g\in G \ g\cdot x\in D\}$$
is a Borel comeager subset of $X$.

Pick $x\in D'$.
Note that $G(x,D')$ is comeager in $G$.
We show that $G\cdot x=[x]_{E^X_G}$ is nonmeager.
Suppose that $G\cdot x$ is meager.
Then there are closed nowhere dense sets $(F_r)_{r\in \I N}$ such that $G\cdot x\subseteq \bigcup_{r\in \I N}F_r$.
Note that $G(x,F_r)$ is closed for every $r\in \I N$ and $G=\bigcup_{r\in \I N} G(x,F_r)$.
By \cite[Proposition~8.26]{kechrisclassical} there is an index $r\in \I N$ such that $G(x,F_r)$ contains an open set.
This implies that there is $g\in G$ and $k\in\I N$ such that $\Delta_k\cdot g\subseteq G(x,F_r)$ and $y=g\cdot x\in D'$.
Let $l\in \I N$ such that $y\in A_{k,l}$.
Note that
$$\overline{\Delta_k\cdot y}=\overline{\Delta_k\cdot g\cdot x}\subseteq F_r$$
because $F_r$ is closed.

Use the definition of $D$ to find an open set $U$ such that $U\cap D'=A_{k,l}\cap D'$.
Consider the local orbit $\mathcal{O}(y,U,\Delta_{m(k,l)})$ and pick $z\in \mathcal{O}(y,U,\Delta_{m(k,l)})$.
By the definition, there is $w\in U^{<\I N}$ such that $w_0=y$, $w_{|w|-1}=z$ and $(w_i,w_{i+1})\in R^X_{\Delta_{m(k,l)}}$ for every $i<|z|-1$.
Let $P\subseteq X$ be an open neighborhood of $z$.
Note that $G(y,U)$, $G(y,P)$ are open and $G(y,D')$ is comeager, in particular, dense in $G(y,U)$.
Therefore we can find a sequence $w'\in U^{<\I N}$ such that $|w|=|w'|$, $w'_0=y$, $w'_i\in U\cap D'$ for every $i<|w'|$, $(w'_i,w'_{i+1})\in R^X_{\Delta_{m(k,l)}}$ for every $i<|w'|-1$ and $w'_{|w'|-1}\in P$.
Note that we have
$$w'_i\in U\cap D'=A_{k,l}\cap D'\subseteq A_{k,l}$$
for every $i<|w'|$.
The set $A_{k,l}$ is $\mathcal{H}_{k,m(k,l)}$-independent and therefore $(y,w'_{|w'|-1})\in R^X_{\Delta_k}$.
This implies that $\Delta_k\cdot y\cap P\not=\emptyset$ and consequently that
$$\mathcal{O}(y,U,\Delta_{m(k,l)})\subseteq \overline{\Delta_k\cdot y}.$$
Therefore $F_r$ contains an open set by the assumption that the action is turbulent, i.e., $\mathcal{O}(y,U,\Delta_{m(k,l)})$ is somewhere dense.
This shows that $[x]_{E^X_G}$ is nonmeager and that contradicts the definition of turbulence.
\end{proof}

We conclude this section by stating that Property (IC) is a stronger condition than classification by countable structures for tsi Polish groups.

\begin{theorem}\label{th:IC ->CCS}
Let $G$ be a tsi Polish group and $X$ be a Polish $G$-space such that $E^X_G$ is Borel.
Suppose that $E^X_G$ satisfies Property (IC). 
Then $E^X_G$ is classifiable by countable structures.
\end{theorem}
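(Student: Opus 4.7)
My plan is to mimic the classical proof that Borel actions of non-archimedean tsi Polish groups admit classification by countable structures, using Property (IC) as a substitute for the basis of clopen subgroups at the identity.

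First, I would refine the Borel decomposition provided by Property (IC) into a tree-indexed family of Borel sets $(A_s)_{s\in \I N^{<\I N}}$ with $A_\emptyset=X$, where $(A_{s^\frown n})_{n\in \I N}$ is a Borel partition of $A_s$ and each $A_s$ is $\fH_{|s|,m_s}$-independent for some $m_s\ge |s|$ that strictly increases along every branch. This is obtained by iteratively intersecting the given $k$-level partitions with finer ones and raising the hypergraph scale accordingly, using that a finite intersection of $\fH_{k,m}$-independent sets remains $\fH_{k,m}$-independent.

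For each $s$, I would define $\sim_s$ on $A_s$ as the Borel equivalence relation generated by the graph of $\Delta_{m_s}$-steps inside $A_s$; by $\fH_{|s|,m_s}$-independence, every $\sim_s$-class lies inside $\Delta_{|s|}\cdot y$ for any representative $y$. I would then encode each $x\in X$ as a countable structure by fixing once and for all a countable dense sequence $(g_n)_{n\in \I N}\subseteq G$ and setting $M(x)$ to be the structure on the underlying set $\I N$ in a countable relational language $L$, equipped with unary relations $R_s(n)\Leftrightarrow g_n\cdot x\in A_s$ and binary relations $E_s(n,m)\Leftrightarrow g_n\cdot x\sim_s g_m\cdot x$. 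The map $\varphi\colon X\to X_L$ given by $x\mapsto M(x)$ is Borel since each defining relation on the right-hand side is, and the two-sided invariance of $d$ is what guarantees that the action of $G$ on $X$ translates into an approximate relabelling of $\I N$ on the $M$-side, so that $\varphi$ descends to an $S_\infty$-orbit-invariant assignment.

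The main obstacle I expect is the reduction half, namely that $\varphi(x)\cong \varphi(y)$ forces $x\mathrel{E^X_G} y$. The strategy is to use an abstract $L$-isomorphism between $M(x)$ and $M(y)$ to assemble a witnessing $g\in G$: the tree structure of $(A_s)$ together with the $\sim_s$-relations allows one to build a Cauchy sequence of group elements matching $x$ to $y$ at finer and finer scales, whose limit exists by completeness of the two-sided invariant metric. This is precisely where the tsi hypothesis enters decisively, since the two-sided invariance is what ensures that the approximations coming from different branches of the tree can be composed coherently; without it, the Cauchy property would fail, and one could at best reduce to a weaker equivalence (which, as \cref{sec:BASE} indicates, is what happens in genuinely non-tsi or turbulent situations). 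A secondary, more routine difficulty is verifying that the encoding $x\mapsto M(x)$ is genuinely Borel, which is handled by standard parametrized descriptive set theory once the $(A_s)$ are fixed.
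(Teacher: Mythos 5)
Your high-level plan --- mimic the non-archimedean argument, using Property (IC) as a surrogate for a basis of clopen subgroups --- is the right intuition and is essentially what the paper calls the ``elementary proof'' (deferred to the cited thesis). However, as written, the encoding $x\mapsto M(x)$ has a genuine gap, and the paper itself actually takes a much shorter route: it combines \cref{th:turbulence vs IC} (Property (IC) implies the action is not turbulent, and this localizes to any $G$-invariant set since (IC) passes to invariant Borel subsets) with Hjorth's turbulence theorem, which says that a Borel $G$-orbit equivalence relation that is not classifiable by countable structures must be turbulent on some invariant $G_\delta$ set. So the paper's proof avoids constructing any explicit classifying map.

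The gap in your construction is that the unary relations $R_s(n) \Leftrightarrow g_n\cdot x\in A_s$ are not $E^X_G$-invariant up to isomorphism. If $y=h\cdot x$, then for $M(x)\cong M(y)$ you need a permutation $\pi$ of $\I N$ with $g_n\cdot x\in A_s \Leftrightarrow g_{\pi(n)}\cdot y = g_{\pi(n)}h\cdot x\in A_s$ for \emph{every} $s$; since the $A_s$ are merely Borel (not clopen cosets of subgroups), membership in $A_s$ is not stable under replacing $g_n$ by a nearby $g_{\pi(n)}h$, and no density or tsi argument rescues exact equality of Borel-set membership at all scales simultaneously. This is exactly where the non-archimedean case is fundamentally easier: there $A_s$ is a clopen $V_s$-coset, so $g\cdot x\in A_s$ depends only on $gV_s$, and the approximate relabelling becomes exact. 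To make your approach work one must encode something genuinely orbit-invariant --- e.g.\ the countable combinatorial structure of local $\sim_s$-classes meeting the orbit and how they nest across scales --- typically after passing to a Borel $G$-lg comeager set where the $A_{k,l}$ become relatively open (cf.\ \cref{lm:rel open}), so that orbit continuity is available. As proposed, the reduction direction you flag as ``the main obstacle'' is not the only one; the homomorphism direction already fails.
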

\begin{proof}
An elementary proof of this statement follows from \cite[Definition~3.3.6,~Proposition~3.3.7,~Theorem~3.3.8]{GrebikPhD}.

Alternative approach that does not need the assumption that $E^X_G$ is Borel is to appeal to \cite[Theorem~13.18]{KechrisTurbulence} and \cref{th:turbulence vs IC}.
\end{proof}

\section{Proof of \cref{thm:PropICandGbounded}}\label{sec:ProofOfCCS}

We show that Property (IC) together with the existence of bounded $G$-Polish topology implies that the equivalence relation is essentially countable.
Note that it follows from \cref{cor:EC implies bounded} that if a Borel equivalence relation induced by an action of Polish group is essentially countable, then it admits a bounded $G$-Polish topology.
The following is a formal formulation of \cref{thm:PropICandGbounded} for tsi Polish groups.

\begin{theorem}\label{th:IC+bounded=EC}
Let $G$ be a tsi Polish group $X$ be a Polish $G$-space such that $E^X_G$ is Borel.
Then the following are equivalent:
\begin{itemize}
    \item [(A)] $E^X_G$ satisfies Property (IC) and there is a finer bounded $G$-Polish topology on $X$,
    \item [(B)] $E^X_G$ is essentially countable.
\end{itemize}
Moreover, (A) implies (B) for any Polish group $G$.
\end{theorem}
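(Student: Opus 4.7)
The direction $(B)\Rightarrow(A)$ is the easier one. Essential countability supplies a finer $G$-bounded $G$-Polish topology by \cref{cor:EC implies bounded}. Moreover, by the cited equivalence of essential countability and $\sigma$-lacunarity for Borel orbit equivalence relations of Polish group actions, one has a Borel decomposition $X=\bigcup_n X_n$ with each $X_n$ being $V_n$-lacunary for some open neighborhood $V_n$ of $1_G$. Given $k\in\I N$, choose $m_n$ for each $n$ with $\Delta_{m_n}\subseteq V_n$: then $V_n$-lacunarity prohibits any pair of distinct same-orbit points that are $\Delta_{m_n}$-close within $X_n$, so $X_n$ contains no sequence in $\fJ(\Delta_{m_n})$ and is therefore $\fH_{k,m_n}$-independent vacuously. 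This yields Property (IC) with $A_{k,n}=X_n$.

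The main direction is $(A)\Rightarrow(B)$. The plan is to apply Miller's variant of the $\I G_0$-dichotomy (\cref{thm:G_0Ben}) to a suitable increasing-in-$j$ sequence of Borel digraphs $(G_{i,j})_{i,j}$ on $X$ encoding ``in the same $E^X_G$-orbit and $\Delta_j$-close''. In alternative~(i) one obtains a Borel cover $(B_i)_i$ of $X$ such that $G_{i,j}\upharpoonright B_i$ has countable Borel chromatic number for every $j$; combining a countable Borel coloring on each $B_i$ with the Borel decomposition from Property~(IC) produces a countable refinement of $X$ into Borel sets each lacunary with respect to some $\Delta_j$, i.e., a $\sigma$-lacunary Borel partition of $X$, which gives~(B). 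In alternative~(ii) one obtains a continuous homomorphism $\varphi\colon 2^{\I N}\to X$ from $(\I G_{0,k})_k$ to $(G_{k,f(k)})_k$, and the plan is to upgrade $\varphi$, using Property~(IC), into a Borel reduction from $\I E_0^{\I N}$ to $E^X_G$. By \cref{cor:bounded implies no E_3}, the existence of a finer bounded $G$-Polish topology precludes such a reduction, so alternative~(ii) is ruled out and alternative~(i) yields~(B).

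The main obstacle is the upgrade in alternative~(ii): turning the $(\I G_{0,k})_k$-homomorphism $\varphi$ into an $\I E_0^{\I N}$-reduction under only Property~(IC). In \cite{MillerLacunary} this upgrade is carried out using a clopen base of subgroups of the non-archimedean group; here, the sets $A_{k,l}$ from Property~(IC) together with their $\fH_{k,m(k,l)}$-independence substitute for those subgroups, since along the finitely uniformly branching tree supplied by the dichotomy they force, at each level $k$, every $\I E_0$-class to land inside a single $\Delta_k$-ball of its orbit. For the tsi case one can additionally invoke the uniform Borel pseudometric of \cref{pr:Borel pseudo} and \cref{th:uniform nonmeager is everything} to streamline the contradictions that arise during the refinement; for the ``moreover'' clause (any Polish group), the same tree-refinement scheme is carried out combinatorially using only Property~(IC), the $G$-bounded topology, and a fixed decreasing base $(V_k)_k$ of open neighborhoods of $1_G$.
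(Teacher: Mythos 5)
Your direction $(B)\Rightarrow(A)$ is essentially the paper's: bounded topology via \cref{cor:EC implies bounded}, Property~(IC) via $\sigma$-lacunarity, and the observation that a $V_n$-lacunary set is vacuously $\fH_{k,m}$-independent once $\Delta_m\subseteq V_n$ is correct.

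For $(A)\Rightarrow(B)$ your plan diverges sharply from the paper, and it has a genuine gap. The paper does \emph{not} invoke any $\I G_0$-dichotomy here. Instead it gives a direct construction: fix the bounded $G$-Polish topology $\tau$ and the Borel $\fH_{k,m(k,l)}$-independent cover $(A_{k,l})$; on each piece define the ``chain'' equivalence relation $F_{k,l}$ (two points are $F_{k,l}$-equivalent if they are joined by a $\Delta_{m(k,l)}$-jump path inside $A_{k,l}$); after passing to a suitable Borel $G$-lg comeager set $C$ (so that the pieces become relatively $\tau$-open along orbits), each $F_{k,l}$ is a Borel equivalence relation whose classes are $\Delta_k$-bounded and at most countably many per $E^X_G$-orbit; the $G$-boundedness of $\tau$ then guarantees that for every $x$ some $\overline{[x]_{F_{k,x(k)}}}^{\tau}$ stays inside $[x]_{E^X_G}$, and a Borel selector $S$ on $\tau$-closed sets (via \cite[Theorem~12.13]{kechrisclassical}) produces a Borel countable complete section $\bigcup_k p_2(D_k)$. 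This argument uses only Baire-category machinery and a selector, never a dichotomy, which is precisely why the ``moreover'' clause (any Polish group $G$) comes for free.

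Your route applies \cref{thm:G_0Ben} and, in case~(ii), wants to upgrade the $(\I G_{0,k})_k$-homomorphism to a reduction of $\I E_0^{\I N}$ and then contradict \cref{cor:bounded implies no E_3}. The logic is sound in outline, but the upgrade \emph{is} the entire content of the paper's later \cref{thm:E_3dichotomy}, and the first refinement step (\cref{lm:first ref}) is explicitly noted to rely crucially on $G$ being tsi, as it needs conjugacy invariance of $V_k$, $W_k$ to control the gap-comparison between the $\bf p$-branch and arbitrary branches. Your closing remark that ``the same tree-refinement scheme is carried out combinatorially using only Property~(IC), the $G$-bounded topology, and a fixed decreasing base $(V_k)_k$'' for general Polish $G$ is not justified and is not plausible without a substitute for the conjugacy-invariance trick; as written it would not deliver the ``moreover'' clause. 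Even for tsi $G$, your sketch of the upgrade (``the sets $A_{k,l}$ $\ldots$ substitute for those subgroups'') names the right ingredients but does not amount to a proof; you would essentially need to reproduce \cref{lm:first ref}, \cref{lm:second ref}, and the final steps of the proof of \cref{thm:E_3version}, making the argument much heavier and circular-feeling (you would be re-deriving the later theorem in order to prove the earlier one). I recommend replacing this with the selector/chain-closure construction, which is shorter, elementary, and settles the ``moreover'' clause directly.
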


The strategy for showing (A) implies (B) is as follows.
By the assumptions we fix a bounded $G$-Polish topology $\tau$ and a sequence $(A_{k,l})_{k,l\in \I N}$ of Borel $\fH_{k,(m(k,l))}$-independent sets.
We define an equivalence relation $F_{k,l}$ on $A_{k,l}$ as
$$(x,y)\in F_{k,l} \ \Leftrightarrow \ \exists z\in \fJ(\Delta_{m(k,l)})\cap (A_{k,l})^{<\I N} \ z_0=x, \ z_{|z|-1}=y$$
for every $x,y\in A_{k,l}$ and $k,l\in \I N$.
Note that if $x\in A_{k,l}$, then $[x]_{F_{k,l}}\subseteq \Delta_k\cdot x$.

Since $\tau$ is $G$-bounded we find for each $x\in X$ natural numbers $k,l\in \I N$ such that $\overline{[x]_{F_{k,l}}}^\tau\subseteq [x]_{E^X_G}$.
By \cite[Theorem~12.13]{kechrisclassical} there is a Borel selector $S$ that picks from every nonempty $\tau$-closed set one of its elements.
We define
$$x\mapsto \overline{[x]_{F_{k,l}}}^\tau\mapsto S\left(\overline{[x]_{F_{k,l}}}^\tau\right)\in [x]_{E^X_G}.$$
The idea is to show that this is a Borel map with range that is a countable complete section.
This can be done once we pass from $X$ to a suitable Borel $G$-lg comeager set $C$.
Formal proof follows.

\begin{proof}[Proof of \cref{th:IC+bounded=EC} and \cref{thm:PropICandGbounded}]
We start with (B) $\Rightarrow$ (A).
We mentioned above that by \cref{cor:EC implies bounded} we have that (B) implies the existence of a bounded $G$-Polish topology for every Polish group $G$.
To show that (B) implies Property (IC), we need to assume that $G$ is tsi.
In that case, we either use \cref{thm:CCS=ICextended}, or exploit the fact that essential countability is equivalent to $\sigma$-lacunarity \cite{GrebikLacunary} together with \cref{pr:Borel independent}.

Next, we formalize the ideas for (A) $\Rightarrow$ (B) that are sketched before the proof.
We fix a bounded $G$-Polish topology $\tau$ and a sequence $(A_{k,l})_{l\in \I N}$ of $\fH_{k,m(k,l)}$-independent Borel sets.
We may assume that $(A_{k,l})_{l\in \I N}$ are pairwise disjoint for every $k\in\I N$.
We denote as $x(k)$ the unique $l\in \I N$ such that $x\in A_{k,l}$, i.e., $x\in A_{k,x(k)}$ for every $k\in \I N$.
Let $S$ be a Borel selector that assigns to a non-empty $\tau$-closed subset $K\subseteq X$ its element, i.e., a Borel map $K\mapsto S(K)\in K$ from the standard Borel space of $\tau$-closed non-empty subsets to $X$, see~\cite[Theorem~12.13]{kechrisclassical}.
The proof consists of five steps.

{\bf (I)}.
Fix $k\in\I N$ and use \cref{lm:rel open} for the sequence $(A_{k,l})_{l\in \I N}$ to get a Borel $G$-lg comeager set $C_k$.
Define $C=\bigcap_{k\in\I N} C_k$.
Then we have that $C$ is a Borel $G$-lg comeager set and for every $k\in \I N$ and $x\in C\cap A_{k,x(k)}$ there is an open neighborhood $\Delta$ of $1_G$ such that 
$$C\cap \Delta\cdot x\subseteq C_k\cap \Delta\cdot x\subseteq A_{k,x(k)}$$
by \cref{lm:rel open}.

{\bf (II)}.
Let $k,l\in \I N$.
Define the equivalence relation $F_{k,l}$ on $C\cap A_{k,l}$ as
$$(x,y)\in F_{k,l} \ \Leftrightarrow \ \exists z\in \fJ(\Delta_{m(k,l)}) \cap (C\cap A_{k,l})^{<\I N} \ z_0=x, \ z_{|z|-1}=y$$
for every $x,y\in C\cap A_{k,l}$.
We show that $F_{k,l}$ is Borel and that every $E^{X}_G$-class contains at most countably many $F_{k,l}$-classes.

Let $x\in X$ and $y\in [x]_{E^X_G}\cap C\cap A_{k,l}$.
Then $G(x,[y]_{F_{k,l}})$ is relatively open in $G(x,C)$ by the properties of $C$ from {\bf (I)}.
This shows that each $E^X_G$-class contains at most countably many $F_{k,l}$-classes because $G(x,C)$ is a separable space.

Set $R^0$ for the restriction of the relation $R^X_{\Delta_{k,l}}$ to $C\cap A_{k,l}$.
It follows from \cite[Theorem~7.1.2]{BeckerKechris} and the assumption that $E^X_G$ is Borel that $R_{k,l}$ is Borel.
Inductively on $i\in \I N$ define relations $R^i$ on $C\cap A_{k,l}$ as
$$(x,y) \in R^{i+1}\ \Leftrightarrow \ \exists^* g\in G \ (x,g\cdot x)\in R^i \ \wedge \ (g\cdot x,y)\in R^X_{\Delta_{m(k,l)}}.$$
It follows inductively from \cite[Theorem~16.1]{kechrisclassical} that $R=\bigcup_{i\in \I N} R^i$ is a Borel subset of $C\cap A_{k,l}\times C\cap A_{k,l}$.

We show that $F_{k,l}=R$.
It is easy to see that $R\subseteq F_{k,l}$.

\begin{claim}\label{cl:arifical}
Suppose that $(x,y)\in R^i$.
Then there is an open neighborhood $\Delta$ of $1_G$ (that depends on $x,y$ and $i$) such that $(x,h\cdot y)\in R^i$ for every $h\in \Delta$ such that $h\cdot y\in C$.
\end{claim}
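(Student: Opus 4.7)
The plan is to prove the claim by induction on $i \in \I N$, with the inductive step carrying essentially all the content.

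For the base case $i=0$, the assumption $(x,y) \in R^0$ means $y = g_0 \cdot x$ for some $g_0 \in \Delta_{m(k,l)}$ with $x,y \in C \cap A_{k,l}$. Openness of $\Delta_{m(k,l)}$ gives a neighborhood $\Delta''$ of $1_G$ with $\Delta'' \cdot g_0 \subseteq \Delta_{m(k,l)}$, and property {\bf (I)} applied at $y$ supplies a neighborhood $\Delta'$ with $C \cap \Delta' \cdot y \subseteq A_{k,l}$. Then $\Delta = \Delta' \cap \Delta''$ works: if $h \in \Delta$ and $h\cdot y \in C$, then $h\cdot y = (hg_0)\cdot x$ lies in $\Delta_{m(k,l)} \cdot x \cap C \cap A_{k,l}$, giving $(x,h\cdot y) \in R^0$.

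For the inductive step, assume the statement for $R^i$ and fix $(x, y) \in R^{i+1}$, so that $A(y) = \{g : (x, g\cdot x) \in R^i \wedge (g\cdot x, y) \in R^X_{\Delta_{m(k,l)}}\}$ is Borel and non-meager. The inductive hypothesis, applied at each $(x, g\cdot x) \in R^i$ for $g \in A(y)$, yields a neighborhood $\Delta(g)$ of $1_G$ such that $h \in \Delta(g) \wedge hg\cdot x \in C \Rightarrow (x, hg\cdot x) \in R^i$. The central issue is that $\Delta(g)$ varies with $g$, so I will extract a uniform neighborhood by partitioning via the co-analytic sets
$$W_n = \{g \in A(y) : \forall h \in \Delta_n,\ hg\cdot x \in C \Rightarrow (x, hg\cdot x) \in R^i\}.$$
Since every $\Delta(g)$ contains some $\Delta_n$, we have $A(y) = \bigcup_n W_n$, and the Baire property of co-analytic sets forces some $W_{n_0}$ to be non-meager. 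I then claim that $\Delta_{n_0}$ witnesses the conclusion for $R^{i+1}$: for $h \in \Delta_{n_0}$ with $h\cdot y \in C$, I check that $A(h\cdot y)$ is non-meager. After the substitution $g' = hg$, the condition $(hg\cdot x, h\cdot y) \in R^X_{\Delta_{m(k,l)}}$ reduces to $(g\cdot x, y) \in R^X_{\Delta_{m(k,l)}}$ by the conjugacy invariance $h^{-1}\Delta_{m(k,l)}h = \Delta_{m(k,l)}$ --- this is precisely where the tsi hypothesis is used. Combined with the defining property of $W_{n_0}$, this gives $A(h\cdot y) \supseteq h \cdot \{g \in W_{n_0} : hg\cdot x \in C\}$. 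Because $G(x, C)$ is comeager by the $G$-lg comeagerness of $C$, so is $h^{-1}\cdot G(x, C)$, and intersecting with the non-meager $W_{n_0}$ yields a non-meager set; left multiplication by $h$ is a homeomorphism preserving non-meagerness, so $A(h\cdot y)$ is non-meager and $(x, h\cdot y) \in R^{i+1}$.

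The main obstacle is clearly in the inductive step, where the $g$-dependence of $\Delta(g)$ has to be dissolved. The combination of (i) partitioning $A(y)$ by the ``uniform size'' of $\Delta(g)$ through the co-analytic $W_n$ and appealing to Baire category, together with (ii) using conjugacy invariance of the $\Delta_m$'s to transport the algebraic condition cleanly from $y$ to $h\cdot y$, is what makes the argument go through. In the non-tsi setting one would compensate for the failure of conjugacy invariance by shrinking to $\Delta_{m(k,l)+1}$ on both sides, but the overall structure of the argument is identical.
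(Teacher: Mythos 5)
Your proof takes a genuinely different route from the paper's. You work with the full analytic witness set $A(y)$, partition it into the co-analytic sets $W_n$ by ``uniform size'' of the neighborhood the inductive hypothesis produces, apply Baire category to extract a nonmeager $W_{n_0}$, and then transport from $y$ to $h\cdot y$ via the substitution $g'=hg$ and conjugacy invariance of $\Delta_{m(k,l)}$. The paper instead picks a \emph{single} intermediate $z$ with $(x,z)\in R^i$ and $y=g_0\cdot z$, applies the inductive hypothesis once at $(x,z)$ to get $\Delta'$, then writes $h\cdot y=(h\cdot g_0\cdot a^{-1})\cdot(a\cdot z)$ and lets $a$ range over the nonmeager set of $a\in P_h$ with $a\cdot z\in C$, where $P_h=\{a\in\Delta':h\cdot g_0\cdot a^{-1}\in\Delta_{m(k,l)}\}$; no category-theoretic pigeonhole and no conjugation are needed. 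Your version is arguably more conceptual, the paper's is more elementary.

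Two concrete issues. First, a small gap: you conclude $(x,h\cdot y)\in R^{i+1}$ from nonmeagerness of $A(h\cdot y)$, but $R^{i+1}$ is a relation on $C\cap A_{k,l}$ and you only verify $h\cdot y\in C$. As in your own base case, you should shrink $\Delta_{n_0}$ by intersecting it with a neighborhood $\Delta'$ supplied by property {\bf (I)} at $y$, so that $C\cap\Delta'\cdot y\subseteq A_{k,l}$ and hence $h\cdot y\in A_{k,l}$ automatically. Second, and more significant: your argument invokes the tsi hypothesis (via $h^{-1}\Delta_{m(k,l)}h=\Delta_{m(k,l)}$) at the heart of the inductive step, whereas the paper's perturbation argument avoids tsi entirely. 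This matters because \cref{th:IC+bounded=EC} asserts that (A) implies (B) \emph{for any Polish group $G$}, and the proof of the Claim must therefore work without tsi. Your parenthetical suggestion that one can compensate in the non-tsi case by ``shrinking to $\Delta_{m(k,l)+1}$ on both sides'' does not obviously repair this: without two-sided invariance there is no uniform control of $h^{-1}\Delta_{m(k,l)+1}h$ over all $h$ in a fixed neighborhood of $1_G$. So your proof is correct for tsi Polish groups but does not reach the paper's stated generality.
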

\begin{proof}
Let $i=0$ and pick $(x,y)\in R^0$.
There is $g\in \Delta_{m(k,l)}$ such that $y=g\cdot x$.
Use {\bf (I)} to find $\Delta$ such that $\Delta\cdot g\subseteq \Delta_{m(k,l)}$ and $C\cap \Delta\cdot y\subseteq A_{k,l}$.
Then for every $h\in \Delta$ such that $h\cdot y\in C$ we have $h\cdot y\in A_{k,l}$ and since $h\cdot g\in \Delta_{m(k,l)}$ we conclude that $(x,h\cdot y)\in R^0$.

Let $(x,y)\in R^{i+1}$ and $z\in A_{k,l}$ be such that $(x,z)\in R^i$ and $y\in \Delta_{m(k,l)}\cdot z$.
By the inductive hypothesis there is $\Delta'$ an open neighborhood of $1_G$ such that $(x,h\cdot z)\in R^i$ whenever $h\cdot z\in C$.
Write $y=g_0\cdot z$ where $g_0 \in \Delta_{m(k,l)}$.
Using {\bf (I)} we find an open neighborhood $\Delta$ of $1_G$ such that $\Delta\cdot g_0\subseteq \Delta_{m(k,l)}$ and $C\cap \Delta\cdot y\subseteq A_{k,l}$.
We claim that $\Delta$ works as required.
Let $h\in \Delta$ such that $h\cdot y\in C$.
Note that
$$P_h=\{a\in \Delta':h\cdot g_0\cdot a^{-1}\in \Delta_{m(k,l)}\}$$
is an open neighborhood of $1_G$.
Let $a\in P_h$ be such that $a\cdot z\in C$, note that there are nonmeager many such $a\in P_h$ because $G(z,C)$ is comeager in $P$.
Then we have $(x,a\cdot z)\in R^i$ and
$$h\cdot y=h\cdot g_0\cdot z=h\cdot g_0\cdot a^{-1}\cdot (a\cdot z)\in \Delta_{m(k,l)}\cdot (a\cdot z).$$
This shows that $(x,h\cdot y)\in R^{i+1}$ and the claim follows.
\end{proof}

Suppose that $F_{k,l}\setminus R\not =\emptyset$ and pick $(x,y)\in F_{k,l}\setminus R$ such that the witness $z\in (C\cap A_{k,l})^{<\I N}$ from the definition of $F_{k,l}$ has minimal length.
It follows that $|z|>2$ and $(x,z_{|z|-2})\in R^i$ for some $i\in \I N$.
Use \cref{cl:arifical} to find an open neighborhood $\Delta$ of $1_G$ that satisfies $(x,h\cdot z_{|z|-2})\in R^i$ for every $h\in \Delta$ such that $h\cdot z_{|z|-2}\in C$.
Let $a\in \Delta_{m(k,l)}$ be such that $y=a\cdot z_{|z|-2}$.
The set 
$$P=\{h\in \Delta:a\cdot h^{-1}\in \Delta_{m(k,l)}\}$$
is an open neighborhood of $1_G$.
Then we have
$$y=a\cdot z_{|z|-2}=a\cdot h^{-1} \cdot (h\cdot z_{|z|-2})\in \Delta_{m(k,l)}\cdot (h\cdot z_{|z|-2})$$
for every $h\in P$.
Note that $G(z_{|z|-2},C)$ is comeager in $P$ and that shows that $(x,y)\in R^{i+1}$, a contradiction.

{\bf (III)}.
Fix $k\in \I N$ and define
$$K_{k}(x)=\overline{[x]_{F_{k,x(k)}}}^\tau$$
for every $x\in C$.
Let $U\subseteq X$ be a $\tau$-open set.
Then we have
$$U\cap K_k(x)\not=\emptyset \ \Leftrightarrow \ \exists g\in G \ g\cdot x\in [x]_{F_{k,x(k)}}\cap U \ \Leftrightarrow \ \exists^*g\in G \ g\cdot x\in [x]_{F_{k,x(k)}}\cap U$$
where the last equivalence follows from properties of $C$ from {\bf (I)}.
Combination of {\bf (II)} and \cite[Theorem~16.1]{kechrisclassical} implies that $K_k$ is a Borel map, see~\cite[Section~12]{kechrisclassical}.

{\bf (IV)}.
Put $S_k=S\circ K_k:X\to X$.
Then $S_k$ is a Borel map for every $k\in \I N$ by {\bf (III)}.
Define 
$$D_k=\{(x,S_k(x)):x\in C\}\cap E^{X}_G\subseteq X\times X.$$
It is easy to see that $D_k$ is a Borel set.
We show that $p_2(D_k)$, the projection to the second coordinate, is a Borel countable section of $E^{X}_G$.

First observe that for $x\in C$ we have $(x,S_k(x))\in D_k$ if and only if $(y,S_k(y))\in D_k$ for every $y\in [x]_{F_{k,x(k)}}$ and $S(x)=S(y)\in [x]_{E^X_G}$.
It follows from {\bf (II)} that there are at most countable many $F_{k,l}$-classes within each $E^X_G$-class and therefore $p_2(D_k)$ is a countable section of $E^X_G$.
To see that $p_2(D_k)$ is Borel note that by the properties of $C$ from {\bf (I)} and the definition of $S_k$ and $D_k$ we have
$$z\in p_2(D_k) \ \Leftrightarrow \ \exists g\in G \ g\cdot z \in p_1(D_k) \ \Leftrightarrow \ \exists^* g\in G \ g\cdot z \in p_1(D_k)$$
where $p_1(D_k)$ is the projection to the first coordinate.
Since $p_1(D_k)$ is Borel we have that $p_2(D_k)$ is Borel by \cite[Theorem~16.1]{kechrisclassical}.

{\bf (V)}.
Finally, we need to show that $\bigcup_{k\in\I N} p_2(D_k)$ is a complete section.
Let $x\in C$.
It follows from the definition of Property (IC) and $\tau$ that there is $k\in \I N$ such that 
$$\overline{[x]_{F_{k,x(k)}}}^\tau\subseteq \overline{\Delta_k\cdot x}^\tau\subseteq [x]_{E^X_G}.$$
Then we have $S_k(x)\in [x]_{E^X_G}$ and consequently $(x,S_k(x))\in D_k$.
This shows that $p_2(D_k)\cap [x]_{E^X_G}\not=\emptyset$.
\end{proof}

\section{Proof of \cref{thm:CCS=IC}}

In this section we show that classification by countable structures is equivalent to Property (IC) for Borel equivalence relations induced by actions of tsi Polish groups.
In the proof we use the dihypergraph variant of $\I G_0$-dichotomy \cref{thm:HypG0}.
Namely, we show that (1.) implies Property (IC), and that the continuous map from (2.) can be refined to a reduction from an equivalence relation from $\mathfrak{B}$, see \cref{sec:BASE}.
The technical results that we need are collected in \cref{app:technical} and \cref{app:Refine}.
The following statement is a formal reformulation of \cref{thm:CCS=IC}.

\begin{theorem}\label{thm:CCS=ICextended}
Let $G$ be a tsi Polish group and $X$ be a Polish $G$-space such that $E^X_G$ is Borel.
Then the following are equivalent:
\begin{enumerate}
    \item $E^X_G$ satisfies Property (IC),
    \item $E^X_G$ is classifiable by countable strutures.
\end{enumerate}
Moreover, the conditions are not satisfied if and only if there is $E\in \mathfrak{B}$ such that $E\le_B E^X_G$.
\end{theorem}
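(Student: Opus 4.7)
The implication (1) $\Rightarrow$ (2) is exactly \cref{th:IC ->CCS}. The remaining content (both the converse and the "moreover" clause) reduces, via \cref{th:base for nonCCS}, to a single claim: if $E^X_G$ fails Property (IC), then some $E \in \mathfrak{B}$ Borel reduces to $E^X_G$. Granted this claim, failure of (2) forces failure of (1) by combining its contrapositive with \cref{th:base for nonCCS}, and the "moreover" clause is immediate in both directions.

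The plan for the claim is to pick $k_0 \in \I N$ witnessing the failure of Property (IC) and apply the dihypergraph $\I G_0$-dichotomy (\cref{thm:HypG0}) to the decreasing sequence $(\fH_{k_0, m})_{m \in \I N}$. The first alternative of the dichotomy is precisely the Property (IC) condition at level $k_0$ and is therefore excluded; the second alternative produces a finitely uniformly branching tree $T$, a dense level-indexed sequence $(s_m)_{m \in \I N} \subseteq T$, and a continuous homomorphism $\varphi:[T]\to X$ from $(\I G^T_{s_m})_m$ to $(\fH_{k_0, m})_m$. By \cref{pr:Borel pseudo} the tsi hypothesis yields a Borel pseudometric ${\bf d}$ on $[T]$ with $F_{\bf d} = (\varphi^{-1} \times \varphi^{-1})(E^X_G)$, and the homomorphism property forces ${\bf d}(\alpha, \beta) \geq 2^{-k_0}$ whenever $\alpha, \beta$ are the first and last entries of an $\I G^T_{s_m}$-edge.

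The core technical step is a fusion that replaces $T$ by a full subtree $T' \subseteq T$ (still finitely uniformly branching, still carrying a dense level-indexed sequence, still equipped with $\varphi\upharpoonright [T']$ as a homomorphism) on which the induced pseudometric is \emph{uniform} in the sense of Section~\ref{sec:pseuodmetric}. One proceeds level by level, using continuity of $\varphi$ together with the two-sided invariance of the metric $d$ on $G$ to shrink each branching cone so that swapping the first $m$ coordinates, or swapping a tail below level $m$, perturbs ${\bf d}$ by at most $2^{-m}$. Once ${\bf d}$ is uniform, \cref{th:uniform nonmeager is everything} dichotomizes $F_{\bf d}$ as either $[T']\times[T']$ or meager; the former is ruled out by the preserved bound ${\bf d}\geq 2^{-k_0}$ on $\I G^{T'}_{s_m}$-edges. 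In the meager case, the branching sizes $\ell^{T'}_m$ together with ${\bf d}$ yield a non-trivial sequence of finite metric spaces $\mathcal{Z}=((Z_m,\mathfrak{d}_m))$ (the edge bound gives $\liminf r(Z_m) \geq 2^{-k_0}$ and the $\Delta_m$-jump condition gives $j(Z_m)\to 0$) with $E_\mathcal{Z}\subseteq F_{\bf d}$, so $F_{\bf d}\in\mathfrak{B}$ and $\varphi\upharpoonright[T']$ is the desired reduction.

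The main obstacle is the fusion step: one needs to simultaneously achieve uniformity, preserve the homomorphism inequality across every level, and keep $(s_m)$ dense in the refined tree. Two-sided invariance of $d$ is crucial, since distances on $[T]$ must be stable under the coordinate-swap operations driving uniformization; without tsi the whole construction collapses, which is exactly why the characterization is restricted to tsi Polish groups.
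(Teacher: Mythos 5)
Your plan matches the paper's strategy up to a crucial fork, and you have in effect skipped the harder half of the final case analysis. After invoking Theorem~\ref{th:IC ->CCS} for $(1)\Rightarrow(2)$, Theorem~\ref{th:base for nonCCS} for the contrapositive, Theorem~\ref{thm:HypG0} to obtain the tree homomorphism, and Lemma~\ref{lm:refinement} (your ``fusion'' step) to uniformize the pseudometric, you are exactly where the paper is at step~(III) of its proof. There you build the sequence $\mathcal{Z}=((Z_m,\mathfrak{d}_m))_m$ from the branching structure of $T$ and the uniform pseudometric $\mathbf{d}$, and you assert that $E_\mathcal{Z}\subseteq F_{\mathbf{d}}$, hence $F_{\mathbf{d}}\in\mathfrak{B}$. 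That inclusion is not automatic, and the paper does not claim it; it explicitly branches into two cases.

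To see why $E_\mathcal{Z}\subseteq F_{\mathbf{d}}$ can fail: $(\eta(x),\eta(y))\in E_\mathcal{Z}$ only means $\mathfrak{d}_m(x(m),y(m))\to 0$. To get $\mathbf{d}(\eta(x),\eta(y))<\infty$ by the Cauchy-convergence clause~(4) of the Borel pseudometric definition, one wants the sequence of finite modifications $y_\ell$ (agreeing with $y$ on the first $\ell$ levels and with $x$ afterwards) to be $\mathbf{d}$-Cauchy; but the natural bound coming from uniformity and the triangle inequality is of the form $\mathbf{d}(y_r,y_s)\lesssim\sum_{m=r}^{s-1}\mathfrak{d}_m(x(m),y(m))+\text{error}$, which requires summability of the coordinate distances, not just convergence to zero. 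This is precisely the $c_0$-versus-$\ell_1$ gap. The paper therefore handles the complementary case separately in step~(IV): given a witness $(x,y)$ to $E_\mathcal{Z}\not\subseteq F_{\mathbf{d}}$, it extracts the levels where $\alpha=\eta(x)$ and $\beta=\eta(y)$ disagree, defines a further diagonal embedding $\widetilde{\Gamma}\colon 2^{\I N}\to[T]$ interpolating between $\alpha$ and $\beta$, shows $\mathbf{d}_{\phi\circ\widetilde{\Gamma}}$ is a uniform Borel pseudometric whose $F$-relation is meager by Theorem~\ref{th:uniform nonmeager is everything}, and then constructs a tall lsc submeasure $\Theta$ on $\fP(\I N)$ (via a $\sup$ over $\mathbf{d}$-distances of branches differing only inside a given set) with $E_\Theta\subseteq F_{\mathbf{d}_{\phi\circ\widetilde{\Gamma}}}$, landing in the \emph{other} half of the definition of $\mathfrak{B}$. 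Without this second case your argument does not establish membership in $\mathfrak{B}$, so the claim is incomplete as written.
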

\begin{proof}[Proof of \cref{thm:CCS=IC} and of \cref{thm:CCS=ICextended}]
It follows from \cref{th:IC ->CCS} that (1) implies (2).
Moreover, by \cref{th:base for nonCCS} we have that the latter condition implies the former in the additional part of the statement.
Altogether, it is enough to show that if $E^X_G$ does not satisfy Property (IC), then there is $E\in \mathfrak{B}$ such that $E\le_B E^X_G$.

Fix $k\in \I N$ and consider the dihypergraphs $(\fH_{k,m})_{m\in \I N}$.
Note that $\fH_{k,m+1}\subseteq \fH_{k,m}$ for every $m\in \I N$.
Then exactly one alternative in \cref{thm:HypG0} holds.
Suppose that the first one is satisfied for every $k\in \I N$.
Then it follows directly from the definition that $E^X_G$ satisfies Property (IC).
Since we assume that this is not the case, there must be $k\in \I N$, a finitely uniformly branching tree $T'$, a dense sequence $(s'_m)_{m\in \I N}\subseteq T'$ such that $|s'_m|=m$ and a continuous homomorphism $\varphi:[T'] \to X$ from $(\I G^{T'}_{s'_m})_{m\in \I N}$ to $(\fH_{k,m})_{m\in \I N}$.

Next, we refine $\varphi$ to find $E\in \mathfrak{B}$.
The following is the main technical result, the proof can be found in \cref{app:Refine}, see also \cref{sec:pseuodmetric} for corresponding definitions.
We note that it uses crucially that $G$ is tsi.

\begin{lemma}\label{lm:refinement}
There is a finitely uniformly branching tree $T$, a dense sequence $(s_m)_{m\in \I N}\subseteq T$ such that $|s_m|=m$ for every $m\in \I N$ and a continuous homomorphism $\phi:[T]\to X$ from $(\I G^T_{s_m})_{m\in \I N}$ to $(\fH_{k,m})_{m\in \I N}$ such that ${\bf d}_{\phi}$, defined as in \cref{pr:Borel pseudo}, is a uniform Borel pseudometric.
Moreover, $\phi=\varphi\circ \zeta$ where $\zeta:[T]\to [T']$ is a continuous map.
\end{lemma}

The rest of the proof consists of four steps.

{\bf (I)}.
Let $m\in \I N$, $s,t\in T$ such that $m=|s|=|t|$, $i,j<l^T_m$ and $x,y\in [T_{s^\frown (i)}]=[T_{s^\frown (j)}]$.
Then
\begin{equation}\tag{*}\label{eq:smthn1}
|{\bf d}_\phi(s^\frown (i)^\frown x,s^\frown (j)^\frown x)-{\bf d}_\phi(t^\frown (i)^\frown y,t^\frown (j)^\frown y|<\frac{1}{2^{m-1}}.
\end{equation}
We use that ${\bf d}_\phi$ is uniform.
Namely, we have
$$|{\bf d}_\phi(s^\frown ((i)^\frown y),s^\frown ((j)^\frown y))-{\bf d}_\phi(t^\frown ((i)^\frown y),t^\frown ((j)^\frown y)|<\frac{1}{2^{m}}$$
$$|{\bf d}_\phi((s^\frown (i))^\frown x,(s^\frown (j))^\frown x)-{\bf d}_\phi((s^\frown (i))^\frown y,(s^\frown (j))^\frown y|<\frac{1}{2^{m+1}}$$
and that gives the estimate by the triangle inequality.

In particular, we have
$$\I E^T_0\subseteq F_{{\bf d}_\phi}=(\phi^{-1}\times \phi^{-1})(E^X_G)$$
because $\phi$ is a homomorphism from $(\I G^T_{s_m})_{m\in \I N}$ to $(\fH_{k,m})_{m\in \I N}$ and $|s_m|=m$ for every $m\in \I N$.

{\bf (II)}.
The Borel equivalence relation $F_{{\bf d}_\phi}$ is meager in $[T]\times [T]$.
Otherwise there is $\alpha\in [T]$ such that $[\alpha]_{\bf d}$ is comeager in $[T]$ by $\mathbb{E}^T_0\subseteq F_{{\bf d}_\phi}$ and \cite[Theorem~8.41]{kechrisclassical}.
It follows from (3) in the definition of Borel pseudometric that there are Borel sets $(U_l)_{l\in \I N}$ such that $\bigcup_{l\in \I N} U_l=[\alpha]_{F_{\bf d}}$ and 
$${\bf d}_{\phi}(\alpha,\beta)<\frac{1}{2^{k}}$$
for every $l\in \I N$ and $\alpha,\beta\in U_l$.
Using \cite[Proposition~8.41]{kechrisclassical} and the density of $(s_m)_{m\in\I N}$ we find $m,l\in \I N$ such that $U_l$ is comeager in ${s_m}^\frown [T_{s_m}]$.
This gives $x\in [T_{{s_m}^\frown (0)}]=[T_{{s_m}^\frown (l^T_m-1)}]$ such that
$${s_m}^\frown (0)^\frown x,{s_m}^\frown (l^T_m-1)^\frown x\in U_l.$$
Since $\phi$ is a homomorphism from $\mathbb{G}^T_{s_m}$ to $\mathcal{H}_{k,m}$, we have $(\phi({s_m}^\frown (i)^\frown x))_{i<l^T_m}\in \mathcal{H}_{k,m}$.
Consequently,
$$(\phi({s_m}^\frown (0)^\frown x),\phi({s_m}^\frown (l^T_m-1)^\frown x)\not \in R^X_{\Delta_k},$$
i.e.,
$${\bf d}_\phi({s_m}^\frown (0)^\frown x,{s_m}^\frown (l^T_m-1)^\frown x)>\frac{1}{2^k}.$$
This contradicts the choice of $x\in[T_{{s_m}^\frown (0)}]$ and we conclude that $F_{{\bf d}_\phi}$ is a meager equivalence relation.

{\bf (III)}.
Write ${\bf 0}\in [T]$ for the sequence $(0,0,0,\dots)$.
Let $m\in \I N$.
Since $(({s_m}^\frown (i)^\frown {\bf 0})_{i<l^T_m},{\bf d}_\phi)$ is a finite pseudometric space we find a metric space $(Z_m,\mathfrak{d}_m)$ where $Z_m=\{0,1,\dots, l^T_m-1\}$ and 
$$|{\bf d}_\phi({s_m}^\frown (i)^\frown {\bf 0},{s_m}^\frown (j)^\frown {\bf 0})- \mathfrak{d}_m(i,j)|<\frac{1}{2^{m-1}}$$
for every $i,j<l^T_m$.
We have
$$\frac{1}{2^k}-\frac{1}{2^{m-1}}\le \mathfrak{d}_m(0,l^T_m-1)\le r(Z_m,\mathfrak{d}_m)$$
and $j(Z_m,\mathfrak{d}_m)<\frac{1}{2^{m-2}}$ because $\phi$ is a homomorphism from $\I G_{s_m}$ to $\mathcal{H}_{k,m}$.

This implies immediately that $\mathcal{Z}=((Z_m,\mathfrak{d}_m))_{m\in \I N}$ is a non-trivial sequence of finite metric spaces.
Consider the bijective homeomorphism
$$\eta:\prod_{m\in \I N} Z_m\to [T]$$
that is defined as
$$\eta(x)(m)=i \ \Leftrightarrow \ x(m)=i.$$
If $E_\mathcal{Z}\subseteq E=(\eta^{-1}\times \eta^{-1})(F_{{\bf d}_\phi})$, then we are done because  $E\in \mathfrak{B}$ by {(II)} and $\phi\circ \eta$ is a reduction from $E$ to $E^X_G$.

{\bf (IV)}.
Suppose that $E_\mathcal{Z}\not\subseteq E=(\eta^{-1}\times \eta^{-1})(F_{{\bf d}_\phi})$ in {(III)}.
By the definition, we find $x,y\in \prod_{m\in \mathbb{N}} Z_m$ such that
$$\mathfrak{d}_m(x(m),y(m))\to 0$$
and $(\eta(x),\eta(y))\not \in F_{{\bf d}_\phi}$.
Set $\alpha=\eta(x)$ and $\beta=\eta(y)$.

We have $|\{m\in \I N:\alpha(m)\not =\beta(m)\}|=\aleph_0$ because $\I E^T_0\subseteq F_{{\bf d}_{\phi}}$.
Let $(m_l)_{l\in \I N}$ be an increasing enumeration of $\{m\in \I N:\alpha(m)\not = \beta(m)\}$ and set ${\bf 0}_l=\alpha(m_l)$, ${\bf 1}_l=\beta(m_l)$ for every $l\in \I N$.
There is a sequence $(t_l)_{l\in \I N}\subseteq \I N^{<\I N}$ such that 
$$\alpha={t_0}^\frown {{\bf 0}_0}^\frown {t_1}^\frown {{\bf 0}_1}^\frown \dots \ \& \ \beta={t_0}^\frown {{\bf 1}_0}^\frown {t_1}^\frown {{\bf 1}_1}^\frown \dots$$
Define $\Gamma:2^{<\I N} \to T$ as
$$\Gamma(s)={t_0}^\frown {{\bf s(0)}}^\frown {t_1}^\frown {{\bf s(1)}}^\frown\dots ^\frown {{\bf s(|s|-1)}}^\frown t_{|s|}\in T$$
for every $s\in 2^{<\I N}$, where ${\bf s(j)}={\bf 0}_j$ if $s(j)=0$ and ${\bf s(j)}={\bf 1}_j$ if $s(j)=1$.
Write $\widetilde{\Gamma}:2^\I N\to [T]$ for the extension of $\Gamma$ to $2^{\I N}$.
It is easy to see that $\widetilde{\Gamma}$ is a well defined continuous map.

Set ${\bf d}={\bf d}_{\phi\circ \widetilde{\Gamma}}$ and $E:=F_{{\bf d}_{\phi\circ \widetilde{\Gamma}}}$.
It is easy to see that ${\bf d}$ is a uniform Borel pseudometric, this follows from the definition of $\Gamma$.
Moreover, $(\widetilde{\Gamma}^{-1}(\alpha),\widetilde{\Gamma}^{-1}(\beta))\not\in F_{\bf d}$.
Consequently by \cref{th:uniform nonmeager is everything}, we have that $E$ is meager in $2^{\I N}\times 2^{\I N}$.

To finish the proof we define a tall lsc submeasure $\Theta$ such that $E_{\Theta}\subseteq E$.
Indeed, then we have $E\in \mathfrak{B}$ and, clearly, $\phi\circ\widetilde{\Gamma}$ is a reduction from $E$ to $E^X_G$.

Recall that $\fP(\I N)$ is the power set of $\I N$.
Let $M\in \mathcal{P}(\I N)$ be finite.
Define
\begin{equation*}
\begin{split}
    \Theta(M)= & \ \sup\left\{{\bf d}(a,b):a,b\in 2^{\I N} \ \{\ell\in \I N:a(\ell)\not=b(\ell)\}\subseteq M \ \right\} \\
    = & \ \sup\left\{{\bf d}_\phi(\widetilde{\Gamma}(a),\widetilde{\Gamma}(b)):a,b\in 2^{\I N} \ \{m\in \mathbb{N}:\widetilde{\Gamma}(a)(m)\not=\widetilde{\Gamma}(b)(m)\}\subseteq (m_\ell)_{\ell\in M} \ \right\}.
\end{split}
\end{equation*}
Let $M\in \mathcal{P}(\mathbb{N})$ be infinite.
Then we define $\Theta(M)=\lim_{l\to\infty} \Theta(M\cap l)$.

{\bf (a)}.
We show that $\Theta$ is a tall lsc submeasure.
\begin{itemize}
    \item It is easy to see that $\Theta$ is monotone, $\Theta(\emptyset)=0$ and $\Theta(M)=\lim_{\ell\to \infty}\Theta(M\cap \ell)$ for every $M\in \mathcal{P}(\I N)$.
    \item Let $M,N\in \fP(\I N)$ be finite and $a,b\in 2^{\I N}$ such that $\{\ell\in \I N:a(\ell)\not b(\ell)\}\subseteq M\cup N$.
    Set $c\in 2^{\I N}$ to be equal to $b$ on $M$ and to to $a$ on $N\setminus M$.
    Then we have
    \begin{equation*}
        \begin{split}
            {\bf d}(a,b)\le {\bf d}(a,c)+{\bf d}(c,b)\le \Theta(M)+\Theta(N),
        \end{split}
    \end{equation*}
    becuase ${\bf d}$ is a pseudometric.
    Since this holds for every such $a,b\in 2^{\I N}$, we conclude that $\Theta(M\cup N)\le \Theta(M)+\Theta(N)$.
    For infinite $M,N$ the conclusion holds by taking the limit in the definition.
    \item Let $\ell\in \I N$ and $a,b\in 2^{\I N}$ be such that $a(\ell')\not=b(\ell')$ only when $\ell=\ell'$.
    Observe that this implies that $\widetilde{\Gamma}(a)$ and $\widetilde{\Gamma}(b)$ differ only at $m_\ell$.
    We have
    \begin{equation*}
        \begin{split}
            {\bf d}(a,b)= & \ {\bf d}_\phi(\widetilde{\Gamma}(a),\widetilde{\Gamma}(b)) \\
            \le & \ {\bf d}_\phi({s_{m_\ell}}^\frown{\alpha(m_\ell)}^\frown {\bf 0},{s_{m_\ell}}^\frown{\beta(m_\ell)}^\frown {\bf 0})+\frac{1}{2^{m_\ell-1}} \\
            \le & \ \mathfrak{d}_{m_\ell}(\alpha(m_\ell),\beta(m_\ell))+\frac{1}{2^{m_\ell-2}}
        \end{split}
    \end{equation*}
    by \eqref{eq:smthn1} and the definition of $\mathfrak{d}_{m_\ell}$ in (III).
    This shows that $\Theta(\{\ell\})<+\infty$ and $\lim_{\ell\to \infty}\Theta(\{\ell\})=0$ by the choice of $\alpha$ and $\beta$.
    This shows that $\Theta$ is a tall lsc submeasure.
\end{itemize}

{\bf (b)}
Let $a,b\in 2^{\I N}$ be such that $(a,b)\in E_{\Theta}$.
We show that $(a,b)\in E$.
Set $X=\{j\in \I N:a(j)\not=b(j)\}$.
Then we have that $\lim_{\ell\to \infty} \Theta(X\setminus \ell)=0$ by the definition of $E_\Theta$.
For every $\ell\in \I N$, define $a_\ell(j)=b(j)$ for every $j<\ell$ and $a_\ell(j)=a(j)$ for every $j\ge \ell$.
We have $(a_\ell,a)\in \I E_0$ for every $\ell\in \I N$ and $a_\ell\to b$.

We have that $\widetilde{\Gamma}(a_\ell)\to \widetilde{\Gamma}(b)$ by continuity of $\widetilde{\Gamma}$ and it is easy to see that
$$\left(\widetilde{\Gamma}(a_\ell),\widetilde{\Gamma}(a)\right)\in \I E^T_0\subseteq F_{{\bf d}_\phi}.$$
Let $\ell\le r\le s\in \I N$.
We have that $\{j\in \I N:a_r(j)\not =a_s(j)\}= X\cap \{r,\dots,s-1\}\subseteq X\setminus \ell$.
Consequently, by the definition of $\Theta$ and our assumption, we have
\begin{equation*}
{\bf d}_{\phi}\left(\widetilde{\Gamma}(a_r),\widetilde{\Gamma}(a_s) \right)\le \Theta(X\cap \{r,\dots,s-1\})\le \Theta(X\setminus \ell)\to 0.
\end{equation*}
In particular, $(\widetilde{\Gamma}(a_\ell))_{\ell\in \I N}$ is a ${\bf d}_\phi$-Cauchy sequence.
By the definition of Borel pseudometric we have ${\bf d}_\phi(\widetilde{\Gamma}(a_\ell),\widetilde{\Gamma}(b))\to 0$.
In particular, $(\widetilde{\Gamma}(a),\widetilde{\Gamma}(b))\in F_{\bf d_\phi}$ and, consequently, $(a,b)\in E$.
\end{proof}

\section{Proof of \cref{thm:E_3dichotomy}}

In this section we combine the characterization of classification by countable structures from previous section together with a result of Miller \cite{MillerLacunary} to show that if a Borel equivalence relation $E^X_G$ induced by an action of tsi Polish group $G$ admits classification by countable structures, then it is essentially countable if and only if $\I E_0^{\I N}\not\le_B E^X_G$.
In another words, under the assumption of classification by countable structures we have that $\I E_0^{\I N}$ is the canonical obstruction for EC.
We note that in the previous results \cite{HjorthKechris,MillerLacunary} the corresponding statement is proved for tsi \emph{non-archimedean} Polish groups.

The strategy of the proof combines two variants of the $\I G_0$-dichotomy.
First one is hidden in the implication that classification by countable structures implies Property (IC) and the second one is the characterization of $\sigma$-lacunarity given by Miller \cite{MillerLacunary}.
The technical results that are used in the proof are collected in \cref{app:Refine}.
The following statement is a formal reformulation of \cref{thm:E_3dichotomy}

\begin{theorem}\label{thm:E_3version}
Let $G$ be a tsi Polish group, $X$ be a Polish $G$-space and $E^X_G$ be a Borel equivalence relation that is classifiable by countable structures.
Then the following are equivalent:
\begin{enumerate}
	\item $E^X_G$ is essentially countable,
	\item there is a finer $G$-Polish topology on $X$ that is $G$-bounded,
	\item $\I E_0^{\I N}\not\le_B E^X_G$.
\end{enumerate}
\end{theorem}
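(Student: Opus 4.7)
The implications (1) $\Rightarrow$ (2) and (2) $\Rightarrow$ (3) are already in hand from earlier sections, namely \cref{cor:EC implies bounded} and \cref{cor:bounded implies no E_3}, both of which hold for arbitrary Polish groups. So the content of the theorem lies in the implication (3) $\Rightarrow$ (1), which I would prove in its contrapositive form: assuming $E^X_G$ is not essentially countable, construct a Borel reduction $\I E_0^{\I N} \leq_B E^X_G$. By the classification-by-countable-structures hypothesis and \cref{thm:CCS=ICextended}, $E^X_G$ satisfies Property (IC); fix a family $(A_{k,l})_{k,l\in\I N}$ of Borel sets together with the witnessing indices $m(k,l)$.

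The second step is to apply the variant \cref{thm:G_0Ben} of the $\I G_0$-dichotomy to a sequence of analytic digraphs $(G_{i,j})_{i,j\in \I N}$ designed so that the first alternative characterizes $\sigma$-lacunarity, as in Miller \cite{MillerLacunary}. The canonical choice sets $(x,y)\in G_{i,j}$ when $y\in \Delta_j\cdot x$ and $x\not= y$, together with a complexity condition on $i$ controlling the separation of lacunary sections. By \cite{GrebikLacunary}, $\sigma$-lacunarity coincides with essential countability in our setting, so the assumption that $E^X_G$ is not essentially countable rules out alternative (1). Hence alternative (2) produces a function $f:\I N\to\I N$ and a continuous homomorphism $\varphi:2^{\I N}\to X$ from $(\I G_{0,k})_{k\in \I N}$ to $(G_{k,f(k)})_{k\in \I N}$.

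The third step is to upgrade $\varphi$ to a Borel reduction from $\I E_0^{\I N}$. Miller accomplishes this in \cite{MillerLacunary} under the assumption that $G$ is non-archimedean by using a base at $1_G$ made of clopen subgroups, which ensures that jumps at coordinate $k$ do not perturb lower coordinates at all. The plan is to replace this rigid feature by an approximate one obtained from Property (IC): inductively build a finitely uniformly branching subtree $T$ and a continuous refinement $\phi=\varphi\circ \zeta:[T]\to X$ in the spirit of \cref{lm:refinement} such that, at stage $k$, every relevant basic cylinder of $[T]$ is mapped into a single piece $A_{k,l}$ of the decomposition. The $\fH_{k,m(k,l)}$-independence then guarantees that chains of $\Delta_{m(k,l)}$-jumps inside such a cylinder stay within the $\Delta_k$-neighborhood of their starting point in $E^X_G$, playing the role formerly played by clopen subgroups. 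Combined with the two-sided invariance of the metric on $G$ (used, as in \cref{pr:Borel pseudo}, to produce uniform Borel pseudometrics on each level of the construction), the associated reduction from $\I E_0^{\I N}$ is then obtained by Miller's diagonal extraction.

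The main obstacle is the simultaneous coherence of these two refinements. In the non-archimedean case, once coordinate $k$ is ``frozen'' inside a clopen subgroup, all higher levels can be refined independently; under Property (IC) the stability is only up to $\Delta_k$ and one must arrange that the later refinements (which involve descending into $\Delta_{m(k',l')}$-neighborhoods for $k'>k$) do not destroy the independence exploited at level $k$. Handling this will require carefully interleaving the Fubini-style comeager thinning arguments of \cref{sec:ProofOfCCS} (in particular the analogue of \cref{lm:rel open}) with the tree construction, and choosing the sequence $(s_m)_{m\in\I N}\subseteq T$ so that each $k$ gets infinitely many dense opportunities to fire while the $\fH_{k,m(k,l)}$-independence inherited from earlier stages is preserved. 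Once this bookkeeping is in place, Miller's argument in \cite{MillerLacunary} goes through essentially verbatim and produces the desired reduction, completing the proof.
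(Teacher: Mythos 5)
Your high-level strategy coincides with the paper's: reduce to the contrapositive of (3) $\Rightarrow$ (1), invoke \cref{thm:CCS=ICextended} to extract Property (IC), apply Miller's variant \cref{thm:G_0Ben} of the $\I G_0$-dichotomy (the failure of essential countability, via $\sigma$-lacunarity, rules out the first alternative), and then refine the resulting homomorphism so that Property (IC) takes over the structural role played by clopen subgroups in Miller's non-archimedean argument. You also correctly identify the crux — maintaining coherence between the Property (IC) independence at each level and the later refinements — as the thing that must be engineered.

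Where your proposal drifts is in the technical machinery you propose to carry this out. You cite \cref{lm:refinement} and the uniform Borel pseudometric construction of \cref{pr:Borel pseudo} as the tools, but those belong to the proof of \cref{thm:CCS=ICextended} and are not used here: that lemma produces a homomorphism defined on a finitely uniformly branching tree $[T]$ and a uniform pseudometric, which is the right shape for building a base equivalence relation in $\mathfrak{B}$, not for extracting a reduction from $\I E_0^{\I N}$. The paper instead stays on $2^{\I N}$ throughout and uses two separately designed refinement lemmas. The first, \cref{lm:first ref}, is where the tsi hypothesis does the real work: it upgrades the homomorphism from the ``dense-sequence'' form $(\I G_{0,k})_{k\in\I N} \to (G_{k,f(k)})_{k\in\I N}$ to a genuinely uniform homomorphism from $(\I G_s)_{s\in 2^{<\I N}}$ to $(G_{k_{|s|},k_{|s|}+1})_{s\in 2^{<\I N}}$, using conjugacy invariance of the $V_k$'s — this step does not involve Property (IC) at all. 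Only afterwards does the second refinement, \cref{lm:second ref}, bring in the decomposition $(A_{k,l})$ to arrange that $\varphi(s^\frown 2^{\I N})$ lands inside an $\fH_{k,m(k)}$-independent set whenever $|s|=n_0(k)$; this step actually holds for any Polish group. Your proposal merges these into a single tree-construction phase, which both mis-attributes where tsi is needed and detours through $[T]$ when the target $\I E_0^{\I N}$ wants to be reached via $2^{\I N\times\I N}\to 2^{\I N}$. Finally, the closing phrase that ``Miller's argument goes through essentially verbatim'' conceals a genuine new verification: one must prove that $\varphi\circ\psi$ is a homomorphism from $\I E_0^{\I N}$ to $E^X_G$, which requires precisely the combination of $R^X_{V_k}\subseteq R^X_{\Delta_{m(k)}}$ with the $\fH_{k,m(k)}$-independence on cylinders of height $n_0(k)$, plus completeness of the invariant metric to glue the resulting group elements into a single $h\in G$. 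So the roadmap is right, but the lemma citations and the division of labor between tsi and Property (IC) would need to be reworked before this becomes the paper's proof.
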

\begin{proof}[Proof of \cref{thm:E_3dichotomy} and \cref{thm:E_3version}]
(1) $\Rightarrow$ (2) is \cref{cor:EC implies bounded} and (2) $\Rightarrow$ (3) is \cref{cor:bounded implies no E_3}.
It remains to show that (3) $\Rightarrow$ (1).

Recall from \cref{subsec:G_0} that $k_n\in \I N$ is such that $k_0=0$, $k_{n+1}\le \max\{k_m:m\le n\}+1$ for every $n\in\I N$ and for every $k\in\I N$ there are infinitely many $n\in\I N$ such that $k_n=k$.

It will be convenient for us to fix another open base $(V_k)_{k\in \I N}$ at $1_G$.
During the construction we pass two times to a subsequence of $(V_k)_{k\in \I N}$ but we keep the notation $(V_k)_{k\in \I N}$.
In the beginning we set $V_k=\Delta_k$ but one should keep in mind that $(V_k)_{k\in \I N}$ changes during the refinements.
Similar warning applies to the following definition of Miller \cite{MillerLacunary}.
Define $G_{i,j}=R^X_{V_i}\setminus R^X_{V_j}$ for every $i,j\in \I N$.
Suppose that $E^X_G$ does not satisfy (1), i.e., it is not essentially countable.
Then it follows from \cite[Theorem~1.1,~Proposition~2.3]{MillerLacunary} that there is a function $f:\I N\to \I N$ and a continuous homomorphism $\varphi_0:2^\I N\to X$ from $(\I G_{0,k})_{k\in \I N}$ to $(G_{k,f(k)})_{k\in \I N}$.
In this step \cref{thm:G_0Ben} is used.

The proof of the following result can be found in \cref{app:Refine}.
We note that \cref{lm:first ref} uses crucially that $G$ is tsi Polish group while \cref{lm:second ref} holds for every Polish group.

\begin{lemma}[First refinement]\label{lm:first ref}
Let $\varphi_0:2^\I N\to X$ be a continuous homomorphism from $(\I G_{0,k})_{k\in \I N}$ to $(G_{k,f(k)})_{k\in \I N}$.
Then, after possibly passing to a subsequence of $(V_k)_{k\in \I N}$, there is a continuous homomorphism $\varphi_1:2^\I N\to X$ from $(\I G_{s})_{s\in 2^{<\I N}}$ to $(G_{k_{|s|},k_{|s|}+1})_{s\in 2^{<\I N}}$.
\end{lemma}

By \cref{thm:CCS=IC} we have that $E^X_G$ satisfies Property (IC).
That is there is a sequence of Borel sets $(A_{k,l})_{k,l\in \I N}$ such that $\bigcup_{l\in \I N} A_{k,l}=X$ for every $k\in \I N$ and $A_{k,l}$ is $\mathcal{H}_{k,m(k,l)}$-independent for some $m(k,l)\in \I N$.
We stress that the dihypergraphs are defined using the sequence $(\Delta_k)_{k\in \I N}$.
We put $n_0(k)$ to be the minimal $n\in \I N$ such that $k_n=k$.
The proof of the following result can be found in \cref{app:Refine}.

\begin{lemma}[Second refinement]\label{lm:second ref}
Let $\varphi_1:2^\I N\to X$ be a continuous homomorphism from $(\I G_{s})_{s\in 2^{<\I N}}$ to $(G_{k_{|s|},k_{|s|}+1})_{s\in 2^{<\I N}}$.
Then, after possibly passing to a subsequence of $(V_k)_{k\in \I N}$, there is a continuous homomorphism $\varphi:2^\I N\to X$ from $(\I G_{s})_{s\in 2^{<\I N}}$ to $(G_{k_{|s|},k_{|s|}+1})_{s\in 2^{<\I N}}$ and for every $k\in\I N$ there is $m(k)\in \I N$ such that $V_k\subseteq \Delta_{m(k)}$ and the set 
$$\varphi\left(\{s^\frown c\in 2^{\I N}:c\in 2^{\I N}\}\right)$$
is $\mathcal{H}_{k,m(k)}$-independent for every $s\in 2^{<\I N}$ such that $|s|=n_0(k)$.
\end{lemma}

The rest of the proof closely follows the proof of \cite[Theorem~4.1]{MillerLacunary}.
Suppose that we have $\varphi$ as in \cref{lm:second ref}.
In particular it satisfies \cite[Lemma~4.2]{MillerLacunary}.
Set $\ell_n=|\{m<n:k_m=k_n\}|$ for all $n\in\I N$ and define $\psi:2^{\I N\times \I N}\to 2^{\I N}$ as $\psi(c)(n)=c(k_n,\ell_n)$ for all $c\in 2^{\I N\times \I N}$ and $n\in\I N$, see the definition after \cite[Lemma~4.2]{MillerLacunary}.

\begin{claim*}
The continuous map $\varphi\circ\psi:2^{\I N\times \I N}\to X$ is a homomorphism from $\I E_0^{\I N}$ to $E^X_G$.
\end{claim*}
\begin{proof}
Let $c,d\in 2^{\I N\times \I N}$ and suppose that there is $k\in \I N$ such that
$$\left\{(i,\ell)\in \I N\times \I N:c(i,\ell)\not=d(i,\ell)\right\}\subseteq \{k\}\times \I N$$
and the set on the left-hand side is finite.
It is easy to see that $(\varphi\circ\psi(c),\varphi\circ\psi(d))\in E^X_G$.
We show that, in fact, $(\varphi\circ\psi(c),\varphi\circ\psi(d))\in R^X_{\Delta_k}$.

Set $x=\psi(c)$ and $y=\psi(d)$.
Write $(n_1,\dots,n_p)\subseteq \I N$ for the increasing enumeration of the indices where $x$ and $y$ differ.
From the assumption we have $n_0(k)\le n_1$ and $k_{n_i}=k$ for every $1\le i\le p$.
Let $x_1:=x$ and $x_{i+1}$ differ from $x_i$ only in the $n_{i+1}$-th position for every $1\le i<p$.
Clearly, $y=x_p$.
By \cref{lm:first ref} and \cref{lm:second ref}, we have that $(\varphi(x_i),\varphi(x_{i+1}))\in R^{X}_{V_k}\subseteq R^{X}_{\Delta(m(k))}$ and $(\varphi(x_1),\dots,\varphi(x_p))\not\in \fH_{k,m(k)}$.
Consequently, we have
$$(\varphi\circ \psi(c),\varphi\circ\psi(d))=(\varphi(x),\varphi(y))=R^X_{\Delta_k}$$
as promised.

Let $c,d\in 2^{\I N\times \I N}$ be such that $(c,d)\in \I E_0^{\I N}$.
Define $c_0=c$ and $c_{k+1}$ to be equal to $c_k$ except for the vertical section $\{k\}\times \I N$, where it is equal to $d$.
Set $x_k=\varphi\circ \psi(c_k)$, $x=x_0$ and $\varphi\circ \psi(d)=y$
It is easy to see that $c_k\to d$ in $2^{\I N\times {\I N}}$, and by the continuity of $\varphi\circ \psi$ we have $x_k\to y$ in $X$.
Moreover, $c_k,c_{k+1}$ satisfy the assumption above and we conclude that
$$(x_k,x_{k+1})=R^X_{\Delta_k}.$$
Pick $g_k\in \Delta_{k}$ that satisfies $g_k\cdot x_k=x_{k+1}$.
Then we have
$$g_k\cdot g_{k-1}\cdot\dots\cdot g_0\cdot x=x_{k+1}\to y$$
in $X$.
By the definition of $\Delta_k$, we have that
$$g_k\cdot g_{k-1}\cdot\dots\cdot g_0\to h$$
for some $h\in G$.
The continuity of the action guarantees that $h\cdot x=y$, that is, $(x,y)\in E^X_G$, and the proof is finished.
\end{proof}

In order to use \cite[Lemma~3.6]{MillerLacunary} we need to verify the assumptions.
Recall that for $F\subseteq \I N\times \I N$ and $i\in \I N$ we define
$$\I D_{i,F}=\left\{(c,d)\in 2^{\I N\times \I N}\times 2^{\I N\times \I N}:\{(i,n):c(i,n)\not=d(i,n)\}=F\cap (i\times \I N)\right\}$$

\begin{claim*}
For every $i\in \I N$ and finite set $F\subseteq i\times \I N$, the equivalence relation 
$$((\varphi\circ\psi)^{-1}\times (\varphi\circ\psi)^{-1})(E^X_G)$$ is meager in $\I D_{i,F}$.
\end{claim*}
\begin{proof}
The proof of \cite[Lemma~4.5]{MillerLacunary} uses only the fact that the $\varphi$ is a homomorphism from$(\I G_{0,k})_{k\in \I N}$ to $(G_{k,f(k)})_{k\in \I N}$.
Therefore it can be applied in our situation as well.
\end{proof}

The proof is now finished as follows.
By \cite[Lemma~3.6]{MillerLacunary} we find a continuous homomorphism $\phi:2^{\I N\times \I N}\to 2^{\I N\times \I N}$ from $(\I E_0^{\I N},\sim \I E_0^{\I N})$ to $(\I E_0^{\I N},\sim ((\varphi\circ\psi)^{-1}\times (\varphi\circ\psi)^{-1})(E^X_G))$, where $\sim A$ denotes the complement of $A$.
The function $\varphi\circ \psi\circ \phi$ is the desired reduction from $\I E_0^{\I N}$ to $E^X_G$.
\end{proof}

\section{Remarks}

There are two main open questions connected to \cref{thm:CCS=IC} and \cref{thm:E_3dichotomy}.

\begin{question}
Consider the class of Borel equivalence relations induced by actions of tsi Polish groups.
\begin{enumerate}
    \item Let $\mathfrak{C}$ be the collection of meager equivalence relations $E_\Theta$ and $E_\mathcal{Z}$ where $\Theta$ runs over all tall lsc submeasures and $\mathcal{Z}$ over non-trivial sequences of finite metric spaces.
    Is it enough to take $\mathfrak{C}$, instead of $\mathfrak{B}$, as the base of non-classification by countable structures? 
    \item Is the existence of a bounded $G$-Polish topology equivalent to non-reducibility of $\I E_0^{\I N}$?
\end{enumerate}
\end{question}

We conclude our investigation with several remarks.
We mentioned in the introduction that our starting point was Hjorth's dichotomy \cite{HjorthDichotomy}.
From our results, as stated, it does not directly follow that we can recover this dichotomy.
However, there are two easy modifications that gives this result.
First, \cref{thm:CCS=IC} can be stated relative to an analytic set $A\subseteq X$, see~\cite[Theorem~3.3.5]{GrebikPhD}.
Second, if we formulate Hjorth's dichotomy in the form where the Banach space $\ell_1$ is replaced by the summable ideal \cite[Chapter~15]{Kanovei}, then it is not difficult to verify that one is always in (IV) in the proof of \cref{thm:CCS=ICextended}, the lsc submeasure constructed there corresponds to the summable ideal and the map can be refined to a reduction, see \cite[Proof of Theorem~3.1.3]{GrebikPhD}.

As a last thing we mention that it is possible to prove that if $f:Y\to X$ is a Borel reduction from $E$ to $E^X_G$, where $G$ is tsi Polish group, and $E$ admits classification by countable structures, then there is a Borel $G$-nvariant set $B$ that contains $f(Y)$ such that $E^X_G\upharpoonright B\times B$ admits classification by countable structures.
This implies a slight strengthening of \cref{thm:E_3version}.

\begin{theorem}\label{th:main2}
Let $G$ be a tsi Polish group and $X$ be a Polish $G$-space such that $E^X_G$ is Borel.
Then the following are equivalent:
\begin{enumerate}
	\item $E^X_G\upharpoonright B\times B$ is essentially countable for every Borel set $B\subseteq X$ that is $G$-invariant and $E^X_G\upharpoonright B\times B$ is classifiable by countable structures,
	\item $\I E_0^{\I N}\not\le_B E^X_G$.
\end{enumerate}
\end{theorem}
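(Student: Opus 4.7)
The plan is to bootstrap \cref{thm:E_3version} by restricting the action to a suitable Borel $G$-invariant set, reducing the general situation to the classifiable-by-countable-structures case already handled.

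For the implication $(2) \Rightarrow (1)$, fix a Borel $G$-invariant set $B \subseteq X$ such that $E^X_G \upharpoonright B \times B$ is classifiable by countable structures. Since $B$ is $G$-invariant, the Borel action $G \curvearrowright B$ is well defined and by Becker-Kechris admits a compatible $G$-Polish topology on $B$ whose orbit equivalence relation is precisely $E^X_G \upharpoonright B \times B$. Any Borel reduction from $\mathbb{E}_0^{\mathbb{N}}$ to $E^X_G \upharpoonright B \times B$, composed with inclusion $B \hookrightarrow X$, would give $\mathbb{E}_0^{\mathbb{N}} \le_B E^X_G$, contradicting hypothesis $(2)$. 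Thus \cref{thm:E_3version} applied to the Polish $G$-space $B$ yields essential countability of $E^X_G \upharpoonright B \times B$.

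For $(1) \Rightarrow (2)$ I argue contrapositively. Suppose $f : 2^{\mathbb{N} \times \mathbb{N}} \to X$ is a Borel reduction from $\mathbb{E}_0^{\mathbb{N}}$ to $E^X_G$. The relation $\mathbb{E}_0^{\mathbb{N}}$ is the orbit equivalence relation of the continuous action of the non-archimedean Polish group $(2^{<\mathbb{N}})^{\mathbb{N}}$, which is CCS because actions of non-archimedean Polish groups trivially satisfy Property (IC) (the open neighborhood base of $1_G$ can be taken to consist of clopen subgroups, which makes all $\fH_{k,m}$ empty for appropriate $m$). Apply the auxiliary fact stated in the paragraph preceding the theorem: there is a Borel $G$-invariant set $B \supseteq f(2^{\mathbb{N} \times \mathbb{N}})$ such that $E^X_G \upharpoonright B \times B$ is CCS. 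The same $f$ witnesses $\mathbb{E}_0^{\mathbb{N}} \le_B E^X_G \upharpoonright B \times B$, so \cref{thm:E_3version} forces this restricted equivalence relation to not be essentially countable, contradicting $(1)$.

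The main obstacle is the auxiliary fact itself, which I would prove by exploiting \cref{thm:CCS=ICextended}: CCS is equivalent to Property (IC), and failure of Property (IC) is witnessed, via the dihypergraph $\mathbb{G}_0$-dichotomy, by a Borel reduction from some $E' \in \mathfrak{B}$, which in turn fails to be CCS by \cref{th:base for nonCCS}. Starting from a Borel reduction $h : Y \to W$ witnessing the CCS of the given equivalence relation $E$ on $Y$, and observing that $(f \times f)^{-1}(E^X_G) = E$ is CCS on $Y$, I would construct $B$ via a Luzin-style separation: the $E^X_G$-saturation of $f(Y)$ is analytic, and one needs to sandwich a Borel $G$-invariant set $B$ between $f(Y)$ and this saturation so that every candidate $\mathfrak{B}$-obstacle $\eta : Z \to X$ has its image disjoint from $B$; this is possible because such an $\eta$ could not reduce back through $f$ without contradicting CCS of $E$. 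Controlling both the Luzin separation and Property (IC) simultaneously, likely via a transfinite peeling that tracks the obstacle images produced by iterated applications of \cref{thm:HypG0}, is the delicate descriptive-set-theoretic step, and is where the classifiability of $E$ is genuinely used to block each stage of the construction.
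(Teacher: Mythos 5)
Your deduction of both implications from \cref{thm:E_3version} together with the auxiliary fact (a Borel $G$-invariant CCS superset of any Borel reduction image from a CCS equivalence relation) is exactly the route the paper has in mind: the paper states this auxiliary fact in the sentence immediately preceding \cref{th:main2} and then asserts the theorem as a consequence, without supplying the details you attempt. Both implications are sound at the top level: for $(2) \Rightarrow (1)$ you correctly pass to the Polish $G$-space $B$ via Becker--Kechris and invoke \cref{thm:E_3version}; for $(1) \Rightarrow (2)$ you correctly note that $\I E_0^{\I N}$ is CCS (being non-archimedean) and feed the auxiliary fact back into \cref{thm:E_3version}.

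Your sketch of the auxiliary fact, however, has two problems. First, the ``sandwich'' is misstated: since $B$ is required to be $G$-invariant and to contain $f(Y)$, it automatically contains $[f(Y)]_{E^X_G}$, so there is no room to choose $B$ strictly below the saturation, which is only analytic. Second, no transfinite peeling should be required; the machinery already in the paper suffices. The natural route, which the paper signals in the same Remarks section (``\cref{thm:CCS=IC} can be stated relative to an analytic set $A\subseteq X$''), is: apply that analytic relativization to $Z = [f(Y)]_{E^X_G}$. If Property (IC) fails on $Z$, the argument of \cref{thm:CCS=ICextended} yields a continuous reduction $\eta$ from some $E' \in \mathfrak{B}$ into $Z$; uniformizing the analytic relation $\{(w,y) : (\eta(w), f(y)) \in E^X_G\}$ by Jankov--von Neumann gives a Baire-measurable reduction from $E'$ to $E = (f\times f)^{-1}(E^X_G)$, which is impossible by \cref{th:base for nonCCS} and \cref{thm:GenErg} because $E$ is CCS. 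If instead Property (IC) holds on $Z$, each of the analytic $\fH$-independent pieces covering $Z$ can be extended to a Borel $G$-invariant union of $\fH$-independent Borel pieces via \cref{pr:Borel independent}, and intersecting over $k$ the unions over $l$ gives the required Borel $G$-invariant $B \supseteq Z$ satisfying Property (IC), hence CCS by \cref{th:IC ->CCS}.
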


\bibliographystyle{alpha}
\bibliography{main}

\appendix

\section{Technical results}\label{app:technical}

\begin{lemma}\label{lm:weak boundedness}
Let $X$ be a Polish $G$-space and $C\subseteq X$ be a $G$-lg comeager set such that for every $x\in C$ there is an open neighborhood $\Delta$ of $1_G$ such that $\overline{C\cap \Delta\cdot x}\cap C\subseteq [x]_{E^X_G}$.
Then the Polish topology on $X$ is $G$-bounded. 
\end{lemma}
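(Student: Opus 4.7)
The plan is to reduce $G$-boundedness at an arbitrary point of $X$ to the hypothesis applied at a conveniently chosen point of $C$ in the same orbit, using a two-step density argument to absorb the extra ``$\cap C$'' inside the closure. Fix $x\in X$. Since $C$ is $G$-lg comeager, $G(x,C)$ is comeager in $G$, so I can pick $g_0\in G$ with $y:=g_0\cdot x\in C$; applying the hypothesis at $y$ yields an open neighborhood $\Delta'$ of $1_G$ with $\overline{C\cap \Delta'\cdot y}\cap C\subseteq [y]_{E^X_G}$, and continuity of multiplication lets me choose a smaller open neighborhood $\Delta''$ of $1_G$ with $\Delta''\cdot \Delta''\subseteq \Delta'$.

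The technical core is to upgrade the hypothesis to $\overline{\Delta''\cdot y}\subseteq [y]_{E^X_G}$, and two applications of $G$-lg comeagerness do the job. First, $G(y,C)\cap \Delta'$ is comeager, hence dense, in $\Delta'$, so any open $U\subseteq X$ meeting $\Delta'\cdot y$ has a nonempty open preimage under the continuous map $h\mapsto h\cdot y$ from $\Delta'$ to $X$ that then meets $G(y,C)$; this shows $C\cap \Delta'\cdot y$ is dense in $\Delta'\cdot y$ and hence $\overline{C\cap \Delta'\cdot y}=\overline{\Delta'\cdot y}$. Second, given $z\in \overline{\Delta''\cdot y}$, I pick $g\in \Delta''\cap G(z,C)$ (nonempty since $G(z,C)$ is comeager) and write $z=\lim_n g_n\cdot y$ with $g_n\in \Delta''$; continuity of the action gives $g\cdot z=\lim_n (gg_n)\cdot y$ with $gg_n\in \Delta''\cdot \Delta''\subseteq \Delta'$, so $g\cdot z\in \overline{\Delta'\cdot y}=\overline{C\cap \Delta'\cdot y}$. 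Combined with $g\cdot z\in C$, the hypothesis gives $g\cdot z\in [y]_{E^X_G}$, whence $z\in [y]_{E^X_G}$.

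Finally, I transfer back to $x$: conjugation by $g_0$ is a self-homeomorphism of $G$ fixing $1_G$, so $g_0^{-1}\Delta'' g_0$ is an open neighborhood of $1_G$, and for any open $\Delta\ni 1_G$ contained in it I have $\Delta\cdot x=\Delta g_0^{-1}\cdot y\subseteq g_0^{-1}\Delta''\cdot y$; since $g_0^{-1}$ acts on $X$ as a homeomorphism preserving orbits, taking closures yields $\overline{\Delta\cdot x}\subseteq g_0^{-1}\cdot\overline{\Delta''\cdot y}\subseteq [x]_{E^X_G}$, establishing $G$-boundedness. The step I expect to be the main obstacle is the upgrade in the middle paragraph, since the hypothesis controls only those limits of elements of $C$ that themselves lie in $C$; the crux is to invoke $G$-lg comeagerness twice—once to push $C\cap \Delta'\cdot y$ densely into $\Delta'\cdot y$, and once more to move the target limit point $z$ into $C$ by a small group element—where the shrinking $\Delta''\cdot \Delta''\subseteq \Delta'$ is precisely what keeps the translated limit inside $\overline{\Delta'\cdot y}$.
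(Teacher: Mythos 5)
Your proof is correct and follows essentially the same three-step approach as the paper's: a density argument to show $\overline{C\cap \Delta'\cdot y}=\overline{\Delta'\cdot y}$, then pushing a limit point $z$ into $C$ via a small group element $g\in \Delta''\cap G(z,C)$ and using $\Delta''\Delta''\subseteq\Delta'$ to stay inside the controlled closure, and finally a conjugation argument to transfer from $y\in C$ back to the arbitrary point $x$. The only cosmetic difference is that the paper treats the first two steps for a general point of $C$ before conjugating, whereas you fix $x$ and its representative $y=g_0\cdot x\in C$ at the outset.
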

\begin{proof}
The proof consists of three steps.

{\bf (I)}.
Let $x\in C$ and $\Delta$ be any neighborhood of $1_G$.
We show that
$$\overline{C\cap \Delta\cdot x}=\overline{\Delta\cdot x}.$$
Let $y\in \overline{\Delta\cdot x}$.
Then there are $(g_n)_{n\in \I N}\subseteq \Delta$ such that $g_n\cdot x\to y$.
Fix a decreasing sequence $(U_n)_{n\in\I N}$ of open subsets of $X$ such that $\{y\}=\bigcap_{n\in \I N} U_n$.
We may assume (after passing to a subsequence) that $g_n\cdot x\in U_n$ for every $n\in \I N$.
Note that $G(x,U_n)$ is an open subset of $G$ because the action is continuous.
Moreover, we have that the open set $\Delta\cap G(x,U_n)$ is non-empty, since it contains $g_n$.
Now the assumption that $C$ is $G$-lg comeager guarantees that $G(x,C)\cap \Delta\cap G(x,U_n)\not= \emptyset$.
Pick $g'_n\in G(x,C)\cap \Delta\cap G(x,U_n)$.
Then we have $g'_n \in \Delta$ and $g'_n\cdot x\in C\cap U_n$.
This shows that $g'_n\cdot x\to y$ and consequently that $y\in \overline{C\cap \Delta\cdot x}$.

{\bf (II)}.
Let $x\in C$ and $\Delta'\subseteq \Delta$ be open neighborhoods of $1_G$ such that $\overline{C\cap \Delta\cdot x}\cap C\subseteq [x]_{E^X_G}$ and $\Delta'\cdot \Delta'\subseteq \Delta$.
We show that
$$\overline{C\cap \Delta'\cdot x}\subseteq [x]_{E^X_G}.$$
Let $(g_n)_{n\in \I N}\subseteq \Delta'$ be such that $g_n\cdot x\in C$ and $g_n\cdot x\to y$.
Note that $y$ need not be an element of $C$, but since $C$ is $G$-lg comeager set we have $G(y,\Delta')$ is comeager in $\Delta'$.
Pick $g\in G(y,\Delta')$.
Then we have $y_0=g\cdot y\in C$.
It is clearly enough to show that $y_0\in [x]_{E^X_G}$.
We have $g\cdot g_n\cdot x\to g\cdot y=y_0$ because $g$ acts by homeomorphism on $X$, and $g\cdot g_n\in \Delta'\cdot \Delta'\subseteq \Delta$ for every $n\in \I N$.
This shows that $y_0\in \overline{\Delta\cdot x}\cap C$.
By step {\bf (I)} and the assumption we have
$$y_0\in \overline{\Delta\cdot x}\cap C=\overline{C\cap V\cdot x}\cap C\subseteq [x]_{E^X_G}$$
and the claim follows.

{\bf (III)}.
Let $x\in X$.
There is $x_0\in C$ and $g\in G$ such that $g\cdot x_0=x$ because $C$ is $G$-lg comeager.
By {\bf (I)}, {\bf (II)} and the assumption on $C$ there is an open neighborhood $\Delta_0$ of $1_G$ such that $\overline{\Delta_0\cdot x_0}\subseteq [x_0]_{E^X_G}$.
Let $\Delta$ be an open neighborhood of $1_G$ such that $g^{-1}\Delta g\subseteq \Delta_0$.
Let $(g_n)_{n\in \I N}\subseteq \Delta$ be such that $g_n\cdot x\to y\in X$.
Since $g^{-1}$ acts on $X$ by homeomorphism we have that
$$\left(g^{-1}\cdot g_n\cdot g\right)\cdot x_0=g^{-1}\cdot (g_n\cdot x)\to g^{-1}\cdot y.$$
This implies that $g^{-1}\cdot y\in [x_0]_{E^{X}_G}$ and consequently that $y\in [x]_{E^X_G}$.
\end{proof}

\begin{lemma}\label{lm:local continuity}
Let $Y$ be a Polish $H$-space and $X$ be a Polish $G$-space such that $E^X_G$ is Borel.
Suppose that $\varphi:Y\to X$ is a Borel homomorphism from $E^Y_H$ to $E^X_G$, i.e., $(y_0,y_1)\in E^Y_H \ \Rightarrow \ (\varphi(y_0),\varphi(y_1))\in E^X_G$.
Then there is a Borel $H$-lg comeager set $C\subseteq Y$ such that
\begin{itemize}
	\item for every $y\in C$ and every $\Delta\subseteq G$, open neighborhood of $1_G$, there is $\Delta'\subseteq H$, an open neighborhood of $1_H$, such that
$$\varphi(h\cdot y)\in \Delta\cdot \varphi(y)$$
whenever $h\in \Delta'$ and $h\cdot y\in C$.
\end{itemize}
\end{lemma}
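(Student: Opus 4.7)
The plan is to reduce to the case that $\varphi$ is continuous, via the Becker--Kechris change-of-topology theorem (see~\cite[Theorem~5.1.8]{Gao}), and then take $C=Y$. Concretely, fix a countable base $(V_n)_{n\in\I N}$ for the topology on $X$ and apply the theorem to the Borel family $\{\varphi^{-1}(V_n)\}_{n\in\I N}\subseteq Y$: this produces a Polish topology $\tau'$ on $Y$ that is finer than the given one, has the same Borel $\sigma$-algebra, keeps the $H$-action continuous, and in which each $\varphi^{-1}(V_n)$ is clopen. In particular $\varphi:(Y,\tau')\to X$ is continuous.

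None of the objects mentioned in the conclusion of the lemma---the $H$-action on $Y$, the Borel $\sigma$-algebra on $Y$, the map $\varphi$, and the topologies on $G$, $H$, and $X$---is disturbed by replacing the original topology on $Y$ with $\tau'$; in particular, the notion of ``$H$-lg comeager'' depends only on the $H$-action and on the Borel structure. So I may assume $\varphi$ is continuous, and take $C=Y$, which is Borel and trivially $H$-lg comeager (since $G(y,Y)=H$ for every $y\in Y$). Given $y\in Y$ and an open neighborhood $\Delta$ of $1_G$, the set $V:=\Delta\cdot\varphi(y)$ is open in $X$ by continuity of the $G$-action, and the map $\psi_y:H\to X$ defined by $\psi_y(h):=\varphi(h\cdot y)$ is continuous as the composition of $\varphi$ with the continuous orbit map $h\mapsto h\cdot y$. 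Thus $\Delta':=\psi_y^{-1}(V)$ is an open neighborhood of $1_H$, and every $h\in\Delta'$ satisfies $\varphi(h\cdot y)\in V=\Delta\cdot\varphi(y)$; the hypothesis ``$h\cdot y\in C$'' is vacuous.

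The only real subtlety is justifying that the change of topology is legitimate, i.e., that the conclusion of the lemma is invariant under passing to a finer $H$-Polish topology with the same Borel $\sigma$-algebra; this is immediate from the statement. A more hands-on alternative would start from a $\tau$-comeager Borel set $D$ on which $\varphi|_D$ is continuous (guaranteed by Baire measurability of $\varphi$), but the natural Vaught-transform candidate $\{y:\forall^* h\in H\ h\cdot y\in D\}$ is $H$-invariant rather than $H$-lg comeager, so the topology-refinement route seems to be the cleanest.
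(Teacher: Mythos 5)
Your topology-refinement step is where the argument breaks. You invoke Becker--Kechris change-of-topology on the family $\{\varphi^{-1}(V_n)\}$ to obtain a finer \emph{$H$-Polish} topology making each $\varphi^{-1}(V_n)$ clopen. But Becker--Kechris Theorem~5.1.8 (and likewise \cite[Theorem~4.3.4]{Gao}) requires the Borel sets to be $G$-\emph{invariant}; for non-invariant Borel sets, what one can make open in a finer $G$-Polish topology is only their Vaught transform $A^{\Delta U}=\{x:\exists^* g\in U\ g\cdot x\in A\}$ (\cite[Theorem~4.3.3]{Gao}), not the sets themselves. Since $\varphi^{-1}(V_n)$ has no reason to be $H$-invariant, the theorem simply does not apply. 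Indeed, the conclusion you want is false in general: take $H=\mathbb{R}$ acting on $Y=\mathbb{R}$ by translation. If $\sigma$ is any finer $H$-Polish topology on $Y$ and $A\subseteq Y$ is $\sigma$-open, then continuity of the action forces $\{h\in\mathbb{R}:h+y\in A\}=A-y$ to be open in $\mathbb{R}$ for every $y$; so $A$ must already be open in the usual topology. Hence no non-open Borel set can be rendered open, and a Borel $\varphi$ as simple as $\mathbbm{1}_\mathbb{Q}$ cannot be made continuous by passing to a finer $\mathbb{R}$-Polish topology. You may be thinking of the change-of-topology theorem for Polish spaces without a group action, where this \emph{is} possible --- but the constraint of keeping the $H$-action continuous is exactly what ruins it.

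Note that this is also why the paper's own \cref{lm:global continuity} only delivers $\tau$-continuity of $\varphi$ on a Borel $H$-lg comeager set $C$, not on all of $Y$; global continuity in a finer $H$-Polish topology is not attainable. Once one has \cref{lm:global continuity}, your closing argument \emph{would} go through relativized to $C$: for $y\in C$, $\{z\in C:\varphi(z)\in\Delta\cdot\varphi(y)\}$ is $U\cap C$ for some $\tau$-open $U\ni y$, so $\Delta'=\{h:h\cdot y\in U\}$ works, with the restriction ``$h\cdot y\in C$'' now carrying real content rather than being vacuous. But that route uses \cref{lm:global continuity} as a black box, and the point of the present lemma in the paper is an independent, direct Vaught-transform argument: one fixes $\Delta$, forms $S=\{(y,h):(\varphi(y),\varphi(h\cdot y))\in R^X_\Delta\}$, shows the sets $C_\Delta=\{y:\exists k\ \forall^* h\in\Delta'_k\ (y,h)\in S\}$ are Borel $H$-lg comeager by a Baire-category argument using a $\tilde\Delta$-cover of $G$, and then takes $C=\bigcap_i C_{\Delta_i}$. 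Your dismissed ``hands-on alternative'' is in fact closer in spirit to what is needed; the obstruction you noticed (the natural Vaught transform being $H$-invariant rather than $H$-lg comeager) is real, but it is overcome by the more careful quantifier pattern ``$\exists k\ \forall^* h\in\Delta'_k$'' rather than the blunt ``$\forall^* h\in H$''.
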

\begin{proof}
Fix an open neighborhood $\Delta\subseteq G$ of $1_G$.
Recall that the assumption that that $E^X_G$ is Borel together with \cite[Theorem~7.1.2]{BeckerKechris} gives that $R^X_\Delta$ is a Borel relation.
Put
$$S'=\left(\varphi^{-1}\times \varphi^{-1}\right)\left(R^X_V\right)\subseteq Y\times Y$$
and 
$$S=\left\{(y,h)\in Y\times H:(y,h\cdot y)\in S'\right\}.$$
It follows that $S'$ and $S$ are Borel sets.

Let $(\Delta'_k)_{k\in \I N}$ be an open neighborhood basis at $1_H$.
Define
$$C_{k,\Delta}=\left\{y\in Y:\forall^* h\in \Delta'_k \ (y,h)\in S\right\}.$$
It follows from \cite[Theorem~16.1]{kechrisclassical} that $C_\Delta=\bigcup_{k} C_{k,\Delta}$ is a Borel subset of $Y$.

We show that $C_{\Delta}$ is an $H$-lg comeager set.
To this end pick $y\in Y$ and suppose that $H(y,C_\Delta)$ is not comeager in $H$.
By \cite[Proposition~8.26]{kechrisclassical} there is an open set $U\subseteq H$ such that $H(y,C_\Delta)$ is meager in $U$.
Let $x=\varphi(y)$ and pick a sequence $(g_m)_{m\in \I N}$ such that $\bigcup_{m\in \I N} \tilde{\Delta}\cdot g_m=G$ where $\tilde{\Delta}\cdot (\tilde{\Delta})^{-1}\subseteq \Delta$.
We have $\bigcup_{m\in \I N} \tilde{\Delta}\cdot g_m\cdot x=[x]_{E^X_G}$ and
$$[y]_{E^Y_H}\subseteq \bigcup_{m\in \I N} \varphi^{-1}(\tilde{\Delta}\cdot g_m\cdot x)$$
because $\varphi$ is a homomorphism from $E^Y_H$ to $E^X_G$.
It follows again from \cite[Proposition~8.26]{kechrisclassical} that there is $m\in \I N$ and an open set $U'\subseteq U$ such that $H(y,\varphi^{-1}(\tilde{\Delta}\cdot g_m\cdot x))$ is comeager in $U'$.
Let $h\in H$ and $k\in \I N$ be such that $h\in U'\cap \varphi^{-1}(\tilde{\Delta}\cdot g_m\cdot x)\setminus H(y,C_\Delta)$ and $\Delta'_k\cdot h\subseteq U'$.
We show that $z=h\cdot y\in C_{k,\Delta}$ and that contradicts the choice of $h$.
First note that $A=\{a\in \Delta'_k:a\cdot z\in \varphi^{-1}(\tilde{\Delta}\cdot g_m\cdot x)\}$ is comeager in $\Delta'_k$.
This is because
$$A\cdot h=\Delta'_k\cdot h\cap H(y,\varphi^{-1}(\tilde{\Delta}\cdot g_m\cdot x))$$
and the latter is comaeger in $\Delta'_k\cdot h$ since $\Delta'_k\cdot h$ is an open subset of $U'$.
Pick $a\in A$.
We have $\varphi(z),\varphi(a\cdot z)\in \tilde{\Delta}\cdot g_m \cdot x$ by the definition of $A$ and $h$.
Therefore there are $r,s\in \tilde{\Delta}$ such that
$$s\cdot r^{-1}\cdot \varphi(z)=s\cdot g_m\cdot(r\cdot g_m)^{-1}\varphi(z)=s\cdot g_m\cdot x=\varphi(a\cdot z)$$
This shows that $(\varphi(z),\varphi(a\cdot z))\in R^X_\Delta$ and consequently $(z,a)\in S$.
Altogether, we have that $C_\Delta$ is a Borel $H$-lg comeager set.

Let $(\Delta_i)_{i\in \I N}$ be an open basis at $1_G$ and put
$$C=\bigcap_{i\in \I N} C_{\Delta_i}.$$
It follows from the previous paragraph that $C$ is a Borel $H$-lg comeager set.
Let $y\in C$ and $\Delta\subseteq G$ be an open neighborhood of $1_G$.
Take $i\in\I N$ such that $(\Delta_i)^{-1}\cdot \Delta_i\subseteq \Delta$ and $k\in \I N$ such that $y\in C_{k,\Delta_{i}}$.
Pick $h\in \Delta'_k$ such that $h\cdot y\in C$ and $l\in \I N$ such that $h\cdot y\in C_{l,\Delta_i}$.
We show that $\varphi(h\cdot y)\in \Delta\cdot \varphi(y)$.

Write $A=\{s\in \Delta'_k:\varphi(s\cdot y)\in \Delta_i\cdot \varphi(y)\}$ and $B=\{r\in \Delta'_l:\varphi(r\cdot h\cdot y)\in \Delta_i\cdot \varphi(h\cdot y)\}$.
By the definition we have $y\in C_{k,\Delta_i}$ and $h\cdot y\in C_{l,\Delta_i}$, and consequently $A$ is comeager in $\Delta'_k$ and $B$ is comeager in $\Delta'_l$.
This implies that $A, B\cdot h$ are both comeager in $\Delta'_k\cap \Delta'_l\cdot h$ since $h\in \Delta'_k$ and $\Delta'_k,\Delta'_l$ are open sets.
Let $a\in A\cap B\cdot h$.
Then we have $\varphi(a\cdot y)\in \Delta_i\cdot \varphi(y)$ and $\varphi(a\cdot y)=\varphi((a\cdot h^{-1})\cdot h\cdot y)\in \Delta_i\cdot \varphi(h\cdot y)$.
Consequently, $\Delta_i\cdot \varphi(h\cdot y)\cap \Delta_i\cdot \varphi(y)\not=\emptyset$.
But that implies
$$\varphi(h\cdot y)\in (\Delta_i)^{-1}\cdot \Delta_i\cdot \varphi(y)\subseteq \Delta\cdot \varphi(y)$$
and the proof is finished.
\end{proof}

\begin{lemma}\label{lm:global continuity}
Let $Y$ be a Polish $H$-space and $X$ be a Polish space.
Suppose that $\varphi:Y\to X$ is a Borel map.
Then there is a $H$-lg comeager set $C\subseteq Y$ and a finer $H$-Polish topology $\tau$ on $Y$ such that
\begin{itemize}
	\item $\varphi\upharpoonright C$ is $\tau$-continuous.
\end{itemize}
\end{lemma}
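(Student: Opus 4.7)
The plan is to reduce the claim to the Becker--Kechris topological realization theorem for countable $H$-invariant Borel families, by invariantizing (via a Vaught-type transform) the Borel preimages $\varphi^{-1}(U_n)$ of a base of $X$, while simultaneously identifying an $H$-lg comeager Borel set $C$ on which the invariantization is faithful.

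Fix countable bases $(U_n)_{n\in\I N}$ of $X$ and $(V_k)_{k\in\I N}$ of open neighborhoods of $1_H$ in $H$, and set $A_n=\varphi^{-1}(U_n)$. It suffices to produce $C$ (Borel, $H$-lg comeager) and a finer $H$-Polish topology $\tau$ on $Y$ such that each $A_n\cap C$ is $\tau$-open relative to $C$, for then the preimage under $\varphi\upharpoonright C$ of every $U_n$ is $\tau$-open in $C$. For the set $C$, take
$$
C=\bigcap_{n\in\I N}\bigcup_{k\in\I N}\bigl\{y\in Y:\forall^{*}h\in V_k,\ \chi_{A_n}(h\cdot y)=\chi_{A_n}(y)\bigr\},
$$
which is Borel by \cite[Theorem~16.1]{kechrisclassical}. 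The inner event at $y$ asserts precisely that $1_H$ is a Baire-continuity point of the Borel $\{0,1\}$-valued function $\sigma_{n,y}:h\mapsto\chi_{A_n}(h\cdot y)$ on $H$. Using $\sigma_{n,g\cdot y}=\sigma_{n,y}\circ\rho_g$ (for right-translation $\rho_g$) and the fact that the set of Baire-continuity points of a $\{0,1\}$-valued Borel function on $H$ is comeager---its complement is contained in the union of a meager symmetric difference of $\sigma_{n,y}^{-1}(1)$ with a regular open set and the nowhere dense boundary of that regular open---one concludes that $\{g\in H:g\cdot y\in C\}$ is comeager for every $y\in Y$. Hence $C$ is $H$-lg comeager.

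To build $\tau$, introduce for each pair $(n,k)$ the non-invariant Borel set $B_{n,k}=\{y:\forall^{*}h\in V_k,\ h\cdot y\in A_n\}$ together with its $H$-invariant Borel saturation $\tilde B_{n,k}$, obtained by applying a further Vaught-type invariantization indexed by the countable family of translates of $V_k$. Apply the Becker--Kechris topological realization theorem to the countable $H$-invariant family $(\tilde B_{n,k})_{n,k\in\I N}$ to obtain a finer $H$-Polish topology $\tau$ on $Y$ in which each $\tilde B_{n,k}$ is clopen. For $y_0\in C\cap A_n$ with witnessing $V_k$ from the definition of $C$, a suitable finite Boolean combination of the $\tilde B_{n',k'}$ produces a $\tau$-open neighborhood $W\ni y_0$ satisfying $W\cap C\subseteq A_n$, which gives $\tau$-continuity of $\varphi\upharpoonright C$.

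The main obstacle is the last step: arranging that the $H$-invariant saturations $\tilde B_{n,k}$ recover, after intersection with the $H$-lg comeager set $C$, the local-at-$1_H$ information encoded by the non-invariant $B_{n,k}$. The key technical point is that points in $C$ admit the local Baire-constancy of $\chi_{A_n}$ at $1_H$, and, by the translation-invariance of $\mathcal M_H$, this local structure survives invariantization in a controlled way that can be read off a finite Boolean combination of the invariant clopen sets delivered by Becker--Kechris. This is exactly where the strictly stronger $H$-lg comeager notion (as opposed to merely comeager) is essential.
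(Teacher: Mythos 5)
Your choice of the set $C$ coincides with the paper's: writing out $C_U = A_U\cup B_U$ with $A_U=\{y\in\varphi^{-1}(U):\exists k\,\forall^* h\in\Delta_k\ h\cdot y\in\varphi^{-1}(U)\}$ and $B_U$ the analogous set for the complement, the paper intersects $C_{U_n}$ over a base, which is exactly your $C$. Your argument that $C$ is $H$-lg comeager via Baire-continuity points and the identity $\sigma_{n,g\cdot y}=\sigma_{n,y}\circ\rho_g$ is correct and, if anything, cleaner than the paper's more hands-on Baire category argument.

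However, there is a genuine gap in your construction of the finer topology $\tau$, and it is precisely the one you flag as the "main obstacle" in your last paragraph. You propose to replace the non-invariant sets $B_{n,k}=\{y:\forall^* h\in V_k\ h\cdot y\in A_n\}$ by $H$-invariant saturations $\tilde B_{n,k}$ and apply the Becker--Kechris realization theorem for countable $H$-invariant Borel families. This cannot work: the $H$-saturation of a Borel set is a union of whole orbits, so within any single orbit it does not distinguish those points where the local Baire-constancy at $1_H$ holds from those where it does not. Intersecting with $C$ does not restore this, because $C$ is itself defined pointwise and says nothing about which of the locally-constant values a given point sees; indeed for a fixed $n$ an orbit can easily meet both $\widetilde A_{U_n}$ and $\widetilde B_{U_n}$ on comeager-in-orbit subsets, so the saturation of either is the entire orbit. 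The local-at-$1_H$ information is therefore lost irrecoverably, and no finite Boolean combination of invariant sets can separate $A_n\cap C$ from its complement in $C$ relative to $\tau$.

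What is actually needed is a strictly stronger topological realization statement, not for $H$-invariant sets, but for $\exists^*$-Vaught transforms over \emph{open} subsets of $H$: namely \cite[Theorem~4.3.3]{Gao}, which says that for any Borel $Z\subseteq Y$ and open $W\subseteq H$ one can find a finer $H$-Polish topology in which $\{y:\exists^* h\in W\ h\cdot y\in Z\}$ is open. The paper's proof uses this via the reformulation $\tilde R=\bigcup_k R_k=\bigcup_k S_k$, where $R_k=\{y:\forall^* h\in\Delta_k\ h\cdot y\in R\}$ and $S_k=\{y:\exists^* h\in\Delta_{k+1}\ h\cdot y\in R_k\}$; the equalities $R_k\subseteq S_{k+1}$ and $S_k\subseteq R_{k+1}$ follow from the inclusion $\Delta_{k+1}\cdot\Delta_{k+1}^{-1}\subseteq\Delta_k$. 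These $S_k$ are the sets to be made open — they are not $H$-invariant, and no invariantization step is involved. If you replace your invariant Becker--Kechris step with this tool, the rest of your argument goes through.
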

\begin{proof}
Let $U \subseteq X$ be an open set.
We find a finer $H$-Polish topology $\tau_U$ and a Borel $H$-lg comeager set $C_U$ such that $\varphi^{-1}(U)$ is relatively open in $C_U$, i.e., $\varphi^{-1}(U)\cap C_U=O_U\cap C_U$ for some $O_U\in \tau_U$.
Once we have this we finish the proof as follows.
Fix some open basis $(U_r)_{r\in \I N}$ of $X$.
Define $C=\bigcap_{r\in \I N} C_{U_r}$ and $\tau$ to be the topology generated by $\bigcup_{r\in \I N} \tau_{U_r}$.
It is easy to see that $C$ is a Borel $H$-lg comeager set and using \cite[Lemma~4.3.2]{Gao} we have that $\tau$ is a finer $H$-Polish topology on $Y$.
Moreover, for every $r\in \I N$ we have $O_{U_r}\in \tau_{U_r}\subseteq \tau$ and
$$\varphi^{-1}(U_r)\cap C= \varphi^{-1}(U_r)\cap C\cap C_{U_r}= O_{U_r}\cap C \cap C_{U_r}=O_{U_r}\cap C,$$
thus showing that $\varphi\upharpoonright C$ is $\tau$-continuous.

Let $(\Delta_k)_{k\in \I N}$ be an open neighborhood basis at $1_H$.
Define
\begin{itemize}
\item $A_U=\{y\in \varphi^{-1}(U):\exists k\in \I N \ \forall^* h\in \Delta_k \ h\cdot y\in \varphi^{-1}(U)\},$
\item $B_U=\{y\in Y\setminus\varphi^{-1}(U):\exists k\in \I N \ \forall^* h\in \Delta_k \ h\cdot y\in Y\setminus \varphi^{-1}(U)\},$
\item $\widetilde{A}_U=\{y\in Y:\exists k\in \I N \ \forall^* h\in \Delta_k \ h\cdot y\in \varphi^{-1}(U)\},$
\item $\widetilde{B}_U=\{y\in Y:\exists k\in \I N \ \forall^* h\in \Delta_k \ h\cdot y\in Y\setminus\varphi^{-1}(U)\}.$
\end{itemize}
It follows from \cite[Theorem~16.1]{kechrisclassical} that all the sets are Borel.
Moreover, we have $A_U\subseteq \widetilde{A}_U$, $B_U\subseteq \widetilde{B}_U$, $A_U\subseteq \varphi^{-1}(U)$ and $A_U\cap \widetilde{B}_U=\emptyset=B_U\cap\widetilde{A}_U$.
The equalities follow from the fact that if, for example, $y\in A_U\cap \widetilde{B}_U$, then there is $k\in \I N$ such that $H(y,\varphi^{-1}(U))$ and $H(y,Y\setminus \varphi^{-1}(U))$ are both comeager in $\Delta_k$.
This gives $h\in \Delta_k$ such that $h\cdot y\in \varphi^{-1}(U)\cap(Y\setminus \varphi^{-1}(U))$ and that is a contradiction.

Put $C_U=A_U\cup B_U$.
We show that $C_U$ is a Borel $H$-lg comeager set and that there is a finer $H$-Polish topology $\tau_U$ such that $\widetilde{A}_U,\widetilde{B}_U\in \tau_U$.
First we demonstrate how this finishes the proof.
Put $O_U=\widetilde{A}_U$.
Then we have
$$O_U\cap C_U=A_U\subseteq \varphi^{-1}(U)\cap C_U=\varphi^{-1}(U)\cap (A_U\cup B_U)=A_U=O_U\cap C_U$$
by the previous paragraph and the fact that $B_U\cap \varphi^{-1}(U)=\emptyset$.
Hence, $\varphi^{-1}(U)$ is relatively $\tau_U$-open in $C_U$.

First we show that $C_U$ is an $H$-lg comeager set.
Let $y\in Y$ and suppose that $H(y,C_U)$ is not comeager in $H$.
By \cite[Proposition~8.26]{kechrisclassical} there is an open set $W\subseteq H$ such that $H(y,C_U)$ is meager in $W$.
Note that $W\subseteq H(y,\varphi^{-1}(U))\cup H(y,Y\setminus \varphi^{-1}(U))$.
Therefore there is an open set $W'\subseteq W$ such that one of $H(y,\varphi^{-1}(U)), H(y,Y\setminus \varphi^{-1}(U))$ is comeager in $W'$.
Suppose, for example, that $H(y,Y\setminus \varphi^{-1}(U))$ is comeager in $W'$ (the other case is similar). 
Since $W'$ is open we find
$$a\in H(y,Y\setminus \varphi^{-1}(U))\cap W'\setminus H(y,C_U)$$
and $k\in \I N$ such that $\Delta_k\cdot a\subseteq W'$.
The set $P=\{h\in \Delta_k:h\cdot a\cdot y\in Y\setminus \varphi^{-1}(U)\}$ is comeager in $\Delta_k$.
This is because
$$P\cdot a=\Delta_k\cdot a\cap H(y,Y\setminus \varphi^{-1}(U))$$
and the latter set is comeager in $\Delta_k\cdot a$ because $\Delta_{k}\cdot a\subseteq W'$.
Note that $h\cdot a \cdot y\in Y\setminus \varphi^{-1}(U)$ whenever $h\in P$ and $a\cdot y\in Y\setminus \varphi^{-1}(U)$ by the choice of $a$.
We conclude that $a\cdot y \in B_U$ and that is a contradiction with $a\not \in H(y,C_U)$.

Next we show that there is a finer $H$-Polish topology $\tau_U$ such that $\widetilde{A}_U,\widetilde{B}_U\in \tau_U$.

\begin{claim}\label{cl:finer H-top}
Let $R\subseteq Y$ be a Borel set.
Then there is a finer $H$-Polish topology $\sigma$ such that
$$\tilde{R}=\{y\in Y:\exists k\in \I N \ \forall^* h\in \Delta_k \ h\cdot y\in R\}\in \sigma.$$
\end{claim}
\begin{proof}

Recall \cite[Theorem~4.3.3]{Gao}
\begin{itemize}
	\item [(I)] If $Z\subseteq Y$ is a Borel set and $W\subseteq G$ is open, then there is a finer $H$-Polish topology $\sigma'$ such that
	$$\{y\in Y:\exists^* h\in W \ h\cdot y\in Z\}\in \sigma'.$$
\end{itemize}
Define
$$R_k=\{y\in Y:\forall^* h\in \Delta_k \ h\cdot y\in R\}$$
for $k\in \I N$.
Then $R_k$ is a Borel set by \cite[Theorem~16.1]{kechrisclassical} and $\tilde{R}=\bigcup_{k\in \I N} R_k$.
Apply (I) above for $R_k$ and $\Delta_{k+1}$ to find a finer $H$-Polish topology $\sigma_k$ such that
$$S_k=\{y\in Y:\exists^* h\in \Delta_{k+1} \ h\cdot y\in R_k\}\in \sigma_k.$$
Set $\sigma$ to be the topology generated by $\bigcup_{k\in \I N}\sigma_k$.
Then $\sigma$ is a finer $H$-Polish topology by \cite[Lemma~4.3.2]{Gao} and $S=\bigcup_{k\in \I N} S_k\in \sigma$.
To finish the proof we show that $S=\tilde{R}$.

Let $y\in S_k$.
By the definition we have that there is $h\in \Delta_{k+1}$ such that $h\cdot y=z\in R_k$ and consequently that $H(z,R)$ is comeager in $\Delta_k$.
Since $\Delta_{k+1}\cdot (\Delta_{k+1})^{-1}\subseteq \Delta_k$ we have that $\Delta_{k+1}\cdot h^{-1}\subseteq \Delta_k$.
This gives that 
$$\Delta_{k+1}\cdot h^{-1}\cap H(y,R)\cdot h^{-1}=\Delta_{k+1}\cdot h^{-1} \cap H(z,R)$$
is comeager in $\Delta_{k+1}\cdot h^{-1}$ and therefore $H(y,R)$ is comeager in $\Delta_{k+1}$.
Now it is easy to see that $y\in R_{k+1}\subseteq \tilde{R}$.

Suppose now that $y\in R_k$.
Then we have $y\in R_{k+1}$ and $H(y,R)$ is comeager in $\Delta_k$.
Pick $h\in \Delta_{k+1}$.
Then the set
$$\Delta_{k+1}\cdot h\cap H(h\cdot y,R)\cdot h=\Delta_{k+1}\cdot h\cap H(y,R)$$
is comeager in $\Delta_{k+1}\cdot h$ because $\Delta_{k+1}\cdot h\subseteq \Delta_k$ and consequently $H(h\cdot y,R)$ is comeager in $\Delta_{k+1}$.
This shows that $h\cdot y\in R_{k+1}$ for every $h\in \Delta_{k+1}$.
Consequently, $y\in S_{k+1}\subseteq S$ and that finishes the proof.
\end{proof}

Note that we can apply \cref{cl:finer H-top} to both sets $\widetilde{A}_U$ and $\widetilde{B}_U$ to get finer $H$-Polish topologies $\sigma_0$ and $\sigma_1$ such that $\widetilde{A}_U\in \sigma_0$ and $\widetilde{B}_U\in \sigma_1$.
Then by \cite[Lemma~4.3.2]{Gao} we have that the topology $\tau_U$ that is generated by $\sigma_0\cup\sigma_1$ is a finer $H$-Polish topology that contains $\widetilde{A}_U$, $\widetilde{B}_U$ and the proof is finished.
\end{proof}

\begin{lemma}\label{lm:rel open}
Let $X$ be a Polish $G$-space and $(A_k)_{k\in \I N}$ be a sequence of pairwise disjoint Borel subsets of $X$ such that $\bigcup_{k\in \I N}A_k=X$
Then there is a Borel $G$-lg comeager set $C\subseteq X$ such that for every $k\in \I N$ and $x\in C\cap A_k$ there is an open neighborhood $\Delta$ of $1_G$ such that
$$C\cap \Delta\cdot x\subseteq A_k.$$
\end{lemma}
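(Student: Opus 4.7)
The plan is to build $C$ as the set of points $x \in A_{k(x)}$ whose $G$-translates into $A_{k(x)}$ form a topologically large collection of group elements, where $k(x)$ denotes the unique index with $x \in A_{k(x)}$. Fix a countable decreasing open neighborhood base $(V_n)_{n \in \I N}$ at $1_G$. For each $k \in \I N$ define
$$\tilde{A}_k = \left\{x \in X : \exists n \in \I N \ \forall^* g \in V_n \ g \cdot x \in A_k\right\},$$
which is Borel by \cite[Theorem~16.1]{kechrisclassical}, and set
$$C = \bigcup_{k \in \I N} (A_k \cap \tilde{A}_k).$$

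To check that $C$ is $G$-lg comeager, fix $x \in X$. The sets $\{G(x, A_k)\}_{k\in \I N}$ form a Borel partition of $G$; by the Baire property write $G(x, A_k) = O_k \triangle N_k$ with $O_k$ open and $N_k$ meager. Then $W = \bigcup_k O_k \setminus \bigcup_k N_k$ is comeager in $G$ (since $G = \bigcup_k G(x,A_k) \subseteq \bigcup_k O_k \cup \bigcup_k N_k$). For any $g \in W$ there is a unique $k$ with $g \in G(x, A_k)$, and since $g \in O_k$ with $O_k$ open, some $V_n \cdot g \subseteq O_k$, so $G(x, A_k)$ is comeager in $V_n \cdot g$. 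Translating, this says $g \cdot x \in \tilde A_k$, hence $g \cdot x \in C$. Thus $G(x, C) \supseteq W$ is comeager.

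Finally, I verify the key property. Let $x \in C \cap A_k$. Since $x \in \tilde A_k$, pick $n$ with $\forall^* g \in V_n$, $g \cdot x \in A_k$, and set $\Delta = V_n$. Suppose $y = g \cdot x \in C \cap \Delta \cdot x$ with $g \in V_n$, and let $k'$ satisfy $y \in A_{k'}$. Since $y \in C$, also $y \in \tilde A_{k'}$, so there exists $n'$ with $\forall^* h \in V_{n'}$, $h \cdot y \in A_{k'}$, equivalently $\forall^* g' \in V_{n'} \cdot g$, $g' \cdot x \in A_{k'}$. Choose $n''$ large enough that $V_{n''} \cdot g \subseteq V_n \cap V_{n'} \cdot g$, which is possible because $V_n \cdot g^{-1}$ and $V_{n'}$ are both open neighborhoods of $1_G$. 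Then on the nonempty open set $V_{n''} \cdot g$, comeager many $g'$ satisfy $g' \cdot x \in A_k$ and comeager many satisfy $g' \cdot x \in A_{k'}$; since $A_k \cap A_{k'} = \emptyset$ whenever $k \ne k'$, we must have $k' = k$ and therefore $y \in A_k$.

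The main point requiring care is the Baire-category argument showing that $C$ is $G$-lg comeager, which rests on the standard observation that a Borel partition of a Polish space has a comeager set of points at which locally one piece is comeager. The rest of the proof is routine manipulation of the neighborhood base at $1_G$.
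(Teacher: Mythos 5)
Your proof is correct. It is, however, a genuinely more self-contained argument than the one in the paper, so a brief comparison is in order. The paper applies the general-purpose \cref{lm:global continuity} to the Borel coding map $\varphi : X \to 2^{\I N}$, $\varphi(x)(k)=1 \Leftrightarrow x\in A_k$: that lemma hands back a $G$-lg comeager $C$ together with a \emph{finer} $G$-Polish topology $\tau$ on which $\varphi\upharpoonright C$ is continuous, so each $C\cap A_k$ is relatively $\tau$-open in $C$, and then continuity of the $\tau$-action of $G$ immediately produces the neighborhood $\Delta$ with $\Delta\cdot x \subseteq U$. You instead inline the content of that lemma: your set $C=\bigcup_k (A_k\cap\tilde A_k)$ is precisely the Vaught-transform set that \cref{lm:global continuity} builds under the hood (specialized to the partition $(A_k)$), and you verify both the comeagerness of $C$ and the local stabilization property directly by Baire category in $G$, without introducing a refined topology. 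Both arguments are correct and rest on the same idea — on a comeager set of translates the partition is locally determined by a category quantifier; your version is more elementary and requires no topology-refinement machinery, while the paper's version gets the result as an essentially one-line corollary of a lemma it needs elsewhere (for \cref{th:bounded downward closed}). Your comeagerness argument via the Baire property of the partition $(G(x,A_k))_k$ of $G$ and the translation of the category quantifier from $V_n\cdot g$ to $V_n$ are exactly right, and the final step correctly exploits disjointness of $A_k, A_{k'}$ on the open set $V_{n''}\cdot g$ where both category conditions hold.
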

\begin{proof}
Define a Borel map $\varphi:X \to 2^\I N$ where $\varphi(x)(k)=1$ if and only if $x\in A_k$.
Then \cref{lm:global continuity} gives a Borel $G$-lg comeager set $C\subseteq X$ and a finer $G$-Polish topology $\tau$ on $X$ such that $\varphi\upharpoonright C$ is $\tau$-continuous.

Let $k\in \I N$ and $x\in C\cap A_k$.
Since the map $\varphi\upharpoonright C$ is $\tau$-continuous there must be $\tau$-open set $U$ such that $C\cap A_k=U\cap C$.
Note that $\tau$ is a $G$-Polish topology and therefore we find $\Delta$ open neighborhood of $1_G$ such that $\Delta\cdot x\subseteq U$.
Altogether we have $C\cap \Delta\cdot x\subseteq C\cap U=C\cap A_k\subseteq A_k$ and the proof is finished.
\end{proof}

\begin{lemma}\label{pr:Borel independent}
Let $G$ be a tsi Polish group, $X$ be a Polish $G$-space and $A$ be a $\mathcal{H}_{k+2,m}$-independent analytic subset of $X$ for some $k,m\in \I N$.
Then there is a Borel $G$-invariant set $B\subseteq X$ such that $A\subseteq B$ and a sequence $(B_\ell)_{\ell\in \I N}$ of $\mathcal{H}_{k,m+2}$-independent Borel subsets of $X$ such that $\bigcup_{\ell\in \I N} B_\ell=B$.
\end{lemma}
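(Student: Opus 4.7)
The plan is to first Borel-extend the analytic $A$ to a set $A'$ preserving $\fH_{k+2,m}$-independence, then take the pieces to be translates $V_\ell\cdot A'$ by small open $V_\ell\subseteq\Delta_{k+2}$ together with their translates by a countable dense subset of $G$. Borelness of $V_\ell\cdot A'$ will be obtained by refining the $G$-Polish topology on $X$ to make $A'$ open, so that $V_\ell\cdot A'$ is open (as a union of opens $v\cdot A'$) and hence Borel. The two-sided invariance of $d$ will then convert a $\Delta_{m+2}$-jump in $V_\ell\cdot A'$ into a $\Delta_m$-jump in $A'$, where the independence hypothesis applies.

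Carrying this out, I first use that $E^X_G$ is Borel (so each $\fH_{k+2,m}$ is Borel) and the hereditary $\Pi^1_1$-on-$\Sigma^1_1$ nature of ``being $\fH_{k+2,m}$-independent'' to apply $\Sigma^1_1$-reflection and obtain a Borel $A'\supseteq A$ that is $\fH_{k+2,m}$-independent. Next I refine the $G$-Polish topology on $X$ to one in which $A'$ is open, using the Becker--Kechris-style refinement theorems available in this setting. Fix a countable $\tfrac{1}{2^{m+3}}$-dense subset $\{g_\ell\}\subseteq\Delta_{k+2}$, set $V_\ell:=\Delta_{k+2}\cap\{g\in G:d(g,g_\ell)<\tfrac{1}{2^{m+3}}\}$ so that $V_\ell\subseteq\Delta_{k+2}$, $\bigcup_\ell V_\ell=\Delta_{k+2}$ and $\operatorname{diam}_d V_\ell<\tfrac{1}{2^{m+2}}$, and enumerate a countable dense subset $\{g'_n\}$ of $G$. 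The candidate pieces are $B_{n,\ell}:=g'_n\cdot V_\ell\cdot A'$; each is open in the refined topology (as a union of opens $g'_n v\cdot A'$), hence Borel. To see $\fH_{k,m+2}$-independence of $V_\ell\cdot A'$ (which transfers to each $B_{n,\ell}$ by $G$-invariance of the dihypergraph), consider a $\Delta_{m+2}$-jump $(y_0,\ldots,y_n)$ in $V_\ell\cdot A'$, write $y_i=v_i x_i$ with $v_i\in V_\ell$, $x_i\in A'$, and $y_{i+1}=h_{i+1}y_i$ with $h_{i+1}\in\Delta_{m+2}$. Two-sided invariance gives
\[d(v_{i+1}^{-1}h_{i+1}v_i,1_G)\le d(h_{i+1},1_G)+d(v_{i+1},v_i)<\tfrac{1}{2^{m+2}}+\tfrac{1}{2^{m+2}}<\tfrac{1}{2^m},\]
so after excising repetitions $(x_0,\ldots,x_n)$ is a $\Delta_m$-jump in $A'$; the $\fH_{k+2,m}$-independence of $A'$ produces $g\in\Delta_{k+2}$ with $x_n=g\cdot x_0$, whence $y_n=v_n g v_0^{-1}\cdot y_0$ with $v_n g v_0^{-1}\in\Delta_{k+2}^3\subseteq\Delta_k$.

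Setting $B:=\bigcup_{n,\ell}B_{n,\ell}$, the density of $\{g'_n\}$ in $G$ gives $\bigcup_n g'_n\Delta_{k+2}=G$ and hence $B=G\cdot A'=\bigcup_{g\in G}g\cdot A'$, which is open in the refined topology, hence Borel, $G$-invariant, and contains $A$. The main obstacle will be coordinating the two refinements -- the $\Sigma^1_1$-reflection that produces a Borel $A'$ preserving the independence, and the topology refinement that makes $A'$ open in a $G$-Polish topology. Each is individually standard, and they fit together because the Borel $\sigma$-algebra on the standard Borel space $X$ is unchanged by any $G$-Polish refinement, so ``Borel in the refined topology'' coincides with ``Borel'' throughout, and the uncountable unions $V_\ell\cdot A'$ and $G\cdot A'$ remain Borel because they are genuinely topologically open.
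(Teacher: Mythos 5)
Your proof is correct, but it takes a genuinely different route from the paper's. The paper first fattens $A$ to the analytic set $A' = \{x : \exists g\in\Delta_{m+2}\ g\cdot x\in A\}$, losing one level of independence ($\fH_{k+2,m}\to\fH_{k+1,m+2}$), reflects $A'$ to a Borel $\fH_{k+1,m+2}$-independent $D'$, and then replaces $D'$ by the Vaught-transform-like set $D=\{x:\exists r\ \forall^*g\in\Delta_r\ g\cdot x\in D'\}$, which simultaneously is Borel, loses one more level ($\to\fH_{k,m+2}$), and — crucially — has $G(x,D)$ open; the latter is what makes $B=\bigcup_n g_n\cdot D$ $G$-invariant. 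You instead reflect $A$ directly (keeping the full $\fH_{k+2,m}$-independence), then invoke the Becker--Kechris change-of-topology theorem to make the Borel $A'$ open in a finer $G$-Polish topology, so that the uncountable unions $V_\ell\cdot A'$ and $G\cdot A'$ are topologically open and therefore Borel; the full loss $k+2\to k$ happens in one step via your $\operatorname{diam} V_\ell<2^{-(m+2)}$ perturbation estimate, and $G$-invariance of $B=G\cdot A'$ is immediate. Both routes use reflection, two-sided (conjugacy) invariance of the $\Delta_k$'s, and density of a countable subset of $G$; the trade-off is that the paper's $D$-trick packages Borelness and open stabilizers into a single category argument, whereas your approach replaces that with the topology refinement machinery — arguably more transparent, but an extra piece of technology. (Your excision of repetitions to stay out of $\diag_X$ and the degenerate case $x_0=x_n$ are handled correctly, and the arithmetic $\Delta_{k+2}^3\subseteq\Delta_k$ checks out.)
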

\begin{proof}
We may assume that $k+2\le m$.
Define
$$A'=\left\{x\in X:\exists g\in \Delta_{m+2} \ g\cdot x\in A \right\}.$$
Then it is easy to see that $A'$ is an analytic subset of $X$.
Let $x\in \mathcal{J}(\Delta_{m+2})\cap (A')^{<\I N}$.
Pick any $y\in A^{<\I N}$ such that $|x|=|y|$ and $x_i\in \Delta_{m+2}\cdot y_i$ for every $i<|x|$.
Then we have
$$y_{i+1}\in \Delta^{-1}_{m+2}\cdot x_{i+1}\subseteq \Delta^{-1}_{m+2}\cdot \Delta_{m+2}\cdot  x_i\subseteq \Delta^{-1}_{m+2}\cdot \Delta_{m+2}\cdot \Delta_{m+2}\cdot y_i\subseteq \Delta_m\cdot y_i$$
for every $i<|y|-1$.
The set $A$ is $\mathcal{H}_{k+2,m}$-independent and that gives $y_{|y|-1}\in \Delta_{k+2}\cdot y_0$.
We have
$$x_{|x|-1}\in \Delta_{m+2}\cdot y_{|y|-1}\subseteq \Delta_{m+2}\cdot \Delta_{k+2}\cdot y_0\subseteq \Delta_{m+2}\cdot \Delta_{k+2}\cdot \Delta^{-1}_{m+2}\cdot x_0\subseteq \Delta_{k+1}\cdot x_0$$
and that shows that $A'$ is $\mathcal{H}_{k+1,m+2}$-independent.

By \cite[Theorem~28.5]{kechrisclassical} there is a Borel set $D'\subseteq X$ that is $\mathcal{H}_{k+1,m+2}$-independent and $A'\subseteq D'$.
Define
$$D=\left\{x\in X:\exists r\in \I N \ \forall^*g\in \Delta_r \ g\cdot x\in D'\right\}.$$
It follows from \cite[Theorem~16.1]{kechrisclassical} that $D$ is a Borel set.
The definition of $A'$ together with $A'\subseteq D'$ implies that $A\subseteq D$.
Similar argument as in previous paragraph shows that $D$ is $\mathcal{H}_{k,m+2}$-independent.
Moreover it is easy to see that if $G(x,D')$ is comeager in $\Delta_r$, then $y\in D$ for every $y\in \Delta_{r+1}\cdot x$.
This shows that $G(x,D)$ is open in $G$ for every $x\in X$.

Let $(g_n)_{n\in \I N}$ be a dense subset of $G$ such that $g_0=1_G$.
Define $B_n=g_n\cdot D$ and $B=\bigcup_{n\in \I N} B_n$.
Then $B$ is a $G$-invariant Borel set because $G(x,D)$ is nonempty open set whenever $x\in D$.
Moreover, $A\subseteq D=B_0\subseteq B$.

It remains to show that $B_n$ is $\mathcal{H}_{k,m+2}$-invariant for every $n\in \I N$.
Let $g\in G$, $\Delta$ be a conjugacy invariant open neighborhood of $1_G$ and $x,y\in X$.
Then $y\in \Delta\cdot x$ if and only if $g\cdot y\in \Delta\cdot (g\cdot x)$.
This shows that
$$g_n\cdot (\mathcal{J}(\Delta_{m+2})\cap D^{<\I N})=\mathcal{J}(\Delta_{m+2})\cap B^{<\I N}_n$$
where the action is extended coordinate-wise.
Consequently, $B_n$ is $\mathcal{H}_{k,m+2}$-independent for every $n\in \I N$.
This finishes the proof.
\end{proof}

\section{Refinements}\label{app:Refine}

In this section we prove \cref{lm:refinement}, \cref{lm:first ref} and \cref{lm:second ref}.
Our aim is to develop a technical machinery for finding subtrees of a given finitely uniformly branching tree that satisfy several constraints.
The techniques involve diagonalizing a sequence of trees and iterative application of Baire category argument.
First we define all the relevant notation and then prove two auxiliary lemmata.
These are then used in the proof of our main technical results.

Let $T$ be a finitely uniformly branching tree.
Let $(A,\alpha)\in [\I N]^{\I N}\times [T]$, where $[\I N]^\I N$ denotes the set of all infinite subsets of $\I N$.
We define $T_{(A,\alpha)}\subseteq T$ as
$$s\in T_{(A,\alpha)} \ \Leftrightarrow \ \forall n\not \in A \ s(n)=\alpha(n)$$
and denote as $[T_{(A,\alpha)}]$ the branches of $T_{(A,\alpha)}$.
Note that $[T_{(A,\alpha)}]$ is closed in $[T]$.

Write $(n_l)_{l\in\I N}=A$ for the increasing enumeration of $A$.
Then there is a unique finitely uniformly branching tree $S=S_{(A,\alpha)}$ and a unique map $e_{(A,\alpha)}:S\to T_{(A,\alpha)}$ that satisfy
\begin{itemize}
	\item $\ell^S_l=\ell^T_{n_l}$ for every $l\in \I N$,
	\item $|e_{(A,\alpha)}(s)|=n_{|s|}$
	\item $e_{(A,\alpha)}(s)(n_l)=s(l)$ for every $l<|s|$,
	\item $e_{(A,\alpha)}(s)(j)=\alpha(j)$ for every $j<n_{|s|}$ such that $j\not\in A$.
\end{itemize}
It is easy to verify that $e_{(A,\alpha)}$ extends to a unique continuous homeomorphism
$$\widetilde{e}_{(A,\alpha)}:[S]\to [T_{(A,\alpha)}]$$
that is a reduction from $\I G^S_s$ to $\I G^T_{e_{(A,\alpha)}(s)}$ for every $s\in S$.
This is because for every $l\in \I N$ and $t,t'\in S$ we have $t(l)=t'(l)$ if and only if $e_{(A,\alpha)}(t)(j)=e_{(A,\alpha)}(t')(j)$ for every $n_l\le j<n_{l+1}$.

\begin{lemma}\label{lm:colimit}
Let $(T_r)_{r\in \I N}$ be a sequence of finitely uniformly branching trees, $(A_r,\alpha_r)\in [\I N]^\I N\times [T_r]$ be such that $A_r\cap (r+1)=r+1$ for every $r\in \I N$ and $S_{(A_r,\alpha_r)}=T_{r+1}$ for every $r\in \I N$.
Then there is a finitely uniformly branching tree $S$ and a sequence of continuous maps $(\widetilde{\psi}_{r,\infty}:[S]\to [T_r])_{r\in \I N}$ such that 
\begin{enumerate}
	\item $\ell^S_r=\ell^{T_{r'}}_r$ for every $r\le r'\in \I N$, in particular, $S\cap \I N^{r+1}=T_r\cap \I N^{r+1}$ holds for every $r\in \I N$,
	\item for every $s\in S\cap \I N^{r+1}$ and $x\in [S_s]$ there is $y\in [(T_r)_s]$ such that $\widetilde{\psi}_{r,\infty}(t^\frown x)=t^\frown y$ whenever $t\in S\cap \I N^{r+1}$ for every $r\in \I N$,
	\item $\widetilde{\psi}_{r,\infty}=\widetilde{e}_{(A_r,\alpha_r)}\circ \widetilde{\psi}_{r+1,\infty}$ for every $r\in \I N$,
	\item $\widetilde{\psi}_{r,\infty}$ is a reduction from $\I G^S_s$ to $\I G^{T_r}_s$ for every $s\in S\cap \I N^r$.
\end{enumerate}
\end{lemma}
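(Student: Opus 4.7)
My plan is to first build the tree $S$ using the stability of branching numbers forced by the hypothesis. The assumption $A_r \cap (r+1) = r+1$ means that the first $r+1$ elements of the increasing enumeration of $A_r$ are $0,1,\ldots,r$, so the recursion $\ell^{T_{r+1}}_l = \ell^{T_r}_{n_l^{A_r}}$ gives $\ell^{T_{r+1}}_l = \ell^{T_r}_l$ for $l \le r$. Iterating this yields $\ell^{T_{r'}}_r = \ell^{T_r}_r$ whenever $r' \ge r$, so setting $\ell^S_r := \ell^{T_r}_r$ produces a finitely uniformly branching tree for which $S \cap \I N^{r+1} = T_{r'} \cap \I N^{r+1}$ for every $r' \ge r$, which is condition (1).

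Next I would define the maps by regarding the recursion (3) as definitional and tracing it coordinate by coordinate. Given $x \in [S]$ and $n \in \I N$, I iterate pairs $(r_k, n_k)$ starting from $(r_0, n_0) = (r, n)$: if $n_k \notin A_{r_k}$, set the value to $\alpha_{r_k}(n_k)$ and stop; otherwise write $n_k$ as the $l$-th element of $A_{r_k}$ and pass to $(r_k+1, l)$. Since the $l$-th element of any infinite subset of $\I N$ is at least $l$, the sequence $(n_k)$ is nonincreasing in $\I N$ and stabilizes at some $n_\ast$; if the iteration never terminates, set $\widetilde{\psi}_{r,\infty}(x)(n) := x(n_\ast)$. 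The crucial observation is that once $n_k \le r_k$, the trace becomes constant at $n_k$ forever (since $\{0,\ldots,r_k\} \subseteq A_{r_k}$ makes $n_k$ its own $n_k$-th element), so traces beginning with $n \le r$ stabilize instantly at $n_\ast = n$, confirming that $\widetilde{\psi}_{r,\infty}$ preserves the first $r+1$ coordinates. Continuity of the resulting map is immediate because each output coordinate depends on $x$ through at most one input coordinate, and (3) is built into the construction; the output is a genuine branch of $[T_r]$ since $\ell^{T_r}_n = \ell^{T_{r+k}}_{n_k}$ telescopes along the trace. For $n > r$ the stable value satisfies $n_\ast \ge r+1$, so the tail of $\widetilde{\psi}_{r,\infty}(t^\frown x)$ depends only on $x$ and not on the prefix $t$, which gives condition (2).

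The remaining task is the reduction part of condition (4). The homomorphism direction follows at once from (2), so what remains is to show that the tail map $\beta \mapsto y(\beta)$ furnished by (2) is injective, which I expect to be the main technical point. My plan is to produce, for each $m \ge 0$, some coordinate $n > r$ whose trace stabilizes at $n_\ast = r+1+m$, so that $y(\beta)(n) = \beta(m)$ recovers each coordinate of $\beta$ from the image. Such an $n$ can be constructed by running the trace backward from the fixed point $(r+m+1, r+m+1)$ via the step $(r+k, l) \mapsto (r+k-1, n_l^{A_{r+k-1}})$; this backward step is always well defined because each $A_{r+k-1}$ is infinite, and each coordinate produced this way lies in the corresponding $A_{r+k-1}$ by construction, so the forward trace from the resulting $n$ does reach the fixed point without terminating, yielding the desired injectivity.
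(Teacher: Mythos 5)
Your proof is correct and follows essentially the same route as the paper: you build $S$ with $\ell^S_r=\ell^{T_r}_r$ using the stability forced by $A_r\cap(r+1)=r+1$, and you define $\widetilde{\psi}_{r,\infty}$ by unfolding the recursion (3); your coordinate-by-coordinate trace $(r_k,n_k)$ is just a pointwise rendering of the paper's finite-string composition $e_{(A_r,\alpha_r)}\circ\cdots\circ e_{(A_{l-1},\alpha_{l-1})}(x\upharpoonright l)$, and the stabilization/telescoping observations match the paper's inductive verification of (2). One small improvement: for item (4) the paper simply asserts that (1) and (2) imply the reduction, whereas you actually exhibit injectivity of the tail map by running the trace backward from the fixed point $(r+m+1,r+m+1)$ to recover each coordinate $\beta(m)$ from some output coordinate $n>r$; this is a worthwhile explicit argument (one could alternatively deduce injectivity of $\widetilde{\psi}_{r,\infty}$ from (3) together with the fact that $\widetilde{\psi}_{n,\infty}$ preserves the first $n+1$ coordinates for each $n$, since every $\widetilde e$ is a homeomorphism).
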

\begin{proof}
Observe that if $r\le r'\in \I N$, then $\ell^{T_{r'}}_r=\ell^{T_r}_r$.
Define $\ell^S_r:=l^{T_r}_r$ and note that this defines a finitely unifmormly branching tree $S$ that satisfies (1).

For $s\in S\cap \mathbb{N}^r$ and $r\le r'\in \I N$ we define $\psi_{r',\infty}(s)=s$.
For $0\le r'<r$ we set inductively $\psi_{r',\infty}(s)=e_{(A_r,\alpha_r)}\circ \psi_{r'+1,\infty}$.
It is easy to see that $\psi_{r,\infty}=e_{(A_r,\alpha_r)}\circ \psi_{r+1,\infty}$ for every $r\in \I N$ and if $s\sqsubseteq t\in S$, then $\psi_{r,\infty}(s)\sqsubseteq \psi_{r,\infty}(t)$ for every $r\in \I N$.

Define 
$$\widetilde{\psi}_{r,\infty}(x)=\bigcup_{l\in \I N} \psi_{r,\infty}(x\upharpoonright l)$$
for every $x\in [S]$ and $r\in \I N$.
Note that $\widetilde{\psi}_{r,\infty}(x)$ is well defined element of $[T_r]$.
Moreover, we have
\begin{equation*}
    \begin{split}
        \widetilde{\psi}_{r,\infty}(x)= & \bigcup_{l\in \I N} \psi_{r,\infty}(x\upharpoonright l)=\bigcup_{l\in \I N}e_{(A_r,\alpha_r)}\circ \psi_{r+1,\infty}(x\upharpoonright l) \\
        = & \widetilde{e}_{(A_r,\alpha_r)}\left(\bigcup_{l\in \I N} \psi_{r+1,\infty}(x\upharpoonright l)\right)=(\widetilde{e}_{(A_r,\alpha_r)}\circ \widetilde{\psi}_{r+1,\infty})(x)
    \end{split}
\end{equation*}
for every $x\in [S]$ and $r\in \I N$.
This shows (3).

Note that (1) and (2) imply (4) and therefore it remains to show (2).
Let $s\in S\cap \I N^{r+1}$ and $x\in [S_s]$.
Put $y\in [(T_r)_s]$ such that
$$\widetilde{\psi}_{r,\infty}(s^\frown x)=s^\frown y.$$
Let $t\in S\cap \I N^{r+1}$ and $r+1< l\in \I N$.
It is clearly enough to show that $\psi_{r,\infty}(s^\frown x\upharpoonright l)(j)=\psi_{r,\infty}(t^\frown x\upharpoonright l)(j)$ for every $r+1\le j<l$.

We show inductively that $\psi_{r',\infty}(s^\frown x\upharpoonright l)(j)=\psi_{r',\infty}(t^\frown x\upharpoonright l)(j)$ for every $r+1\le j<l$, where $r\le r'\le l$.
By the definition we have 
$$\psi_{l,\infty}(s^\frown x\upharpoonright l)(j)=(s^\frown x\upharpoonright l)(j)=(t^\frown x\upharpoonright l)(j)=\psi_{l,\infty}(t^\frown x\upharpoonright l)(j)$$
for every $r+1\le j<l$.
Suppose that the claim holds for $r'+1$ where $r\le r'<l$.
Fix an enumeration $(m_p)_{p\in \I N}$ of $A_{r'}$.
Then for every $r+1\le j<l$ there is $p\in \I N$ such that $r\le p< l$ and $m_p\le j<m_{p+1}$.
This is because $A_{r'}\cap r+1=r+1$.
If $m_p=j$, then $m_p\not= r\not = p$ and we have
\begin{equation*}
    \begin{split}
        \psi_{r',\infty}(s^\frown x\upharpoonright l)(j)= & \left((e_{(A_{r'},\alpha_{r'})}\circ \psi_{r'+1,\infty})(s^\frown x\upharpoonright l)\right)(m_p)=\psi_{r'+1,\infty}(s^\frown x\upharpoonright l)(p) \\
        = & \psi_{r'+1,\infty}(t^\frown x\upharpoonright l)(p)=\left(e_{(A_{r'},\alpha_{r'})}\circ \psi_{r'+1,\infty}(t^\frown x\upharpoonright l)\right)(m_p) \\
        = & \psi_{r',\infty}(t^\frown x\upharpoonright l)(j)
    \end{split}
\end{equation*}
by the inductive assumption.
If $m_p<j$, then
\begin{equation*}
    \begin{split}
        \psi_{r',\infty}(s^\frown x\upharpoonright l)(j)= & \left((e_{(A_{r'},\alpha_{r'})}\circ \psi_{r'+1,\infty})(s^\frown x\upharpoonright l)\right)(j)=\alpha_{r'}(j) \\
        = & \left((e_{(A_{r'},\alpha_{r'})}\circ \psi_{r'+1,\infty})(t^\frown x\upharpoonright l)\right)(j)=\psi_{r',\infty}(t^\frown x\upharpoonright l)(j)
    \end{split}
\end{equation*}
and the proof is finished.
\end{proof}

\begin{lemma}\label{lm:main technical}
Let $T$ be a finitely uniformly branching tree, $(\mathcal{A}_k)_{k\in \I N}\subseteq [\I N]^\I N$ be such that $\mathcal{A}_k\cap \mathcal{A}_l=\emptyset$ for every $k\not=l$, ${\bf m},{\bf k}\in \I N$, ${\bf p}\in T\cap \I N^{\bf m}$, $(X_r)_{r\in \I N}$ be a sequence of subsets of $[T]$ with the Baire property such that $\bigcup_{r\in \I N} X_r=[T]$ and $(s_n)_{n\in\I N}\subseteq T$ be such that $\{s_n:n\in \mathcal{A}_k\}$ is dense in $T$ for every $k\in \I N$.
Then there is $(A,\alpha)\in [\I N]^\I N\times [T]$ such that, if we put $S=S_{(A,\alpha)}$, we have
\begin{enumerate}
	\item $A\cap {\bf m}={\bf m}$,
	\item for every $s\in S\cap \I N^{\bf m}$ there is $r\in \I N$ such that $s^\frown [S_s]\subseteq (\widetilde{e}_{(A,\alpha)})^{-1}(X_r)$,
	\item $\{v\in S:\exists n\in \mathcal{A}_k \ e_{(A,\alpha)}(v)=s_n\}$ is dense in $S$ for every $k\in \I N$,
	\item there is $n\in \mathcal{A}_{\bf k}$ such that ${\bf p}\sqsubseteq e_{(A,\alpha)}({\bf p})=s_n$.
\end{enumerate}
\end{lemma}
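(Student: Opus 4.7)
The plan is a Mycielski-style fusion argument. I will build the required pair $(A,\alpha)$ by specifying, in $\omega$ stages, longer and longer finite initial segments of $A$ together with finite partial descriptions of $\alpha$ on $\mathbb{N}\setminus A$. The requirements to satisfy split into three groups: the initialization conditions (1) and (4); the finitely many ``which $X_r$?'' conditions from (2), one per $s \in T\cap \mathbb{N}^{\bf m}$; and the countably many density conditions from (3), one for each pair $(k,v)$ with $v$ a vertex in the tree $S$ being built.

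Before starting, I invoke the Baire property to fix, for each $r$, an open set $U_r\subseteq [T]$ with $U_r\triangle X_r$ meager; then $\bigcup_r U_r$ is comeager and each $U_r\setminus M_r\subseteq X_r$ for a meager $M_r$. To initialize, I force $\{0,\dots,{\bf m}-1\}\subseteq A$ so that the first ${\bf m}$ levels of $S$ coincide with those of $T$ (giving (1)), and I pin down the initial part of the construction through some $s_n$ with $n\in \mathcal{A}_{\bf k}$ compatible with ${\bf p}$, which exists by the assumed density of $\{s_n:n\in \mathcal{A}_{\bf k}\}$ in $T$ (giving (4)). The remaining tasks are then enumerated in a single sequence that revisits each task cofinally often.

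At stage $i$ I handle the $i$-th task by extending the current finite data. For a partition task attached to $s\in T\cap \mathbb{N}^{\bf m}$: the subcylinder of $[T]$ corresponding to the current partial $(A,\alpha)$ meets some $U_r$ in a relatively open piece, so I specify enough further values of $\alpha$ (freezing coordinates outside $A$) to enter $U_r$, and then, using a standard ``dodging a meager set'' step, avoid $M_r$ to actually land inside $X_r$; I record this $r$ as $r(s)$. For a density task $(k,v)$: I use density of $\{s_n:n\in\mathcal{A}_k\}$ in $T$ to pick $n\in\mathcal{A}_k$ with $s_n$ an extension of the current image $e_{(A,\alpha)}(v)$ that is compatible with the frozen values of $\alpha$, then append to $A$ exactly the positions where $s_n$ branches beyond $|e_{(A,\alpha)}(v)|$ and specify $\alpha$ to match $s_n$ elsewhere, so that some $v'\sqsupseteq v$ in the new $S$ satisfies $e_{(A,\alpha)}(v')=s_n$. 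At every stage I add at least one new position to $A$, guaranteeing $A$ is infinite in the limit.

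The main obstacle is the interference between the partition step (which wants to freeze $\alpha$ into a comeager-in-$U_r$ subcylinder, avoiding $M_r$) and the density step (which wants to insert specific branching positions into $A$ and specific frozen values matching chosen $s_n$'s). These are compatible precisely because each density family $\{s_n:n\in\mathcal{A}_k\}$ is \emph{dense} in $T$: at any stage there are infinitely many eligible $s_n$ extending any prescribed finite pattern, so we always have enough wiggle room to simultaneously route the cylinder inside $X_{r(s)}$ and hit a chosen $s_n$, with the pairwise disjointness of $(\mathcal{A}_k)_{k\in\mathbb{N}}$ preventing collisions between different density requirements. A careful diagonal enumeration of tasks, together with the elementary verification that the limiting $(A,\alpha)$ satisfies (1)--(4), then completes the proof.
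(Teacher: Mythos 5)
Your proposal correctly identifies the right strategy --- a Mycielski-style fusion built from Baire category and the density of the $\{s_n : n\in\mathcal{A}_k\}$, with a diagonal enumeration of tasks --- and that is indeed what the paper does. However, the description of how you satisfy condition~(2) papers over the two mechanisms that actually make it work, and as written it would not go through.

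First, condition~(2) asks that an \emph{entire perfect subcylinder} $s^\frown[S_s]$ be mapped into a single $X_r$. You propose to ``specify enough further values of $\alpha$ to enter $U_r$, and then, using a standard dodging-a-meager-set step, avoid $M_r$ to actually land inside $X_r$; I record this $r$ as $r(s)$.'' But $M_r$ is merely meager, not nowhere dense, so a single finite extension cannot make an uncountable perfect set avoid it. In the paper, after picking $r(i)$ so that $X_{r(i)}$ is comeager in a suitable basic clopen set, one fixes a decreasing sequence of dense open sets $(\mathcal{O}_l)_{l\in\mathbb{N}}$ whose intersection lies inside the set where all the partition requirements hold; \emph{every} subsequent stage of the fusion is then forced to refine each active node into the next $\mathcal{O}_{l+1}$ (this is property (b) in the paper's step (II)), so that the final branches of $S$ land in $\bigcap_l\mathcal{O}_l$. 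Merely ``revisiting the task cofinally often'' does not automatically do this unless you build the avoidance into every stage; your description of what happens at a partition-task stage covers only the first visit.

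Second, and more structurally, the frozen coordinates of $\alpha$ on $\mathbb{N}\setminus A$ are \emph{shared} across all nodes of $S$ at a given level. So when you refine to enter $\mathcal{O}_{l+1}$, you must find a \emph{single} finite string $t_{l+1}$ (of values of $\alpha$ on the next gap) that pushes \emph{all} the finitely many active cylinders $u_i^\frown t_{l+1}^\frown[T_{u_i^\frown t_{l+1}}]$ into $\mathcal{O}_{l+1}$ at once. The paper achieves this by constructing nested extensions $v_0\sqsubseteq v_1\sqsubseteq\cdots$, one per active node, and then taking $t_{l+1}$ to extend the last one and also hit a chosen $s_n$. Your sketch treats each $s\in T\cap\mathbb{N}^{\bf m}$ independently and does not address how a single $\alpha$-pattern serves them all; without the nesting the separate refinements for different $s$ can conflict. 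Finally, the claim that disjointness of the $\mathcal{A}_k$ ``prevents collisions between different density requirements'' is not the role it plays: each stage handles exactly one density requirement, and density alone supplies a suitable $s_n$ extending whatever finite pattern has already been frozen, so there is nothing for disjointness to protect against; the paper does not use it in that way.
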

\begin{proof}
Let $(p_l)_{l\in \I N}$ be an enumeration of $T$ such that $\{l\in \mathcal{A}_k:s=p_l\}$ is infinite for every $k\in \I N$ and $s\in T$.
The construction proceeds by induction on $l\in \I N$.
Namely, in every step we construct $t_l\in \I N^{<\I N}$, $n_l\in \I N$, $\alpha_l\in T$ and $S_l\subseteq T$ such that $n_l=|\alpha_l|$,
$$\alpha_l={\bf p}^\frown {t_0} ^\frown (0)^\frown {t_1}^\frown (0)^\frown \dots ^\frown (0)^\frown t_l$$
and
$$S_l=\{s\in T\cap \I N^{n_l+1}: \forall {\bf m}\le j< n_l \ (\forall l'\le l \ j\not=n_{l'} \to s(j)=\alpha_l(j)\}.$$
In the end we put $\alpha=\bigcup_{l\in \I N}\alpha_l$ and $A={\bf m}\cup \{n_l\}_{l\in \I N}$.

{\bf (I)} $l=0$.
Let $(u_i)_{i<N_0}$ be an enumeration of $T\cap \I N^{\bf m}$.
Define inductively $v_i\in \I N^{<\I N}$ such that
\begin{itemize}
    \item ${u_i}^\frown v_i\in T$ for every $i< N_0$,
	\item $v_i\sqsubseteq v_{i+1}$ for every $i<N_0-1$,
	\item for every $i< N_0$ there is $r(i)\in \I N$ such that $X_{r(i)}$ is comeager in ${u_i}^\frown {v_i}^\frown [T_{{u_i}^\frown v_i}]$.
\end{itemize}
This can be achieved by \cite[Proposition~8.26]{kechrisclassical}.
Write $v=v_{N_0-1}$ and use the density of $\{s_n:n\in \mathcal{A}_{\bf k}\}$ to find $n\in \I N$ such that ${\bf p}^\frown v\sqsubseteq s_n$.
Let $t_0\in \I N^{<\I N}$ be such that $\alpha_0={\bf p}^\frown t_0=s_n$ and $n_0=|{\bf p}^\frown t_0|$.

Define
$$X=\bigcup_{i<N_0} {u_i}^\frown {t_0}^\frown [T_{{u_i}^\frown t_0}]\cap X_{r(i)}.$$
Note that $X$ is comeager in ${u_i}^\frown {t_0}^\frown [T_{{u_i}^\frown t_0}]$ for every $i<N_0$.
Fix $\{\mathcal{O}_l\}_{l\in \I N}$ a decreasing collection of open subsets of $[T]$ such that $\mathcal{O}_0=[T]$, $\bigcap_{l\in \I N} \mathcal{O}_l\subseteq X$ and $\mathcal{O}_l$ is dense in ${u_i}^\frown {t_0}^\frown [T_{{u_i}^\frown t_0}]$ for every $i< N_0$.

{\bf (II)} $l\mapsto l+1$.
Suppose that we have $(n_m)_{m\le l}$, $(\alpha_m)_{m\le l}$, $(S_m)_{m\le l}$ and $(t_m)_{m\le l}$ that satisfies
\begin{itemize}
	\item [(a)] $|\alpha_m|=n_m$ and $\alpha_m={\bf p}^\frown {t_0}^\frown (0)^\frown  \dots ^\frown (0)^\frown{t_m}$ for every $m\le l$,
	\item [(b)] $u^\frown [T_u]\subseteq \mathcal{O}_l$ for every $u\in S_l$,
	\item [(c)] if $m<l$, $m\in \mathcal{A}_k$ and $p_m\sqsubseteq u$ for some $u\in S_m$, then there is $n\in \mathcal{A}_k$ such that $p_m\sqsubseteq s_n\in T\cap \I N^{n_{m+1}}$ and $s_n(j)=\alpha_m(j)$ for every $j\not \in {\bf m}\cup \{n_m\}_{m<l}$ such that $j<n_{m+1}$.
\end{itemize}
Note that if $l=0$, then (a)--(c) are satisfied.
Next we show how to find $t_{l+1}\in \I N^{<\I N}$, $\alpha_{l+1}$, $S_{l+1}$ and $n_{l+1}\in \I N$ such that (a)--(c) holds.

Let $(u_i)_{i<N_l}$ be an enumeration of $S_l$.
Construct inductively $(v_i)_{i<N_l}$ such that
\begin{itemize}
	\item $v_i\sqsubseteq v_{i+1}$ for every $i<N_l-1$,
	\item ${u_i}^\frown {v_i}^\frown [T_{{u_i}^\frown {v_i}}]\subseteq \mathcal{O}_{l+1}$ for every $i<N_l$.
\end{itemize}
This can be done because for every $i<N_l$ there is $u\in T\cap \I N^{\bf m}$ such that $u^\frown {t_0}\sqsubseteq u_i$ and $\mathcal{O}_l$ is dense in $u^\frown {t_0}^\frown [T_{u^\frown {t_0}}]$.
Put $v=v_{N_l-1}$.
If $p_l$ satisfies the assumption of (c), that is $p_l\sqsubseteq u\in S_l$ for some $u\in S_l$, and $l\in \mathcal{A}_k$, then pick $i<N_l$ such that $p_l\sqsubseteq u_i$.
Otherwise pick any $i<N_l$.
It follows from the density of $\{s_n:n\in \mathcal{A}_k\}$ that there is $n\in \I N$ such that ${u_i}^\frown v\sqsubseteq s_n$.
Define $t_{l+1}\in \I N^{<\I N}$ such that ${u_i}^\frown t_{l+1}=s_n$, $\alpha_{l+1}={\alpha_l}^\frown (0)^\frown t_{l+1}$, $n_{l+1}=|{u_i}^\frown t_{l+1}|$ and $S_{l+1}=\{u^\frown t_{l+1}^\frown (j):u\in S_l, \ j<\ell^T_{n_{l+1}}\}$.

It is easy to see that (a) and (c) hold.
Property (b) follows from ${u_i}^\frown v_i\sqsubseteq {u_i}^\frown t_{l+1}$ for every $i<N_l$.

{\bf (III)}.
Let $A={\bf m}\cup \{n_l\}_{l\in \I N}$ and $\alpha=\bigcup_{l\in \I N} \alpha_l$.
Set $S=S_{(A,\alpha)}$.
We show that properties (1)--(4) are satisfied.

(1)
Is trivial.

(2)
Let $s\in S\cap \I N^{\bf m}$.
Note that $e_{(A,\alpha)}(s)=s^\frown t_0$ by the definition of $e_{(A,\alpha)}$.
Consequently,
$$\widetilde{e}_{(A,\alpha)}(s^\frown [S_s])\subseteq s^\frown {t_0}^\frown [T_{s^\frown t_0}].$$
By {\bf (I)}, there is $r\in \I N$ such that
$$X\cap s^\frown {t_0}^\frown [T_{s^\frown t_0}]\subseteq X_r$$
and $X$ is comeager in $s^\frown {t_0}^\frown [T_{s^\frown t_0}]$.
Let $c\in [T_s]$ and $l\in \I N$.
Define 
$$u=s^\frown {t_0}^\frown (c(0))^\frown {t_1}^\frown \dots ^\frown (c(l-1))^\frown {t_l}^{\frown}c(l).$$
Then it is easy to see that $u\in S_l$ and, using (b) from the inductive construction, we get
$$\widetilde{e}_{(A,\alpha)}(s^\frown c)\in u^\frown [T_u]\subseteq \mathcal{O}_l.$$
Therefore
$$\widetilde{e}_{(A,\alpha)}(s^\frown c)\in s^\frown {t_0}^\frown [T_{s^\frown {t_0}}]\cap \bigcap_{l\in \I N} \mathcal{O}_l\subseteq X_{r}$$
and that shows (2).

(3)
Let $k\in \I N$ and $s^\frown u\in S$ where $|s|={\bf m}$.
By the properties of the enumeration, there are infinitely many $l\in \mathcal{A}_k$ such that $p_l=e_{(A,\alpha)}(s^\frown u)$.
Pick one such that $|p_l|\le n_l$.
Then during the construction in {\bf (II)} we take $i<N_l$ such that $p_l\sqsubseteq u_i$ and $n\in \I N$ such that $u_i\sqsubseteq s_n$ and $n\in \mathcal{A}_k$ to define $t_{l+1}$ such that ${u_i}^\frown t_{l+1}=s_n$.
Let
$$v=s^\frown (s_n(n_0))^\frown (s_n(n_1))^\frown \dots ^\frown (s_n(n_{l}))\in S.$$
Then it follows from the definition that $e_{(A,\alpha)}(v)=s_n$ and the fact that $e_{(A,\alpha)}(s^\frown u)=p_l\sqsubseteq u_i\sqsubseteq s_n=e_{(A,\alpha)}(v)$ gives $s^\frown u\sqsubseteq v$.

(4)
Note that $e_{(A,\alpha)}({\bf p})={\bf p}^\frown t_0=s_n$ where $n\in \I N$ is such that $n\in \mathcal{A}_{\bf k}$ by the definition in {\bf (I)}.
This finishes the proof.
\end{proof}

\begin{proof}[Proof of \cref{lm:refinement}]
Let $(g_a)_{a\in \I N}$ be a dense subset of $G$.
The construction proceeds by induction on $r\in \I N$.
Let $(p_r)_{r\in \I N}$ be an enumeration of $\I N^{<\I N}$ such that $|\{r\in \I N:p_r=s\}|=\aleph_0$ for every $s\in \I N^{<\I N}$.
We construct a sequence of finitely uniformly branching trees $(T_r)_{r\in \I N}$ together with $(A_r,\alpha_r)\in [\I N]^\I N\times [T_r]$ such that $S_{(A_r,\alpha_r)}=T_{r+1}$ for every $r\in \I N$, $(\mathcal{A}^r)_{r\in \I N}\subseteq [\I N]^\I N$, $(s^r_n)_{n\in \mathcal{A}^r}\subseteq T_r$ for every $r\in \I N$ and $(\varphi_r:[T_r]\to X)_{r\in \I N}$ such that the following holds
\begin{enumerate}
	\item $A_r \cap (r+1)=r+1$ for every $r\in \I N$,
	\item $\varphi_r=\varphi\circ \widetilde{e}_{(A_0,\alpha_0)}\circ \dots \widetilde{e}_{(A_{r-1},\alpha_{r-1})}$ is a homomorphism from $\mathbb{E}^{T_r}_0$ to $E^X_G$ for every $r\in \I N$,
	\item $\{0,\dots,r\}\in \mathcal{A}^r$ for every $r\in \I N$,
	\item $(s^r_n)_{n\in\mathcal{A}^r}$ is a dense subset of $T_r$ such that $|s^r_n|=n$ and $\varphi_r$ is a homomorphism from $\I G^{T_r}_{s^r_n}$ to $\mathcal{H}_{k,n}$ for every $r,n\in \I N$,
	\item if $p_r\in T_r\cap \I N^{< r}$, then $p_r\sqsubseteq s^{r+1}_{r+1}$ (note that $p_r\in T_{r+1}$ by (1)),
	\item for every $s\in T_{r}\cap \I N^r$ there is $g^{s,r}\in G$ such that for every $c\in s^\frown [(T_{r})_s]$ there is $g^{s,r}_c\in G$ such that we have
	\begin{equation*}
	    \begin{split}
	        |d(g^{s,r},1_G)-{\bf d}_{\varphi_{r}}({s^{r}_r}^\frown c,s^\frown c)|< & \ \frac{1}{2^{r+2}}, \\
	       	g^{s,r}_c\cdot \varphi_{r}({s^{r}_r}^\frown c)= & \ \varphi_r(s^\frown c), \\
	       	d(g^{s,r},g^{s,r}_c)< & \ \frac{1}{2^{r+2}}
	    \end{split}
	\end{equation*}
	for every $r\in \I N$, where ${\bf d}_{\varphi_r}$ is defined as in \cref{pr:Borel pseudo}.
\end{enumerate}

{$\bf r=0$}
We put $T_0=T'$, $\mathcal{A}^0=\I N$, $s^0_m=s'_m$ for every $m\in \I N$ and $\varphi_0=\varphi'$.
Conditions (1) and (5) are empty, (2)--(4) are satisfied by assumption and for (6) it is enough to take $g^{\emptyset, 0}=g^{\emptyset,0}_c=1_G$ for every $c\in [T_{\emptyset}]$.

{$\bf r\mapsto r+1$}
In the inductive step we construct $(A_r,\alpha_r)$, $\mathcal{A}^{r+1}$, $(s^{r+1}_{n})_{n\in\mathbb{N}}$ and $\varphi_{r+1}$ such that (1)--(6) holds.
We use a version of \cref{lm:main technical}, where instead of a sequence $(\mathcal{A}_l)_{l\in\I N}$ we take single $\mathcal{A}$, e.g., to apply \cref{lm:main technical} we may consider any partition of $\mathcal{A}$ to disjoint sets and pick ${\bf k}\in \I N$ arbitrarily.

Set $T=T_r$, $\mathcal{A}=\mathcal{A}^r$, ${\bf m}=r+1$, $(s^r_n)_{n\in\mathcal{A}}$, $p_r\sqsubseteq {\bf p}\in T_r\cap \I N^{\bf m}$ if $p_r\in T_r\cap \I N^{< {\bf m}}$ otherwise we put ${\bf p}=(0,\dots,0)\in \I N^{\bf m}$ and $(X_q)_{q\in \I N^{N_r}}$, where $N_r=\{s\in T_r:|s|=r+1\}$ and 
\begin{itemize}
	\item if $s\in N_r$ and $s\not={\bf p}$, then $s^\frown x\in X_q$ for every $q\in \I N^{N_r}$ and $x\in [(T_r)_s]$,
	\item if $x\in [(T_r)_{\bf p}]$, then ${\bf p}^\frown x\in X_q$ if and only if 
	$$\forall s\in N_r \ \left(\exists g^s_x \in G \ d(g^s_x,g_{q(s)})<\frac{1}{2^{r+2}} \ \wedge \ g^s_x\cdot \varphi_r({\bf p}^\frown x)=\varphi_r(s^\frown x)\right) \ \wedge$$
$$\wedge \ |d(g_{q(s)},1_G)-{\bf d}_{\varphi_r}(s^\frown x,{\bf p}^\frown x)|<\frac{1}{2^{r+2}}.$$
\end{itemize}
It is easy to see that the first line in the second item defines an analytic set and it follows from \cref{pr:Borel pseudo} that the second line defines Borel set.
Altogether, $X_q$ is an analytic subset of $[T_r]$, i.e., it has the Baire property by \cite[Theorem~21.6]{kechrisclassical}, for every $q\in \I N^{N_r}$ and $[T_r]=\bigcup_{q\in \mathbb{N}^{N_r}} X_q$.

\cref{lm:main technical} produces $(A_r,\alpha_r)\in [\I N]^\I N\times [T_r]$.
Define $T_{r+1}=S_{(A_r,\alpha_r)}$, $\varphi_{r+1}=\varphi_r\circ \widetilde{e}_{(A_r,\alpha_r)}$,
$$\mathcal{A}^{r+1}=\{|v|\in T_{r+1}:\exists n\in \mathcal{A}^r \ s^r_n=e_{(A_r,\alpha_r)}(v)\}$$
and $(s^{r+1}_{n})_{n\in \mathcal{A}^{r+1}}$ be an enumeration of $e_{(A_r,\alpha_r)}^{-1}((s^r_n)_{n\in \mathcal{A}^r})$ that satisfies $|s^{r+1}_n|=n$ for every $n\in \I N$.

It is easy to see that (1) and (2) hold.
Note that ${\bf p}=s^{r+1}_{r+1}\in T_{r+1}$ because by \cref{lm:main technical}~(4) we have ${\bf p}\sqsubseteq e_{(A_r,\alpha_r)}({\bf p})=s^r_n$ for some $n\in\mathcal{A}^r$.
This shows (3) and (5) follows from $p_r\sqsubseteq {\bf p}$.
First part of item (4) follows from \cref{lm:main technical}~(3).
Second part follows from the inductive hypothesis and definition of $(s^{r+1}_n)_{n\in \mathcal{A}^{r+1}}$.
Namely, for every $n\in \mathcal{A}^{r+1}$ there is $n'\in \mathcal{A}^r$ such that $e_{(A_r,\alpha_r)}(s^{r+1}_n)=s^r_{n'}$.
Note that $n\le n'$.
Then we have that $\varphi_r$ is a homomorphism from $\I G^{T_r}_{s^r_{n'}}$ to $\mathcal{H}_{{ k},n'}$ and $\widetilde{e}_{(A_r,\alpha_r)}$ is a reduction from $\I G^{T_{r+1}}_{s^{r+1}_n}$ to $\I G^{T_r}_{s^{r}_{n'}}$.
This shows that $\varphi_{r+1}$ is a homomorphism from $\I G^{T_{r+1}}_{s^{r+1}_n}$ to $\mathcal{H}_{{k},n'}\subseteq \mathcal{H}_{{k},n}$ because $n\le n'$.

It remains to show (6).
Recall that ${\bf p}=s^{r+1}_{r+1}$.
It follows from \cref{lm:main technical}~(2) that there is $q\in \mathbb{N}^{N_r}$ such that ${\bf p}^\frown [(T_{r+1})_{\bf p}]\subseteq \widetilde{e}^{-1}_{(A_r,\alpha_r)}(X_q)$.
Let $s\in T_{r+1}$ and define $g^{s,r+1}=g_{q(s)}\in G$.
Take any $c\in [(T_{r+1})_{s}]$.
By the definition of $\widetilde{e}_{(A_r,\alpha_r)}$ we find $d\in [(T_{r})_s]=[(T_{r})_{\bf p}]$ such that
$$\widetilde{e}_{(A_r,\alpha_r)}(s^\frown c)=s^\frown d \ \& \ \widetilde{e}_{(A_r,\alpha_r)}({\bf p}^\frown c)={\bf p}^\frown d.$$
Since ${\bf p}^\frown d\in X_q$ we find $g^s_d\in G$ such that, if we set $g^{s,r+1}_c=g^s_d$, we have
\begin{equation*}
    \begin{split}
        d(g^{s,r+1}_c,g^{s,r+1})=  d(g^{s}_d,g_{q(s)})< & \ \frac{1}{2^{r+2}} \\
        |d(g^{s,r+1},1_G)-{\bf d}_{\varphi_{r+1}}(s^\frown c,{\bf p}^\frown c)|= & \  |d(g_{q(s)},1_G)-{\bf d}_{\varphi_r}(s^\frown d,{\bf p}^\frown d)|< \frac{1}{2^{r+2}} \\
        g^{s,r+1}_c\cdot \varphi_{r+1}({\bf p}^\frown c)= & \  g^{s}_d\cdot \varphi_{r}({\bf p}^\frown d)=\varphi_{r}\circ\widetilde{e}_{(A_r,\alpha_r)}(s^\frown c)=  \varphi_{r+1}(s^\frown c)
    \end{split}
\end{equation*}
by the definition of $X_q$.
That shows (6) an the proof is finished.

{\bf Constructing ${\bf \phi}$}.
\cref{lm:colimit} gives a finitely uniformly branching tree $T$ and a sequence of continuous maps $\left(\widetilde{\psi}_{r,\infty}:[T]\to [T_r]\right)_{r\in \I N}$.
Define $\phi=\varphi_r\circ \widetilde{\psi}_{r,\infty}$ for some, or equivalently (by \cref{lm:main technical}~(3)) any, $r\in \I N$.
Note that $\phi$ is a continuous map and $\phi=\varphi\circ \zeta$ where $\zeta=\widetilde{\psi}_{0,\infty}$.

Define $(s_r)_{r\in \I N}=(s^r_r)_{r\in \I N}$.
It follows from (1) and \cref{lm:colimit}~(1) that $s^r_r=s_r\in T$ for every $r\in \I N$ and $|s_r|=r$.
By (4) and \cref{lm:colimit}~(4) we have that $\varphi$ is a homomorphism from $\I G^T_{s_r}$ to $\mathcal{H}_{{k},r}$ for every $r\in \I N$.
Let $s\in T$.
Then there is $r\ge |s|$ such that $p_r=s$.
It follows by (5) that $s=p_r\sqsubseteq s_{r+1}=s^{r+1}_{r+1}$ and, consequently, $(s_r)_{r\in \I N}$ is dense in $T$.

It remains to show that ${\bf d}_\varphi$ is uniform.
Let $s,t\in T\cap \I N^r$ and $x,y\in [T_s]$.
It follows from \cref{lm:colimit}~(2) that there are $c,d\in [(T_r)_s]$ such that
$$\widetilde{\psi}_{r,\infty}(u^\frown x)=u^\frown c \ \& \ \widetilde{\psi}_{r,\infty}(u^\frown y)=u^\frown d$$
for every $u\in T\cap \I N^r$.

Let $g^{s,r}, g^{s,r}_c, g^{s,r}_d \in G$ be as in (6).
We have
\begin{equation*}
    \begin{split}
        |{\bf d}_{\varphi}({s_r}^\frown x,s^\frown x)-{\bf d}_{\varphi}({s_r}^\frown y,s^\frown y)|= & \ |{\bf d}_{\varphi_{r}}({s^{r}_r}^\frown c,s^\frown c)-{\bf d}_{\varphi_{r}}({s^{r}_r}^\frown d,s^\frown d)| \\
        \le & \  |{\bf d}_{\varphi_{r}}({s^{r}_r}^\frown c,s^\frown c)-d(g^{s,r},1_G)|+|d(g^{s,r},1_G)-{\bf d}_{\varphi_{r}}({s^{r}_r}^\frown d,s^\frown d)|\\
        \le & \  \frac{1}{2^{r+1}}.
    \end{split}
\end{equation*}
Consequently, after doing the same argument for $t$, we obtain
$$|{\bf d}_{\varphi}(t^\frown x,s^\frown x)-{\bf d}_{\varphi}(t^\frown y,s^\frown y)|\le \frac{1}{2^r}.$$

Pick any $g,h\in G$ such that $g\cdot \varphi(s^\frown x)=\varphi(s^\frown y)$ and $h\cdot \varphi({s_r}^\frown x)=\varphi({s_r}^\frown y)$ if they exist.
Then we have
\begin{equation}\tag{**}\label{eq:blabla}
    \begin{split}
        (g^{s,r}_d)^{-1}\cdot g\cdot g^{s,r}_c\cdot \varphi({s_r}^\frown x)= & \ (g^{s,r}_d)^{-1}\cdot g\cdot g^{s,r}_c\cdot \varphi_{r}({s_r}^\frown c)=\varphi_r({s_r}^\frown d)=\varphi({s_r}^\frown y) \\
        g^{s,r}_d\cdot h\cdot (g^{s,r}_c)^{-1}\cdot \varphi(s^\frown x)= & \ g^{s,r}_d\cdot h\cdot (g^{s,r}_c)^{-1}\cdot \varphi_{r}(s^\frown c)=\varphi_r(s^\frown d)=\varphi(s^\frown y)
    \end{split}
\end{equation}
by (6).
The invariance of $d$ gives
$$d((g^{s,r}_d)^{-1}\cdot g\cdot g^{s,r}_c,1_G)=d(g,g^{s,r}_d\cdot (g^{s,r}_c)^{-1})\le d(g,1_G)+d(g^{s,r}_d,g^{s,r}_c)\le d(g,1_G)+\frac{1}{2^{r+1}}$$
where the last inequality follows from
$$d(g^{s,r}_d,g^{s,r}_c)\le d(g^{s,r}_d,g^{s,r})+d(g^{s,r},g^{s,r}_c).$$
Similarly
$$d(g^{s,r}_d\cdot h\cdot (g^{s,r}_c)^{-1},1_G)\le d(h,1_G)+\frac{1}{2^{r+1}}.$$
This implies
$$|{\bf d}_{\varphi}(s^\frown x,s^\frown y)-{\bf d}_\varphi({s_r}^\frown x,{s_r}^\frown y)|\le \frac{1}{2^{r+1}}.$$
Similar argument for $t$ implies that
$$|{\bf d}_{\varphi}(s^\frown x,s^\frown y)-{\bf d}_\varphi(t^\frown x,t^\frown y)|\le \frac{1}{2^{r}}.$$
In the case when such $g,h\in G$ do not exist, then, by \eqref{eq:blabla}, we have 
$${\bf d}_{\varphi}(s^\frown x,s^\frown y)={\bf d}_\varphi(t^\frown x,t^\frown y)=+\infty$$
and trivially
$$|{\bf d}_{\varphi}(s^\frown x,s^\frown y)-{\bf d}_\varphi(t^\frown x,t^\frown y)|\le \frac{1}{2^{r}}.$$
This finishes the proof.
\end{proof}

\begin{proof}[Proof of \cref{lm:first ref}]
Recall that $k_n\in \I N$ is such that $k_0=0$, $k_{n+1}\le \max\{k_m:m\le n\}+1$ for every $n\in\mathbb{N}$ and for every $k\in\I N$ there are infinitely many $n\in\I N$ such that $k_n=k$.
Also, we defined $V_k=\Delta_{k}$ for every $k\in \I N$ and the definitions of $(R_{i,j})_{i,j\in \I N}$ are made with respect to $(V_k)_{k\in \I N}$.
Set $W_k=V_{f(k)+2}$ for every $k\in \I N$.
Then it is easy to see that we have $W_{k}\cdot W_{k}\subseteq V_{f(k)+1}\subseteq V_k$ for every $k\in \I N$.
Define $\widetilde{R}_{k,f(k)}$ as
$$(x,y)\in \widetilde{R}_{k,f(k)} \ \Leftrightarrow \ y\in (V_k \cdot V_k)\cdot x \ \wedge \ y\not \not\in V_{f(k)+1}\cdot x$$
for every $x,y\in X$.
Note that we have $R_{k,f(k)}\subseteq \widetilde{R}_{k,f(k)}\subseteq R_{k-1,f(k)+1}$ for every $k>0$.

The proof consists of two steps.
In the first step {\bf (A)} we find a continuous homomorphism $\varphi:2^\I N\to X$ from $(\I G_s)_{s\in 2^{<\I N}}$ to $(\widetilde{R}_{k_{|s|},f(k_{|s|})})_{s\in 2^{<\I N}}$.
In the second step {\bf (B)} we find a subsequence of $(V_{k})_{k\in \I N}$ and $(A,\alpha)\in [\I N]^\I N\times 2^\I N$ such that $\varphi\circ \widetilde{e}_{(A,\alpha)}$ is a homomorphism from $(\I G_s)_{s\in 2^{<\I N}}$ to $(R_{k_{|s|},k_{|s|}+1})_{s\in 2^{<\I N}}$.

{\bf (A)}.
The construction proceeds by induction on $r\in \I N$.
We construct $((A_r,\alpha_r))_{r\in \I N}\subseteq [\I N]^{\I N}\times2^\I N$ together with $(\mathcal{A}^r_k)_{r,k\in \I N}\subseteq [\I N]^{\I N}$, $(\varphi_r:2^\I N\to X)_{r\in \I N}$ and $(s^r_n)_{r,n\in \I N}\subseteq 2^{<\I N}$ such that the following holds
\begin{enumerate}
	\item $A_r \cap (r+1)=r+1$ for every $r\in \I N$,
	\item $(\mathcal{A}^r_k)_{k\in \I N}$ is a partition of $\I N$ for every $r\in \I N$,
	\item $\varphi_r=\phi_0\circ \widetilde{e}_{(A_0,\alpha_0)}\circ \dots \widetilde{e}_{(A_{r-1},\alpha_{r-1})}$ for every $r\in \I N$,
	\item $\{s^r_n:n\in \mathcal{A}^r_k\}$ is dense in $2^{<\I N}$ for every $r,k\in \I N$,
	\item $\varphi_r$ is a homomorphism from $\I G_{s^r_{n}}$ to $R_{k,f(k)}$ whenever $n\in \mathcal{A}^r_k$ for every $r,k\in \I N$,
	\item $\varphi_{r}$ is a homomorphism from $\I G_s$ to $\widetilde{R}_{k_{|s|},f(k_{|s|})}$ for every $s\in 2^r$ and $r\in \I N$.
\end{enumerate}
Having this we use \cref{lm:colimit} and define $\varphi:2^\I N\to X$ as $\varphi=\varphi_r\circ \widetilde{\psi}_{r,\infty}$ for some, or equivalently any, by \cref{lm:colimit}~(2), $r\in \I N$.
Let $s\in 2^{<\I N}$.
By \cref{lm:colimit}~(3) we have that $\widetilde{\psi}_{|s|,\infty}$ is a reduction from $\I G_s$ to $\I G_s$.
Property (6) then implies that $\varphi=\varphi_{|s|}\circ \widetilde{\psi}_{|s|,\infty}$ is a homomorphism from $\I G_s$ to $\widetilde{R}_{k_{|s|},f(k_{|s|})}$.

Let $r\in \I N$ and suppose that we have $(\mathcal{A}^r_k)_{k\in \I N}$, $\varphi_r$ and $(s^r_n)_{n\in \I N}$ that satisfy (2)--(6).
We show how to construct $(A_r,\alpha_r)$, $(\mathcal{A}^{r+1}_k)_{k\in \mathbb{N}}$, $\varphi_{r+1}$ and $(s^{r+1}_n)_{n\in \I N}$ that satisfy (1)--(6).
In the case $r=0$ we put $\mathcal{A}^0_k=\{n\in \mathbb{N}:k_n=k\}$ and $s^0_n=s_n$ for every $n\in \I N$.
Then properties (2)--(5) follow directly from definitions, while (6) is easy to see once we realize that $\varphi_0=\phi_0$ is a homomorphism from $\I G_{\emptyset}$ to $R_{k_{0},f(k_0)}$ and $R_{k_{0},f(k_0)}\subseteq \widetilde{R}_{k_0,f(k_0)}$.

Let $(g_m)_{m\in \I N}$ be a dense subset of $G$.
To build $(A_{r},\alpha_r)$ we use \cref{lm:main technical} with $(\mathcal{A}^r_k)_{k\in \I N}$, $(s^r_n)_{n\in \I N}$, ${\bf k}=k_{r+1}$, ${\bf m}=r+1$, ${\bf p}=(0,\dots, 0)\in 2^{\bf m}$ and $(X_q)_{q\in \I N^{2^{\bf m}}}$, where
$$X_q=\{{\bf p}^\frown c\in 2^{\I N}:\forall u\in 2^{\bf m} \ \varphi_{r}(u^\frown c) \in W_{k_{r+1}}\cdot g_{q(u)}\cdot \varphi_r({\bf p}^\frown c)\}\cup \bigcup_{{\bf p}\not=u\in 2^{\bf m}} u^\frown 2^\I N.$$
It follows from the density of $(g_m)_{m\in \I N}$ that $\bigcup_{q\in \I N^{2^{\bf m}}} X_q=2^\I N$.
Moreover, by the definition, we have that $X_q$ is an analytic subset of $2^\I N$ for every $q\in \mathbb{N}^{2^{\bf m}}$, thus it has the Baire property by~\cite[Theorem~21.6]{kechrisclassical}.
Now, \cref{lm:main technical} gives $(A_{r},\alpha_r)$ that satisfies \cref{lm:main technical}~(1)--(4).
Next we verify properties (1)--(6).

It is easy to see that Properties (1) and (3) hold when we put $\varphi_{r+1}=\varphi_r\circ \widetilde{e}_{(A_{r},\alpha_r)}$.
Let $(s^{r+1}_n)_{n\in \I N}$ be an enumeration of the set $$\{v\in 2^{<\I N}:\exists m\in \I N \ e_{(A_r,\alpha_r)}(v)=s^r_m\}$$
and $\mathcal{A}^{r+1}_k=\{n\in \I N:\exists m\in \mathcal{A}^r_k \ e_{(A_r,\alpha_r)}(s^{r+1}_n)=s^r_m\}$ for every $k\in \I N$.
Then it follows from \cref{lm:main technical}~(3) that Properties (2) and (4) hold.
Let $s^{r+1}_n$ and $m\in \I N$ be such that $e_{(A_r,\alpha_r)}(s^{r+1}_n)=s^r_m$.
Then we have that $\widetilde{e}_{(A_r,\alpha_r)}$ is a reduction from $\I G_{s^{r+1}_n}$ to $\I G_{s^r_m}$.
By (5) of the inductive assumption we have that $\varphi_r$ is a homomorphism from $\I G_{s^r_m}$ to $R_{k,f(k)}$ where $m\in \mathcal{A}^r_k$.
Then we have that $\varphi_{r+1}=\varphi_r\circ \widetilde{e}_{(A_r,\alpha_r)}$ is a homomorphism from $\I G_{s^{r+1}_n}$ to $R_{k,f(k)}$.
By the definition we have $n\in \mathcal{A}^{r+1}_k$ and that shows (5).

It remains to show (6).
By \cref{lm:main technical}~(2) there is $q\in \I N^{2^{\bf m}}$ such that ${\bf p}^\frown 2^\I N\subseteq (\widetilde{e}_{(A_r,\alpha_r)})^{-1}(X_q)$.
By \cref{lm:main technical}~(4) and the definition of $e_{(A_r,\alpha_r)}$ we have that ${\bf p}\sqsubseteq e_{(A_r,\alpha_r)}({\bf p})=s^r_n$ where $n\in \mathcal{A}_{{\bf k}}=\mathcal{A}_{k_{r+1}}$.
We have that $\varphi_{r+1}$ is a homomorphism from $\I G_{\bf p}$ to $R_{k_{r+1},f(k_{r+1})}$ because $\widetilde{e}_{(A_r,\alpha_r)}$ is a reduction from $\I G_{\bf p}$ to $\I G_{s^r_n}$ and $\varphi_r$ is a homomorphism from $\I G_{s^r_n}$ to $R_{k_{r+1},f(k_{r+1})}$ by (5).
Let $c\in \I N$, then we have $\widetilde{e}_{(A_r,\alpha_r)}({\bf p}^\frown c)\in {\bf p}^\frown 2^\I N\cap X_q$ and it follows from the definition of $\widetilde{e}_{(A_r,\alpha_r)}$ that there is $d\in 2^\I N$ such that
$$\widetilde{e}_{(A_r,\alpha_r)}(u^\frown c)=e_{(A_r,\alpha_r)}(u\upharpoonright r)^\frown (u(r))^\frown d=u^\frown d$$
holds for every $u\in 2^{\bf m}$.
This implies that
$$\varphi_{r+1}(u^\frown c)=\varphi_r\circ \widetilde{e}_{(A_r,\alpha_r)}(u^\frown c)=\varphi_r(u^\frown d),$$
and, by the definition of $X_q$, we have
\begin{equation*}
    \begin{split}
        \varphi_{r+1}(u^\frown c)= \varphi_r(u^\frown d)\in & \ W_{k_{r+1}}\cdot g_{q(u)}\cdot \varphi_r({\bf p}^\frown d) \\
        \in & \ W_{k_{r+1}}\cdot g_{q(u)}\cdot (\varphi_r\circ \widetilde{e}_{(A_r,\alpha_r)})({\bf p}^\frown c) \\
        \in & \  W_{k_{r+1}}\cdot g_{q(u)}\cdot \varphi_{r+1}({\bf p}^\frown c)
    \end{split}
\end{equation*}
for every $u\in 2^{\bf m}$.

Recall that $2^{\bf m}=2^{r+1}$.
Let $u \in 2^{r+1}$ and $c\in 2^{\I N}$.
Pick $h_0\in V_{k_{r+1}}$, $g_0\in G$ and $a,b\in W_{k_{r+1}}$ such that 
\begin{itemize}
	\item $\varphi_{r+1}({\bf p}^\frown (1)^\frown c)=h_0\cdot \varphi_{r+1}({\bf p}^\frown (0)^\frown c)$,
	\item $\varphi_{r+1}(u^\frown (1)^\frown c)=g_0\cdot \varphi_{r+1}(u^\frown (0)^\frown c)$,
	\item $\varphi_{r+1}(u^\frown (0)^\frown c)=a\cdot g_{q(u)}\cdot \varphi_{r+1}({\bf p}^\frown (0)^\frown c)$,
	\item $\varphi_{r+1}(u^\frown (1)^\frown c)=b\cdot g_{q(u)}\cdot \varphi_{r+1}({\bf p}^\frown (1)^\frown c)$.
\end{itemize}
An easy calculation shows that
\begin{equation*}
    \begin{split}
        \varphi_{r+1}({\bf p}^\frown (1)^\frown c)= & \ \left(g^{-1}_{q(u)}\cdot b^{-1}\cdot g_0 \cdot a\cdot g_{q(u)} \right) \cdot \varphi_{r+1}({\bf p}^\frown (0)^\frown c)\\
        \varphi_{r+1}(u^\frown (1)^\frown c)= & \ \left(b\cdot g_{q_{u}} \cdot  h_0\cdot g^{-1}_{q(u)} \cdot a^{-1}\right)\cdot \varphi_{r+1}(u^\frown (0)^\frown c).
    \end{split}
\end{equation*}
Recall that $V_{k_{r+1}}$, $V_{f(k_{r+1})}$ and $W_{k_{r+1}}$ are conjugacy invariant and symmetric.
Then we have
$$\left(b\cdot g_{q_{u}} \cdot  h_0\cdot g^{-1}_{q(u)} \cdot a^{-1}\right)\in b\cdot V_{k_{r+1}}\cdot a^{-1}=b\cdot a^{-1} \cdot V_{k_{r+1}}\subseteq V_{k_{r+1}}\cdot V_{k_{r+1}}.$$
Assume that $g_0\in V_{f(k_{r+1})+1}$ then
\begin{equation*}
    \begin{split}
        \left(g^{-1}_{q(u)}\cdot b^{-1}\cdot g_0 \cdot a\cdot g_{q(u)} \right)\in & \ g^{-1}_{q(u)}\cdot W_{k_{r+1}}\cdot V_{f(k_{r+1})+1}\cdot W_{k_{r+1}}\cdot g_{q(u)}\\
        \in & \ V_{f(k_{r+1})+2}\cdot V_{f(k_{r+1})+1}\cdot V_{f(k_{r+1})+2}\\
        \in & \  V_{f(k_{r+1})}.
    \end{split}
\end{equation*}
The assumption that $(\varphi_{r+1}({\bf p}^\frown (0)^\frown c),\varphi_{r+1}({\bf p}^\frown (1)^\frown c))\in R_{k_{r+1},f(k_{r+1})}$ implies that
$$(\varphi_{r+1}(u^\frown (0)^\frown c),\varphi_{r+1}(u^\frown (1)^\frown c))\in \widetilde{R}_{k_{r+1},f(k_{r+1})},$$
hence we have (6).

{\bf (B)}.
In the first step {\bf (A)} we found a continuous homomorphism $\varphi:2^{\I N}\to X$ from $(\I G_s)_{s\in 2^{<\I N}}$ to $(\widetilde{R}_{k_{|s|},f(k_{|s|})})_{s\in 2^{<\I N}}$.
Put $k^0=0$ and define inductively $k^{i+1}=f(k^i+1)+1$ for every $i\in \I N$.
Let
$$A=\{n\in \I N:\exists i\in \I N \ k_n=k^i+1\}$$
and $\alpha=(0,0,,\dots)\in 2^{\I N}$.
Put $\phi_1=\varphi\circ\widetilde{e}_{(A,\alpha)}:2^\I N\to X$ and define $i_{|s|}\in \I N$ such that $k_{|e_{(A,\alpha)}(s)|}=k^{i_{|s|}}+1$.
This is well defined since $|e_{(A,\alpha)}(s)|\in A$ for every $s\in 2^{<\I N}$.

We show that $\phi_1$ is a homomorphism from $(\I G_s)_{s\in 2^{<\I N}}$ to $(R_{k^{i_{|s|}},k^{i_{|s|}+1}})_{s\in 2^{<\I N}}$.
We have that $\widetilde{e}_{(A,\alpha)}$ is a reduction from $\I G_s$ to $\I G_{e_{(A,\alpha)}(s)}$ for every $s\in 2^{<\I N}$.
Fix $s\in 2^{<\I N}$ and let $n=|e_{(A,\alpha)}(s)|$.
Then $\phi_1$ is a homomorphism from $\I G_s$ to $\widetilde{R}_{k_{n},f(k_{n})}$.
We have 
$$\widetilde{R}_{k_n,f(k_n)} \subseteq R_{k^{i_{|s|}},k^{i_{|s|}+1}}$$
because $V_{k_n}\cdot V_{k_n}\subseteq V_{k_n-1}=V_{k^i}$ and $V_{f(k_n)+1}=V_{f(k^i+1)+1}=V_{k^{i+1}}$.
Therefore, after passing to the subsequence $(V_{k^i})_{i\in \I N}$ we have that $\phi_1$ is a homomorphism from $(\I G_s)_{s\in 2^{<\I N}}$ to $(R_{k^{i_{|s|}},k^{i_{|s|}+1}})_{s\in 2^{<\I N}}$ and that finishes the proof.
\end{proof}

\begin{proof}[Proof of \cref{lm:second ref}]
Recall that $n_0(k)$ is the minimal number such that $k_n=k$ and $(V_i)_{i\in \I N}$ is a decreasing sequence of open neighborhoods of $1_G$.
The relations $(R_{i,j})_{i,j\in \I N}$ are defined with respect to $(V_k)_{k\in \I N}$.
Recall also that $\Delta_k=\{g\in G:d(g,1_G)<\frac{1}{2^k}\}$, where $d$ is some fixed compatible metric on $G$, and the relations $(\fH_{k,m})_{k,m\in\I N}$ are defined with respect to $(\Delta_k)_{k\in \I N}$.
We assume that $V_k\subseteq \Delta_k$ for every $k\in \I N$.
By the assumption, there is a sequence of Borel sets $\{A_{k,l}\}_{k,l\in \I N}$ such that $\{A_{k,l}\}_{l\in\I N}$ is a partition of $X$ for every fixed $k\in \I N$ and $A_{k,l}$ is $\mathcal{H}_{k,m(k,l)}$-independent for every $k,l\in \I N$ and some $m(k,l)\in \I N$.

The proof consists of two steps.
In the first step, {\bf (A)}, we find a continuous homomorphism $\varphi:2^{\I N} \to X$ from $(\mathbb{G}_s)_{s\in 2^{<\I N}}$ to $(R_{k_{|s|},k_{|s|}+1})_{s\in 2^{<\I N}}$ and a sequence $(m(r))_{r\in \I N}\subseteq \I N$ such that
$$\varphi(s^\frown 2^\I N)$$
is $\mathcal{H}_{|s|,m(|s|)}$-independent for every $\emptyset\not= s\in 2^{<\I N}$.
In the second step, {\bf (B)}, we pass to a subsequence of $(V_k)_{k\in \I N}$ and massage $\varphi$ so that it meets the desired requirements.

{\bf (A)}.
The construction proceeds by induction on $r\in \I N$.
We construct $((A_{r},\alpha_r))_{r\in \I N}\subseteq [\I N]^\I N\times 2^{\I N}$, $(\mathcal{A}^r_k)_{r,k\in \I N}\subseteq [\I N]^\I N$, $(\varphi_r:2^\I N\to X)_{r\in \I N}$, $(m(r))_{r\in \I N}\subseteq \I N$ and $(s^r_n)_{r\in \I N}\subseteq 2^{<\I N}$ such that the following holds
\begin{enumerate}
	\item $A_r\cap (r+1)=r+1$ for every $r\in\I N$,
	\item $(\mathcal{A}^r_k)_{k\in\I N}$ is a partition of $\I N$ for every $k\in\I N$,
	\item $\varphi_r=\phi_1\circ \widetilde{e}_{(A_0,\alpha_0)}\circ \dots \widetilde{e}_{(A_{r-1},\alpha_{r-1})}$ for every $r\in \I N$,
	\item $\{s^r_n:n\in \mathcal{A}^r_k\}$ is dense in $2^{<\I N}$ for every $r,k\in \I N$,
	\item $\varphi_r$ is a homomorphism from $\I G_{s}$ to $R_{k,k+1}$ whenever $|s|\in \mathcal{A}^r_k$ for every $r,k\in \mathbb{N}$,
	\item $\varphi_r$ is a homomorphism from $(\I G_s)_{s\in 2^{r}}$ to $(R_{k_{|s|},k_{|s|}+1})_{s\in 2^{r}}$,
	\item $\varphi_{r+1}(s^\frown 2^\I N)$ is $\mathcal{H}_{r+1,m(r+1)}$-independent for every $s\in 2^{r+1}$ and for every $r\in \I N$.
\end{enumerate}
Having this, we use \cref{lm:colimit} and define $\varphi:2^\I N\to X$ as $\varphi=\varphi_r\circ \widetilde{\psi}_{r,\infty}$ for some, or equivalently, by \cref{lm:colimit}~(2) any, $r\in \I N$.
By \cref{lm:colimit}~(3), we have that $\widetilde{\psi}_{r,\infty}$ is a reduction from $\I G_s$ to $\I G_s$ for every $s\in 2^r$ and every $r\in \I N$.
This implies that $\varphi$ is a homomorphism from $(\I G_s)_{s\in 2^{<\I N}}$ to $(R_{k_{|s|},k_{|s|}+1})_{s\in 2^{<\I N}}$.
Let $s\in 2^{r+1}$, then it follows from \cref{lm:colimit}~(1)and~(2) that
$$\widetilde{\psi}_{r+1,\infty}(s^\frown 2^\I N)\subseteq s^\frown 2^\I N.$$
This implies that $\varphi(s^\frown 2^{\I N})$ is $\mathcal{H}_{|s|,m(|s|)}$-independent for every $\emptyset\not =s\in 2^{<\I N}$.

Let $r\in \I N$ and suppose that we have $(\mathcal{A}^r_{k})_{k\in \I N}$, $\varphi_r$ and $(s^r_n)_{n\in \I N}$ that satisfy (2)--(5).
We show how to construct $(A_r,\alpha_r)$, $(\mathcal{A}^{r+1}_{k})_{k\in \I N}$, $\varphi_{r+1}$, $m(r+1)\in \I N$ and $(s^{r+1}_n)_{n\in \I N}$ that satisfy (1)--(7).
In the case when $r=0$ we put $\mathcal{A}^0_k=\{n\in \I N:k_n=k\}$ for every $k\in \I N$, $m(0)=0$, $\varphi_0=\phi_1$ and choose any $(s^0_n)_{n\in\I N}$ that satisfies (4).
Then it is easy to see that properties (2)--(5) are satisfied.

To build $(A_r,\alpha_r)$ we use \cref{lm:main technical} with $(\mathcal{A}^r_{k})_{k\in \I N}$, $(s^r_n)_{n\in\I N}$, ${\bf k}=k_{r+1}$, ${\bf m}=r+1$, ${\bf p}=(0,\dots, 0)\in 2^{\bf m}$ and $(X_{l})_{l\in \I N}$, where 
$$X_l=\varphi^{-1}_r(A_{r+1,l})$$
for every $l\in \I N$.
Now, \cref{lm:main technical} gives $(A_{r},\alpha_r)$ that satisfies \cref{lm:main technical}~(1)--(4).

It is easy to see that Properties (1) and (3) hold when we put $\varphi_{r+1}=\varphi_r\circ \widetilde{e}_{(A_{r},\alpha_r)}$.
Let $(s^{r+1}_n)_{n\in \I N}$ be an enumeration of the set $$\{v\in 2^{<\I N}:\exists m\in \I N \ e_{(A_r,\alpha_r)}(v)=s^r_m\}$$
and $\mathcal{A}^{r+1}_k=\{n\in \I N:\exists s\in 2^n \ |e_{(A_r,\alpha_r)}(s)|\in \mathcal{A}^r_k\}$ for every $k\in \I N$.
It follows from \cref{lm:main technical}~(3) that Properties (2) and (4) hold.
Let $s\in 2^{n}$ be such that $n\in \mathcal{A}^{r+1}_k$ for some $k\in \I N$.
Then by the definition we have $|e_{(A_r,\alpha_r)}(s)|\in \mathcal{A}^r_k$ and $\varphi_r$ is a homomorphism from $\I G_{e_{(A_r,\alpha_r)}(s)}$ to $R_{k,k+1}$ by (5) of the inductive assumption.
This gives that $\varphi_{r+1}$ is a homomorphism from $\I G_s$ to $R_{k,k+1}$ because $\widetilde{e}_{(A_r,\alpha_r)}$ is a reduction from $\I G_s$ to $\I G_{e_{(A_r,\alpha_r)}(s)}$.
This shows (5).
By \cref{lm:main technical}~(4) we have that $|e_{(A_r,\alpha_r)}({\bf p})|\in \mathcal{A}^r_{\bf k}=\mathcal{A}^r_{k_{r+1}}$.
This implies that $r+1\in \mathcal{A}^{r+1}_{k_{r+1}}$ and, by (5), $\varphi_{r+1}$ is a homomorphism from $\I G_{s}$ to $R_{k_{r+1},k_{r+1}+1}$ for every $s\in 2^{r+1}$.
This proves (6).

By \cref{lm:main technical}~(2) we have that for every $s\in 2^{r+1}$ there is $l(s)\in \I N$ such that $s^\frown 2^\I N\subseteq (\widetilde{e}^-1_{(A_r,\alpha_r)})(X_{l(s)})$.
Consequently,
$$\varphi_{r+1}(s^\frown 2^\I N)\subseteq A_{r+1,l(s)}$$
and there is $m(s)=m(r+1,l(s))\in \I N$ such that $\varphi_{r+1}(s^\frown 2^\I N)$ is $\mathcal{H}_{r+1,m(s)}$-independent.
Define
$$m(r+1)=\max_{s\in 2^{r+1}} m(s).$$
Then it is easy to see that $\varphi_{r+1}(s^\frown 2^\I N)$ is $\mathcal{H}_{r+1,m(r+1)}$-independent for every $s\in 2^{r+1}$ and the proof of {\bf (A)} is finished.

{\bf (B)}.
Let $\alpha=(0,0,\dots)\in 2^{\I N}$.
Inductively define an increasing sequence $(k^i)_{i\in \I N}\subseteq \I N$ such that $V_{k^i}\subseteq \Delta_{m(i+1)}$ and $k^0\ge 1$.
Put $A=\{n\in \I N:\exists i\in \I N \ k_n=k^i\}$, define $\phi=\varphi\circ \widetilde{e}_{(A,\alpha)}:2^\I N\to X$ and $i_{|s|}\in \I N$ such that $k_{|e_{(A,\alpha)}(s)|}=k^{i_{|s|}}$ for every $s\in 2^{<\I N}$.

We have that $\varphi$ and, consequently, $\phi$ are homomorphisms from $\I G_{e_{(A,\alpha)}(s)}$ and $\I G_s$, respecetively, to $R_{k_{|e_{(A,\alpha)}(s)|},k_{|e_{(A,\alpha)}(s)|}+1}$ for every $s\in 2^{<\I N}$.
Since we have
$$V_{k^{i_{|s|}+1}}\subseteq V_{k^{i_{|s|}}+1}= V_{k_{|e_{(A,\alpha)}(s)|}+1},$$
we conclude that $\phi$ is a homomorphism from $(\I G_s)_{s\in 2^{<\I N}}$ to $(R_{k^{i_{|s|}},k^{i_{|s|}+1}})_{s\in 2^{<\I N}}$.

Let $n_0(i)$ be the minimal $n\in \I N$ such that $i_{|s|}=i$ for some, or equivalently any, $s\in 2^n$.
Let $s\in 2^{n_0(i)}$.
Then there is unique $v\in 2^{i+1}$ such that $v\sqsubseteq e_{(A,\alpha)}(s)$. 
This is because $k_{|e_{(A,\alpha)}(s)|}=k^i$ and therefore $|e_{(A,\alpha)}(s)|\ge k^i>i$ because $(k^i)_{i\in \I N}$ is increasing and $k^0\ge 1$.
Then we have
$$\phi(s^\frown 2^\I N)\subseteq \varphi(v^\frown 2^\I N)$$
and the latter set is $\mathcal{H}_{i+1,m(i+1)}$-independent.
This implies trivially that $\phi(s^\frown 2^\I N)$ is $\mathcal{H}_{i,m(i+1)}$-independent for every $s\in 2^{n_0(i)}$.
Passing to a subsequence $(V_{k^i})_{i\in \I N}$ and setting $m(i):=m(i+1)$ then work as required.
The proof is finished.
\end{proof}

\end{document}